\newtheorem{theorem}{Theorem}[section]
\newtheorem{lm}[theorem]{Lemma}
\newtheorem{cor}[theorem]{Corollary}
\newtheorem{rem}[theorem]{Remark}
\newtheorem{pr}[theorem]{Proposition}
\newtheorem{example}[theorem]{Example}
\begin{document}

\title{On dimension theory of supermodules, super-rings and superschemes}
\author{}
\address{}
\email{}
\author{A. N. Zubkov}
\address{Department of Mathematical Science, UAEU, Al-Ain, United Arab Emirates; Sobolev Institute of Mathematics, Omsk Branch, Pevtzova 13, 644043 Omsk, Russia}
\email{a.zubkov@yahoo.com}
\author{P.S.Kolesnikov}
\address{Sobolev Institute of Mathematics, Akad. Koptyug prosp., 4, 630090 Novosibirsk, Russia}
\email{pavelsk@math.nsc.ru}
\begin{abstract}
We introduce the notion of Krull super-dimension of supermodules over certain super-commutative Noetherian super-rings.
We investigate how this notion relates to the notion of odd regular sequence from \cite{sm} and how it behaves with respect to the transition to the graded and bigraded supermodules and super-rings associated with the original ones. 
We also apply these results to the super-dimension theory of superschemes of finite type and their morphisms. 	
\end{abstract}
\maketitle

\section*{introduction}

The notion of Krull dimension plays crucial role in the theory of commutative rings and in the algebraic geometry. Recall that the Krull dimension $\mathrm{Kdim}(R)$ of a commutative ring $R$ is the supremum of heights of the prime ideals of $R$. If $\mathrm{Kdim}(R)<\infty$,
then $\mathrm{Kdim}(R)$ is just the largest nonnegative integer $n$ such that there is a chain $\mathfrak{p}_0\subseteq \ldots \subseteq\mathfrak{p}_n$ of pairwise different prime ideals. Geometrically, $\mathrm{Kdim}(R)$ is nothing else but the dimension of the affine scheme $\mathrm{Spec}(R)$ (cf. \cite{hart}, Chapter II, Example 3.2.7).  

In what follows all super-rings are supposed to be super-commutative, unless stated otherwise. If $R$ is a super-ring, then each prime superideal $\mathfrak{P}$ of $R$ has a form $\mathfrak{p}\oplus R_1$, where $\mathfrak{p}=\mathfrak{P}_0$ is a prime ideal of the ring $R_0$. In other words, the prime spectrum of $R$ coincides with the prime spectrum of $R_0$, and it does not detect the \emph{odd dimension}
of the affine superscheme $\mathrm{SSpec}(R)$. Therefore, a relevant notion of Krull super-dimension of a super-ring has to include an \emph{odd component} also. Such a notion has been recently suggested in \cite{zubmas}. 
Let $R$ be a super-ring with $\mathrm{Kdim}(R_0)$ to be finite. Then a collection of odd elements $y_1, \ldots, y_s$ is called a \emph{system of odd parameters} of $R$ if 
\[\mathrm{Kdim}(R_0/\mathrm{Ann}_{R_0}(y_1\ldots y_s))=\mathrm{Kdim}(R_0).\]
In other words, the elements $y_1, \ldots, y_s$ form a system of odd parameters of $R$ if there is a longest prime chain
$\mathfrak{p}_0\subseteq \ldots \subseteq\mathfrak{p}_n$ in $R$ such that $\mathrm{Ann}_{R_0}(y_1\ldots y_s)\subseteq\mathfrak{p}_0$. Finally, a \emph{Krull super-dimension} $\mathrm{Ksdim}(R)$ of $R$ is a couple $n\mid l$, where $n=\mathrm{Kdim}(R_0)$ and $l$ is the supremum of lengths of all systems of odd parameters of $R$. We also use the notations 
$\mathrm{Ksdim}_0(R)$ and $\mathrm{Ksdim}_1(R)$ for $n$ and $l$ respectively. 

In general, the odd super-dimensin $l$ can be infinite. But if $R$ is Noetherian, then $l$ is always finite.
 
The purpose of this work is to define the notion of super-dimension of a supermodule over a super-ring, based on the above notion of Krull super-dimension,  and investigate its properties and several applications. 

The article is organized as follows. In the first section we describe Artinian super-rings. In the second section the notion of super-dimension
of supermodules over certain Noetherian super-rings is introduced and some its elementary properties are studied. We also discuss how the odd parameters relate to odd regular elements in the sense of \cite{sm}. Indeed, one can easily see that any system of odd $M$-regular elements
of a supermodule $M$ form a system of odd parameters of $M$. It was incorrectly stated in Lemma 4.2(3), \cite{zubmas}, that any system of odd regular elements of a super-ring $R$ can be included into a longest system of odd parameters. In the second and third sections we construct
two counterexamples for this statement in the categories of supermodules and super-rings respectively. Moreover, in the category of super-rings we develop a fragment of a \emph{superized Hochschild cohomology} theory, with the help of which the second counterexample is constructed. 

Let $R$ be a (not necessary Noetherian) super-ring and $I$ be a superideal of $R$. We associate with $R$ a graded super-ring
\[\mathsf{gr}_I(R)=\oplus_{k\geq 0} I^k/I^{k+1}\] and a bigraded super-ring 
\[\mathsf{bgr}_I(R)=\oplus_{k, l\geq 0} I_0^k I_1^l R/(I_0^{k+1} I_1^l R +I_0^k I_1^{l+1} R).\] These super-rings are Noetherian, provided $R$ is. If $\mathrm{Ksdim}_0(R)<\infty$, then both $\mathsf{gr}_I(R)$ and $\mathsf{bgr}_I(R)$ satisfy this condition as well. 

Similarly, with any $R$-supermodule $M$ we associate a graded $\mathsf{gr}_I(R)$-supermodule 
\[\mathsf{gr}_I(M)=\oplus_{k\geq 0} I^k M/I^{k+1}M\] and a bigraded $\mathsf{bgr}_I(R)$-supermodule
\[\mathsf{bgr}_I(M)=\oplus_{k, l\geq 0}I_0^k I_1^l M/(I_0^{k+1} I_1^l M +I_0^k I_1^{l+1} M).\] 

Let $R$ be a Noetherian super-ring with $\mathrm{Ksdim}_0(R)<\infty$ and $I$ be a superideal of $R$ that is contained in the radical of $R$. In the fifth section we prove that for any finitely generated $R$-supermodule $M$ there hold $\mathrm{Ksdim}_0(M)=\mathrm{Ksdim}_0(\mathsf{gr}_I(M))$ and
$\mathrm{Ksdim}_1(M)\geq \mathrm{Ksdim}_1(\mathsf{gr}_I(M))$. 

The latter  inequality can be strict in general. However, if $I=I_R=RR_1$, then $\mathrm{Ksdim}(M)=\mathrm{Ksdim}(\mathsf{gr}_I(M))$. In particular, if the elements \[z_1, \ldots , z_t\in I_R/I_R^2\simeq R_1/R_1^3\] form a longest system of odd parameters of the supermodule $\mathsf{gr}_{I_R}(M)$, then their representatives $y_1, \ldots , y_t\in R$ form a longest system of odd parameters of $M$. 

In the six section we develop the dimension theory of certain local super-rings. More precisely, let $R$ be a local super-ring with the maximal superideal $\mathfrak{M}$. Assume that $R$ contains a coefficient field
$K\simeq R_0/\mathfrak{m}$. Let $\mathfrak{N}$ be a $\mathfrak{M}$-primary superideal of $R$ such that $(\mathfrak{n}+R_1^2)/R_1^2$ is generated by the parameters of $\overline{R}=R/I_R\simeq R_0/R_1^2$.  Then $\mathrm{Ksdim}_0(\mathsf{bgr}_{\mathfrak{N}}(R))=\mathrm{Ksdim}_0(R)$ but $\mathrm{Ksdim}_1(\mathsf{bgr}_{\mathfrak{N}}(R))\leq \mathrm{Ksdim}_1(R)$.  However, if $\mathfrak{N}$ has a form $\mathfrak{n}\oplus R_1$, then $\mathrm{Ksdim}(\mathsf{bgr}_{\mathfrak{N}}(R))=\mathrm{Ksdim}(R)$. 

Let $B$ denote $\mathsf{bgr}_{\mathfrak{N}}(R)$ and for each couple of nonnegative integers $k, l$ let $B(k, l)$ denote the corresponding homogeneous component of the bigraded super-ring $B$. We can associate with $B$ a \emph{Hilbert polynomial} $g_{\mathfrak{N}}(x, y)=\sum_{l\geq 0} g_l(x)y^l$ such that 
\[\sum_{0\leq t\leq k}\dim B(t, l) = g_l(k)\]
for all sufficiently large $k$ and for any $l\geq 0$. Then the degree of each polynomial $g_l(x)$ is at most $d=\mathrm{Ksdim}_0(R)$ and if
$\mathfrak{N}_1=R_1$, then
\[\mathrm{Ksdim}_1(R)=\max\{l\mid \ \mbox{the degree of} \ g_l(x) \ \mbox{is equal to} \ d\}.\]

In the fifth and seventh sections we discuss some applications of the above results for the dimension theory of superschemes. For example, we prove that for any flat morphism $f : X\to Y$ of irreducible superschemes of finite type and for any nonsingular point $y\in Y^e$, the odd dimension of each irreducible  component of the fiber $X_y$ is at most $\mathrm{sdim}_1(X)-\mathrm{sdim}_1(Y)$, contrary to the well known fact that its even dimension is equal to $\mathrm{sdim}_0(X)-\mathrm{sdim}_0(Y)$. Moreover, this inequality can be strict in general.   

\section{Artinian super-rings}

As in the classical setting, a super-ring $R$ is called \emph{Artinian} if $R$ satisfies the descending chain condition for super-ideals (DCC).
\begin{lm}\label{first_characterization}
A super-ring $R$ is Artinian if and only if $R_0$ is an Artinian ring and $R_0$-module $R_1$ is Artinian as well.
\end{lm}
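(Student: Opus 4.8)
The plan is to go back and forth between descending chains of superideals of $R$ and pairs consisting of a descending chain of ideals of $R_0$ together with a descending chain of $R_0$-submodules of $R_1$, using throughout the fact that every superideal $I$ of $R$ is homogeneous, i.e. $I=(I\cap R_0)\oplus(I\cap R_1)$, where $I\cap R_0$ is an ideal of $R_0$ and $I\cap R_1$ is an $R_0$-submodule of $R_1$.

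For the implication ``$\Leftarrow$'' I would take an arbitrary descending chain of superideals $I^{(0)}\supseteq I^{(1)}\supseteq\cdots$. Intersecting with $R_0$ yields a descending chain of ideals of the Artinian ring $R_0$, and intersecting with $R_1$ yields a descending chain of submodules of the Artinian $R_0$-module $R_1$; both stabilize, and once both have stabilized the original chain is constant by homogeneity. Hence $R$ satisfies the DCC on superideals. This direction requires no new idea beyond homogeneity.

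For ``$\Rightarrow$'' I would assume $R$ is Artinian and show that a strictly descending chain in $R_0$, or in $R_1$, would produce one among superideals. Given ideals $\mathfrak{a}_0\supsetneq\mathfrak{a}_1\supsetneq\cdots$ of $R_0$, I would inflate them to the superideals $\mathfrak{a}_iR=\mathfrak{a}_i\oplus\mathfrak{a}_iR_1$; the key point is that $\mathfrak{a}_iR\cap R_0=\mathfrak{a}_i$, so $\mathfrak{a}\mapsto\mathfrak{a}R$ is order-preserving and injective, the inflated chain stays strict, and DCC in $R$ is contradicted. Thus $R_0$ is Artinian. Given $R_0$-submodules $N_0\supsetneq N_1\supsetneq\cdots$ of $R_1$, I would inflate them to $RN_i=N_i\oplus R_1N_i$; here one checks that $R_1N_i$ is an ideal of $R_0$, that $N_i\oplus R_1N_i$ is genuinely closed under multiplication by $R_1$ — this uses associativity, since $R_1(R_1N_i)=(R_1R_1)N_i\subseteq R_0N_i=N_i$ — and that $RN_i\cap R_1=N_i$, so again $N\mapsto RN$ is order-preserving and injective and the chain stays strict. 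Thus $R_1$ is Artinian as an $R_0$-module.

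The only place where there is anything nontrivial to do is this last construction: from an arbitrary $R_0$-submodule $N\subseteq R_1$ one must manufacture a superideal whose odd part is exactly $N$. The naive candidate $0\oplus N$ fails to be a superideal, because $R_1N$ need not vanish; this forces the slightly larger $N\oplus R_1N$ together with the short verification that it is indeed a superideal. Everything else is routine bookkeeping with the decomposition $R=R_0\oplus R_1$ and the observation that the DCC on superideals is equivalent to the DCC holding jointly for the even ideals and the odd submodules occurring as homogeneous components.
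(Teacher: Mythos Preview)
Your argument is correct and is exactly the standard one: the paper itself gives no independent proof here but simply points to Lemma~1.4 of \cite{zubmas} (the Noetherian analogue), whose proof is precisely the homogeneity-plus-inflation argument you wrote out, with ascending chains replaced by descending ones. There is nothing to add.
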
 
\begin{proof}
Use the same arguments as in Lemma 1.4, \cite{zubmas}.
\end{proof}
The following proposition superizes Proposition (2.C), \cite{mats}.
\begin{pr}\label{characterization_of_Artinian}
A super-ring $R$ is Artinian if and only if it has finite length as $R$-supermodule.
\end{pr}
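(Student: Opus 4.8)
The plan is to prove the two implications separately, reducing everything to the classical statement (Proposition (2.C) of \cite{mats}) for the even ring $R_0$ together with Lemma \ref{first_characterization}. The implication ``finite length $\Rightarrow$ Artinian'' is immediate: if $R$ has a finite composition series as an $R$-supermodule, then it satisfies both the descending and the ascending chain condition on sub-$R$-supermodules; since the sub-$R$-supermodules of $R$ are exactly its superideals, in particular the DCC on superideals holds, i.e.\ $R$ is Artinian.

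For the converse, assume $R$ is Artinian. By Lemma \ref{first_characterization}, $R_0$ is an Artinian ring and $R_1$ is an Artinian $R_0$-module. The classical Proposition (2.C) of \cite{mats}, applied to $R_0$, gives that $R_0$ has finite length as an $R_0$-module. The point to settle is that $R_1$ also has finite length as an $R_0$-module. For this I would use that the Jacobson radical $\mathfrak{J}$ of the commutative Artinian ring $R_0$ is nilpotent, say $\mathfrak{J}^m=0$, and consider the finite filtration
\[
R_1=\mathfrak{J}^0R_1\supseteq \mathfrak{J}R_1\supseteq\cdots\supseteq \mathfrak{J}^mR_1=0 .
\]
Each factor $\mathfrak{J}^iR_1/\mathfrak{J}^{i+1}R_1$ is a module over the semisimple ring $R_0/\mathfrak{J}$ (a finite product of fields, since $R_0$ is commutative Artinian) and is Artinian, being a subquotient of the Artinian module $R_1$; hence it is a finite direct sum of simple modules, and so of finite length. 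Adding up the lengths of the factors, $R_1$ has finite length over $R_0$.

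Consequently $R=R_0\oplus R_1$ has finite length as an $R_0$-module, hence satisfies both chain conditions on $R_0$-submodules, and a fortiori on the subfamily consisting of its superideals, that is, on sub-$R$-supermodules. By the Jordan--H\"older theorem in the category of $R$-supermodules, $R$ then admits a finite composition series, i.e.\ it has finite length as an $R$-supermodule, as required.

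I expect the only genuinely non-formal step to be the finite-length statement for $R_1$: knowing merely that $R_1$ is Artinian over some ring would not suffice, since an Artinian module over a Noetherian ring can fail to have finite length, so one really must exploit that $R_0$ itself is Artinian --- which is precisely what Lemma \ref{first_characterization} supplies. The remainder is a routine transcription of the classical argument to the $\mathbb{Z}_2$-graded setting, with the usual minor bookkeeping for the parity grading.
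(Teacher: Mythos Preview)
Your proof is correct. Both you and the paper invoke Lemma \ref{first_characterization} to obtain that $R_0$ is Artinian, then exploit the nilpotency of the Jacobson radical to build a finite filtration with semisimple quotients; the ``if'' direction is handled identically.

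The difference is in how the filtration is organized. The paper works directly with superideals of $R$: it takes the product $I=\mathfrak{M}_1\cdots\mathfrak{M}_t$ of the maximal superideals, argues separately that $I_R=RR_1$ is nilpotent (so that $I$ is nilpotent), and refines the chain $R\supseteq I\supseteq I^2\supseteq\cdots\supseteq 0$ so that each quotient is a vector space over some residue field $R/\mathfrak{M}_i$. You instead split $R=R_0\oplus R_1$ as $R_0$-modules and treat the two summands separately: $R_0$ has finite length by the classical result, and for $R_1$ you filter by powers of $\mathfrak{J}=\mathsf{rad}(R_0)$ and use that each quotient is an Artinian module over the semisimple ring $R_0/\mathfrak{J}$. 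Your route is marginally cleaner in that it never needs to discuss the nilpotency of $R_1^2$ or of $I_R$ explicitly---that information is already packaged into ``$R_1$ is an Artinian $R_0$-module'' from Lemma \ref{first_characterization}. The paper's route, on the other hand, produces an explicit composition-type series by \emph{superideals}, which is conceptually closer to the statement being proved. Both arguments are short and essentially equivalent.
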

\begin{proof}
The part "if" is obvious.

Assume that $R$ is Artinian. By Lemma \ref{first_characterization} $R_0$ is Artinian, hence $R_0$ (and $R$ as well) has only finite number of maximal ideals, say $\mathfrak{m}_1, \ldots, \mathfrak{m}_t$ (correspondingly, $\mathfrak{M}_1=\mathfrak{m}_1\oplus R_1, \ldots, \mathfrak{M}_t=\mathfrak{m}_t\oplus R_1$). Moreover, 
$\mathsf{rad}(R_0)=\cap_{1\leq i\leq t}\mathfrak{m}_i=\mathfrak{m}_1\ldots\mathfrak{m}_t$ is nilpotent
 (cf. Proposition (2.C), \cite{mats}). On the other hand, the ideal $R_1^2$ is locally nilpotent, hence $R_1^2\subseteq\mathsf{rad}(R_0)$ and $R_1^2$ is also nilpotent, say $R_1^{2k}=0$ for sufficiently large $k$. 
 
Set $I=\mathfrak{M}_1\ldots\mathfrak{M}_t$. Since $I\subseteq\mathsf{rad}(R_0)+I_R$ and $I_R^{2k}=0$, we have
$I^{2k+l}=0$, provided $\mathsf{rad}(R_0)^l=0$. Set $N=2k+l$. Now, one can construct a chain of super-ideals
\[R\supseteq \mathfrak{M}_1\supseteq\mathfrak{M}_1\mathfrak{M}_2\supseteq\ldots\supseteq\mathfrak{M}_1\ldots\mathfrak{M}_{t-1}\supseteq I\supseteq\] 
\[I\mathfrak{M}_1\supseteq I\mathfrak{M}_1\mathfrak{M}_2\supseteq\ldots\supseteq I\mathfrak{M}_1\ldots\mathfrak{M}_{t-1}\supseteq I^2\supseteq\ldots\supseteq I^N=0,\]
where each quotient of the chain is a vector (super)space over a field $R/\mathfrak{M}_i$ for some $i$, hence a finite dimensional vector (super)space. Proposition is proven.
\end{proof}
\begin{cor}\label{third_characterization}
A super-ring $R$ is Artinian if and only if $R$ is a Noetherian super-ring of Krull super-dimension $0|n$ for some non-negative integer $n$.
\end{cor}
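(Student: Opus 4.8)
The plan is to reduce the statement to three ingredients: the characterization of Artinian super-rings given by Lemma~\ref{first_characterization}, its Noetherian counterpart (a super-ring $R$ is Noetherian if and only if $R_0$ is a Noetherian ring and $R_1$ is a Noetherian, equivalently finitely generated, $R_0$-module, proved verbatim as Lemma~\ref{first_characterization}), and the classical commutative fact that a commutative ring is Artinian if and only if it is Noetherian of Krull dimension $0$ (for which one uses that the radical of an Artinian ring is nilpotent, so that finitely generated modules over it have finite length).

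For the implication "Artinian $\Rightarrow$ Noetherian of super-dimension $0\mid n$", assume $R$ is Artinian. By Proposition~\ref{characterization_of_Artinian}, $R$ has finite length as an $R$-supermodule, hence is a Noetherian $R$-supermodule, hence a Noetherian super-ring. By Lemma~\ref{first_characterization}, $R_0$ is an Artinian ring, so $\mathrm{Kdim}(R_0)=0$, i.e.\ $\mathrm{Ksdim}_0(R)=0$. Since $R$ is Noetherian, $\mathrm{Ksdim}_1(R)$ is finite, as recalled in the introduction; alternatively one sees this directly, since $R_0$ Artinian forces $\mathsf{rad}(R_0)$, and therefore the locally nilpotent ideal $R_1^2\subseteq \mathsf{rad}(R_0)$, to be nilpotent, say $R_1^{2k}=0$, whence any product $y_1\cdots y_s$ of more than $2k-1$ odd elements lies in $R_1^{2k}=0$ and so cannot belong to a system of odd parameters. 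Setting $n=\mathrm{Ksdim}_1(R)$ we get $\mathrm{Ksdim}(R)=0\mid n$.

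For the converse, assume $R$ is a Noetherian super-ring with $\mathrm{Ksdim}(R)=0\mid n$. Then $R_0$ is a Noetherian ring with $\mathrm{Kdim}(R_0)=\mathrm{Ksdim}_0(R)=0$, so $R_0$ is Artinian by the classical result. Moreover $R_1$ is a finitely generated module over the Artinian ring $R_0$, hence has finite length, hence is an Artinian $R_0$-module. Lemma~\ref{first_characterization} then yields that $R$ is Artinian. The only mildly delicate points — and the ones I would state carefully — are the two standard commutative facts invoked (Noetherian plus Krull dimension zero implies Artinian, and finitely generated over an Artinian ring implies finite length), both resting on nilpotence of $\mathsf{rad}(R_0)$; everything else is a direct combination of Lemma~\ref{first_characterization}, Proposition~\ref{characterization_of_Artinian}, and the finiteness of $\mathrm{Ksdim}_1$ for Noetherian super-rings.
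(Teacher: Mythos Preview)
Your proof is correct and follows essentially the same approach as the paper: both directions reduce to the classical commutative fact that a ring is Artinian if and only if it is Noetherian of Krull dimension zero, combined with the structural lemmas relating $R$ to $R_0$ and $R_1$. The only cosmetic difference is in the converse: the paper invokes nilpotence of $I_R$ and repeats the composition-series argument of Proposition~\ref{characterization_of_Artinian}, whereas you go directly through Lemma~\ref{first_characterization} by observing that $R_1$, being finitely generated over the Artinian ring $R_0$, is an Artinian $R_0$-module --- a slightly more streamlined route to the same conclusion.
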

\begin{proof}
If $R$ is Artinian, then Proposition \ref{characterization_of_Artinian} obviously implies that $R$ is Noetherian. Moreover, by Corollary (2.C), \cite{mats}, $\mathrm{Ksdim}_0(R)=\mathrm{Kdim}(R_0)=0$.
Conversely, the same corollary and Lemma 1.4, \cite{zubmas}, infer that if $R$ is Noetherian and $\mathrm{Kdim}(R_0)=0$, then $R_0$ is Artinian and $I_R$ is nilpotent. Repeating the arguments of Proposition \ref{characterization_of_Artinian}, we complete the proof. 
\end{proof}

\section{Super-dimension of supermodules}

Let $B$ be a commutative ring and let $N$ be a $B$-module. Recall that the \emph{dimension} of $N$ is defined by $\dim(N)=\mathrm{Kdim}(B/\mathrm{Ann}_B(N))$.

From now on we assume that $R$ is a Noetherian super-ring with $\mathrm{Kdim}(R_0)<\infty$, unless stated otherwise. For any set of odd elements $y_1, \ldots, y_s\in R_1$ and any subset $L\subseteq\underline{s}=\{1, 2, \ldots, s  \}$, let $y^L$ denote the product
$\prod_{i\in L}y_i$. 

If $M$ is an $R$-supermodule, then $\dim(M)$ is always regarded as the dimension of
$M$ as the $R_0$-module.  
Further, the \emph{super-dimension} of $M$ is defined as
\[\mathrm{sdim}(M)=\mathrm{Ksdim}(R/\mathrm{Ann}_R(M)).\]
More precisely, $\mathrm{sdim}(M)$ is a vector with two coordinates $\mathrm{sdim}_0(M)$ and $\mathrm{sdim}_1(M)$, where $\mathrm{sdim}_0(M)=\dim(M)$. In its turn, $\mathrm{sdim}_1(M)$ is equal to the largest positive integer $l$ such that there are the elements $y_1, \ldots, y_l\in R_1$, which satisfy 
\[(\star) \ \dim(y^{\underline{l}}M))=\dim(M),\]
otherwise $\mathrm{sdim}_1(M)=0$. 

In what follows, any collection of odd elements $y_1, \ldots, y_l$, which satisfies the condition $(\star)$, is called a \emph{system of odd parameters} of $M$. In other words, the elements $y_1, \ldots , y_l\in R_1$ form a system of odd parameters of $M$ if and only if there is a prime chain $\mathfrak{p}_0\subseteq\ldots\subseteq\mathfrak{p}_t$ in $R_0$, such that $\mathrm{Ann}_{R_0}(y^{\underline{l}}M)\subseteq\mathfrak{p}_0$ and 
\[\mathfrak{p}_0/\mathrm{Ann}_{R_0}(M)\subseteq\ldots\subseteq\mathfrak{p}_t/\mathrm{Ann}_{R_0}(M)\]
is a longest prime chain in $R_0/\mathrm{Ann}_{R_0}(M)$. We say that the above system of odd parameters subordinates the prime ideal $\mathfrak{p}_0$.

For example, it is obvious that the Krull super-dimension of a super-ring $R$ coincides with the super-dimension of $R$, regarded as a left $R$-supermodule. Moreover, any system of odd parameters of the super-ring $R$ is the system of odd parameters of the left $R$-supermodule $R$ and vice versa.
\begin{pr}\label{extension_of_prop}
Choose a system of generators of $R_0$-module $R_1$, say $y_1, \ldots, y_d$.
Then the following conditions are equivalent :
\begin{enumerate}
\item There is a system of odd parameters of $M$ of cardinality $l\geq 1$;
\item For some $1\leq i_1<\ldots <i_l\leq d$ the elements $y_{i_1}, \ldots , y_{i_l}$ form a system of odd parameters of $M$;
\item $\dim(R_1^l M)=\dim (M)$.
\end{enumerate}
In particular, we have 
\[\mathrm{sdim}_1(M)=\max\{l\mid \dim(R_1^l M)=\dim (M)\}.\]
\end{pr}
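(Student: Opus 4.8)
The plan is to prove the cycle of implications $(2)\Rightarrow(1)\Rightarrow(3)\Rightarrow(2)$, which then gives the displayed formula for $\mathrm{sdim}_1(M)$ for free, since $(2)$ applied to the fixed generating set shows that whenever some system of odd parameters of cardinality $l$ exists, one can be found among products of the $y_i$, and $(3)$ identifies the existence of such a system with the numerical condition $\dim(R_1^lM)=\dim(M)$. The implication $(2)\Rightarrow(1)$ is trivial. The heart of the matter is $(1)\Rightarrow(3)$ and $(3)\Rightarrow(2)$.

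For $(1)\Rightarrow(3)$: suppose $z_1,\dots,z_l\in R_1$ is a system of odd parameters of $M$, so $\dim(z_1\cdots z_l M)=\dim(M)$. Since $z_1\cdots z_l M\subseteq R_1^lM\subseteq M$ and dimension of $R_0$-modules is monotone under inclusion, we immediately get $\dim(M)=\dim(z_1\cdots z_l M)\le\dim(R_1^lM)\le\dim(M)$, forcing equality. So this direction is in fact immediate and needs only the monotonicity of $\dim$ together with $\mathrm{Ann}_{R_0}(M)\subseteq\mathrm{Ann}_{R_0}(R_1^lM)\subseteq\mathrm{Ann}_{R_0}(z_1\cdots z_lM)$.

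The substantive step is $(3)\Rightarrow(2)$. Here I would argue as follows. Expand $R_1^l$ in terms of the chosen generators: $R_1^lM=\sum y^{L}M$ where $L$ ranges over multisets of size $l$ from $\underline{d}$; but since the $y_i$ are odd, $y_i^2\in R_0$ acts as a scalar and one reduces to genuine subsets $L\subseteq\underline{d}$ of size $\le l$, so $R_1^lM=\sum_{|L|=l}y^LM$ (up to absorbing lower-degree even contributions, which do not help increase the dimension). Thus $R_1^lM$ is a finite sum of the submodules $y^LM$ with $|L|=l$. Now invoke the standard fact that for a finite sum of submodules $N=\sum_j N_j$ one has $\dim(N)=\max_j\dim(N_j)$ — this follows because $\mathrm{Ann}_{R_0}(N)=\bigcap_j\mathrm{Ann}_{R_0}(N_j)$ and a prime containing an intersection contains one of the factors, so $V(\mathrm{Ann}(N))=\bigcup_j V(\mathrm{Ann}(N_j))$ and Krull dimension of a finite union of closed sets is the max. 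Applying this with $N=R_1^lM$ gives some $L=\{i_1<\dots<i_l\}$ with $\dim(y^LM)=\dim(R_1^lM)=\dim(M)$, i.e. $y_{i_1},\dots,y_{i_l}$ is a system of odd parameters of $M$, which is exactly $(2)$.

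The main obstacle I anticipate is the bookkeeping in the reduction $R_1^lM=\sum_{|L|=l}y^LM$: one must be careful that products $y_{i_1}\cdots y_{i_r}$ with a repeated index, or with an even coefficient from $R_0$, contribute submodules contained in some $y^{L}M$ with $|L|\le l$, and then argue that a system of odd parameters of cardinality $<l$ already forces $\dim(M)=\dim(R_1^{l'}M)$ for all $l'\le l$ (monotonicity again), so that passing to a sub-collection or super-collection of the $y_i$'s of the exact size $l$ is harmless — the condition $(\star)$ is monotone in $l$ in the sense that if it holds for size $l$ it holds for every smaller size, and conversely adding more generators to a system of odd parameters keeps it one as long as the product's annihilator does not grow past $\mathfrak p_0$. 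Once this monotonicity is cleanly stated, the rest is the elementary commutative-algebra fact about dimensions of finite sums and unions of closed subsets of $\mathrm{Spec}(R_0/\mathrm{Ann}_{R_0}(M))$.
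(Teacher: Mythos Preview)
Your argument is correct and follows the natural route; the paper itself gives no proof, deferring to Proposition 4.1 of \cite{zubmas}, which presumably proceeds the same way.

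One point to clean up: in the super-commutative setting of this paper, odd elements square to zero (this is the standing convention, as reflected for instance in the notation $y^L$ for $L\subseteq\underline{s}$ a \emph{subset}, and in the explicit condition $\pi(a,a)=0$ for odd $a$ in Section~3). Thus any product $y_{j_1}\cdots y_{j_l}$ with a repeated index vanishes outright, and the identity $R_1^lM=\sum_{|L|=l}y^LM$ holds on the nose, with no ``lower-degree even contributions'' to absorb. Your anticipated obstacle therefore does not arise, and the paragraph about monotonicity and passing to sub- or super-collections can simply be dropped. With that simplification, $(3)\Rightarrow(2)$ reduces cleanly to the identity $\mathrm{Ann}_{R_0}\bigl(\sum_j N_j\bigr)=\bigcap_j\mathrm{Ann}_{R_0}(N_j)$ together with the fact that the Krull dimension of a finite union of closed subsets of $\mathrm{Spec}(R_0)$ is the maximum of the individual dimensions.
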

\begin{proof}
It is obvious modification of the proof of Proposition 4.1, \cite{zubmas}.
\end{proof}
\begin{lm}\label{supermodules of finite length}
If $M$ is a finitely generated $R$-supermodule, then the following conditions are equivalent :
\begin{enumerate}
\item $M$ is a supermodule of finite length.
\item $R/\mathrm{Ann}_R(M)$ is an Artinian super-ring.
\item $\mathrm{sdim}(M)=0|n$ for some nonnegative integer $n$.
\item $\dim(M)=0$.
\end{enumerate}
\end{lm}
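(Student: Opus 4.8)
The plan is to establish $(3)\Leftrightarrow(4)$ directly and then run the cycle $(4)\Rightarrow(2)\Rightarrow(1)\Rightarrow(4)$. The observation used throughout is that $A:=R/\mathrm{Ann}_R(M)$ is a Noetherian super-ring whose even part is $A_0=R_0/\mathrm{Ann}_{R_0}(M)$, since $\mathrm{Ann}_R(M)\cap R_0=\mathrm{Ann}_{R_0}(M)$; hence $M$ is a faithful finitely generated $A$-supermodule and $\dim(M)=\mathrm{Kdim}(A_0)=\mathrm{Ksdim}_0(A)$. The equivalence $(3)\Leftrightarrow(4)$ is then essentially formal: by definition $\mathrm{sdim}_0(M)=\dim(M)$, while $\mathrm{sdim}_1(M)$ is automatically a nonnegative integer under our standing hypotheses, because if $y_1,\dots,y_d$ generate the $R_0$-module $R_1$ then $R_1^{d+1}=0$ (a product of $d+1$ of the $y_i$ repeats a factor, and odd elements square to zero), so $\mathrm{sdim}_1(M)\le d$ by Proposition \ref{extension_of_prop}. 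Thus $\mathrm{sdim}(M)=0|n$ for some integer $n\ge 0$ exactly when $\dim(M)=0$.

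For $(4)\Rightarrow(2)$: if $\dim(M)=0$ then $\mathrm{Ksdim}_0(A)=\mathrm{Kdim}(A_0)=0$, and since $A$ is Noetherian, Corollary \ref{third_characterization} gives that $A$ is Artinian. For $(2)\Rightarrow(1)$: by Proposition \ref{characterization_of_Artinian} an Artinian super-ring $A$ has finite length as an $A$-supermodule; since $M$ is finitely generated over $A$, it is a homomorphic image of a finite direct sum of copies of $A$ with possibly shifted parities, hence it has finite length as an $A$-supermodule, and therefore also as an $R$-supermodule, the two lattices of sub-supermodules coinciding.

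It remains to prove $(1)\Rightarrow(4)$, which I expect to be the only step requiring genuine care. Assume $M$ has finite length as an $R$-supermodule and fix a composition series. Each simple factor is cyclic, hence isomorphic, up to a parity shift, to $R/\mathfrak{L}$ for some maximal superideal $\mathfrak{L}$; and every maximal superideal of $R$ has the form $\mathfrak{m}\oplus R_1$ (as recalled in the introduction for prime superideals, or because the superideal $RR_1$ is nil, hence proper, so it lies in every maximal superideal). Therefore each simple factor is $R_0/\mathfrak{m}$ up to a shift, i.e.\ a simple $R_0$-module, and reading the composition series as a chain of $R_0$-modules shows that $M$ has finite length over $R_0$. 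Consequently $\mathrm{Supp}_{R_0}(M)=V(\mathrm{Ann}_{R_0}(M))$ is a finite set of maximal ideals, so every prime of $R_0/\mathrm{Ann}_{R_0}(M)$ is maximal and $\dim(M)=\mathrm{Kdim}(R_0/\mathrm{Ann}_{R_0}(M))=0$. The crux is exactly this passage from supermodules to ordinary $R_0$-modules, which relies on the fact — used here at the level of modules — that the odd part of a super-ring is invisible to its prime and maximal spectrum; the remaining implications are immediate from Corollary \ref{third_characterization} and Proposition \ref{characterization_of_Artinian}.
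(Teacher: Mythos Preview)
Your proof is correct. The logical organization differs slightly from the paper's: the paper derives $(2)\Leftrightarrow(3)$ and $(2)\Leftrightarrow(4)$ directly from Corollary~\ref{third_characterization}, and then closes the loop with $(1)\Leftrightarrow(2)$, whereas you run the cycle $(4)\Rightarrow(2)\Rightarrow(1)\Rightarrow(4)$ together with $(3)\Leftrightarrow(4)$. The only substantive divergence is in how condition $(1)$ is exploited. The paper argues $(1)\Rightarrow(2)$: for homogeneous generators $m_1,\dots,m_t$ each cyclic super-submodule $Rm_i\cong R/\mathrm{Ann}_R(m_i)$ has finite length, so $R/\mathrm{Ann}_R(m_i)$ is Artinian, and $R/\mathrm{Ann}_R(M)$ is a subdirect product of these, hence Artinian. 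You instead argue $(1)\Rightarrow(4)$ by identifying the composition factors as (parity shifts of) $R_0/\mathfrak{m}$ and thereby passing to finite $R_0$-length. Both arguments are short and rest on the same structural fact that maximal superideals have the form $\mathfrak{m}\oplus R_1$; the paper's route avoids inspecting simple supermodules, while yours makes the descent to $R_0$-modules more explicit. Your implications $(4)\Rightarrow(2)$ and $(2)\Rightarrow(1)$ coincide with the paper's reasoning.
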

\begin{proof}
The equivalences $(2)\leftrightarrow (3)$ and $(2)\leftrightarrow (4)$ immediately follow by Corollary \ref{third_characterization}.
If $M$ is an $R$-supermodule of finite length, then any its super-submodule is of finite length. Without loss of a generality, one can assume that $M$ is generated by homogeneous elements $m_1, \ldots , m_t$.
Then $\mathrm{Ann}_R(M)=\cap_{1\leq i\leq t}\mathrm{Ann}_R(m_i)$ and $R/\mathrm{Ann}_R(M)$ is a subdirect product of Artinian super-rings $R/\mathrm{Ann}_R(m_i)$, hence Artinian as well. Thus $(1)$ implies $(2)$. Conversely, there is a natural surjective supermodule morphism $(R/\mathrm{Ann}_R(M))^{\oplus t}\to M$. Therefore, $M$ is an $R$-supermodule of finite length, provided  $R/\mathrm{Ann}_R(M)$ is an Artinian super-ring. Lemma is proven.
\end{proof}
\begin{lm}\label{over_Artinian}
Let $R$ be an Artinian super-ring. Then for any non zero $R$-supermodule $M$ there is
\[\mathrm{sdim}_1(M)=\max\{l\mid R_1^lM\neq 0\}.\]
\end{lm}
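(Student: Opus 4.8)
The plan is to deduce the statement from Proposition~\ref{extension_of_prop} once we observe that over an Artinian base ring every nonzero module is zero-dimensional.

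First I would record the elementary fact underlying everything. Since $R$ is Artinian, Lemma~\ref{first_characterization} (or Corollary~\ref{third_characterization}) gives that $R_0$ is an Artinian ring, hence $\mathrm{Kdim}(R_0/\mathfrak{a})=0$ for every proper ideal $\mathfrak{a}\subsetneq R_0$, while $\mathrm{Kdim}(R_0/R_0)=-\infty$. Applying this with $\mathfrak{a}=\mathrm{Ann}_{R_0}(N)$ for an arbitrary $R_0$-module $N$ yields $\dim(N)=0$ if $N\neq 0$ and $\dim(N)=-\infty$ if $N=0$. In particular $\dim(M)=0$ because $M\neq 0$, and for every integer $l\geq 0$ we obtain the key equivalence: $\dim(R_1^lM)=\dim(M)$ if and only if $R_1^lM\neq 0$.

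Next, by Corollary~\ref{third_characterization} the super-ring $R$ is Noetherian, so the $R_0$-module $R_1$ is finitely generated; fix a system of generators $y_1,\ldots,y_d$ of $R_1$. The ``in particular'' clause of Proposition~\ref{extension_of_prop} then gives $\mathrm{sdim}_1(M)=\max\{l\mid \dim(R_1^lM)=\dim(M)\}$, and combining with the equivalence above the right-hand side is exactly $\max\{l\mid R_1^lM\neq 0\}$, as claimed. This maximum is finite: arguing as in the proof of Proposition~\ref{characterization_of_Artinian}, $R_1^2\subseteq\mathsf{rad}(R_0)$ is nilpotent, so $R_1^{2k}=0$ for $k\gg 0$; and it is at least $0$ since $R_1^0M=M\neq 0$.

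The only point needing a word of care is the convention built into the definition of $\mathrm{sdim}_1$: it is defined as the largest \emph{positive} $l$ admitting a system of odd parameters, and is set to $0$ if no such positive $l$ exists. If $R_1M=0$ then no positive-length system of odd parameters exists, so $\mathrm{sdim}_1(M)=0$, and indeed $\max\{l\mid R_1^lM\neq 0\}=0$; if instead $R_1^lM\neq 0$ for some $l\geq 1$, then the implication $(3)\Rightarrow(2)$ of Proposition~\ref{extension_of_prop} produces an honest system of odd parameters of cardinality $l$, so the ``otherwise'' clause is not invoked and the formula of the lemma correctly reads both sides. I do not anticipate any genuine obstacle here; the content is entirely in Proposition~\ref{extension_of_prop} together with the triviality of Krull dimension over an Artinian ring.
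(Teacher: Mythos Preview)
Your argument is correct and essentially identical to the paper's: both reduce to the observation that over an Artinian $R_0$ one has $\dim(N)=\dim(M)$ iff $N\neq 0$, and then invoke Proposition~\ref{extension_of_prop}. The paper phrases the first step as ``$z_1,\ldots,z_l$ form a system of odd parameters iff $z^{\underline{l}}M\neq 0$'' before appealing to Proposition~\ref{extension_of_prop}, whereas you go straight through the displayed formula in that proposition, but this is only a cosmetic difference.
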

\begin{proof}
By Corollary (2.C), \cite{mats}, $R_0$ (respectively, $R$) has finitely many prime ideals (respectively, super-ideals) all of which are maximal. Since $\dim(M)=0$,  a nonempty collection of odd elements
$z_1, \ldots, z_l$ form a system of odd parameters of $M$ if and only if $\mathrm{Ann}_{R_0}(z^{\underline{l}}M)$ is a proper ideal of $R_0$ if and only if $z^{\underline{l}}M\neq 0$. Proposition \ref{extension_of_prop} infers the statement.
\end{proof}
\begin{lm}\label{on even dimension}
We have the following :
\begin{enumerate}
\item If $I$ is an nilpotent superideal of $R$, then $\mathrm{sdim}_0(M/IM)=\mathrm{sdim}_0(M)$.
\item If a super-submodule $N$ of $M$ contains $yM$, where $y$ belongs to a (not necessary longest) system of odd parameters, then $\mathrm{sdim}_0(N)=\mathrm{sdim}_0(M)$.  
\end{enumerate}
\end{lm}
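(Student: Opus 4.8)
The plan is to reduce both statements to two elementary and classical facts about the function $\dim(P)=\mathrm{Kdim}(R_0/\mathrm{Ann}_{R_0}(P))$ on $R_0$-modules: it depends only on the radical $\sqrt{\mathrm{Ann}_{R_0}(P)}$, and it is monotone under inclusion of $R_0$-submodules, since $P'\subseteq P$ forces $\mathrm{Ann}_{R_0}(P)\subseteq\mathrm{Ann}_{R_0}(P')$. Both $\mathrm{sdim}_0$'s in the statement are computed as such $\dim$'s.

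For part (1), one inequality is free: from $\mathrm{Ann}_{R_0}(M)\subseteq\mathrm{Ann}_{R_0}(M/IM)$ we get $\mathrm{sdim}_0(M/IM)\le\mathrm{sdim}_0(M)$. For the opposite inequality I would show that these two ideals have the same radical. Take $a\in R_0$ with $aM\subseteq IM$; using that $a$ is even and central, hence commutes with all of $R$, I would prove by induction that $a^{k}M\subseteq I^{k}M$ for every $k\ge 1$, the inductive step being $a^{k+1}M\subseteq a(I^{k}M)\subseteq I^{k}(aM)\subseteq I^{k+1}M$. Since $I$ is nilpotent, this gives $a^{n}M=0$ for any $n$ with $I^{n}=0$, i.e.\ $a\in\sqrt{\mathrm{Ann}_{R_0}(M)}$; hence the two radicals coincide and $\mathrm{sdim}_0(M/IM)=\mathrm{sdim}_0(M)$.

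For part (2), the first step is to observe that a single element $y$ of any system of odd parameters $y_{1},\dots,y_{l}$ of $M$ is already a system of odd parameters on its own: reordering so that $y=y_{1}$, the inclusions $y_{1}\cdots y_{l}M\subseteq yM\subseteq M$ of $R_0$-submodules together with monotonicity of $\dim$ and the defining equality $\dim(y_{1}\cdots y_{l}M)=\dim(M)$ force $\dim(yM)=\dim(M)$. Then the hypothesis $yM\subseteq N\subseteq M$ and monotonicity once more yield $\dim(M)=\dim(yM)\le\dim(N)\le\dim(M)$, which is the asserted equality $\mathrm{sdim}_0(N)=\mathrm{sdim}_0(M)$.

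Neither part presents a real obstacle; the only place where the super-structure genuinely enters is the induction $a^{k}M\subseteq I^{k}M$ in part (1), where one must move the even central scalar $a$ past the possibly odd generators of the superideal $I$ — this is exactly what super-commutativity provides, and it is routine. Everything else is the usual radical-and-support bookkeeping familiar from the commutative case.
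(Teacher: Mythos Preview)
Your proof is correct and follows essentially the same approach as the paper's: for (1) the paper also argues that $rM\subseteq IM$ implies $r^{l}M=0$ when $I^{l}=0$, and for (2) it writes the chain of annihilator inclusions $\mathrm{Ann}_{R_0}(M)\subseteq \mathrm{Ann}_{R_0}(N)\subseteq \mathrm{Ann}_{R_0}(yM)\subseteq \mathrm{Ann}_{R_0}(y^{\underline{s}}M)$, which is exactly your monotonicity argument in dual form. Your treatment is in fact slightly more explicit (spelling out the induction $a^{k}M\subseteq I^{k}M$ and the role of super-commutativity), but the ideas are identical.
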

\begin{proof}
$(1)$ Assume that  $I^l=0$ for sufficiently large nonnegative integer $l$. If an even element $r$ satisfies $rM\subseteq IM$, then $r^{l}M=0$, that is $\mathrm{Ann}_{R_0}(M/IM)$ is contained in the radical of $\mathrm{Ann}_{R_0}(M)$. 

$(2)$ If $y=y_1$ and $y_1, \ldots , y_s$ is a system of odd parameters, then
\[\mathrm{Ann}_{R_0}(M)\subseteq \mathrm{Ann}_{R_0}(N)\subseteq \mathrm{Ann}_{R_0}(yM)\subseteq \mathrm{Ann}_{R_0}(y^{\underline{s}}M).\]
\end{proof}
\begin{lm}\label{if even dimensions are the same}
Let $N$ be a super-submodule of $M$, such that $\mathrm{sdim}_0(N)=\mathrm{sdim}_0(M)$. Then
$\mathrm{sdim}_1(N)\leq \mathrm{sdim}_1(M)$. 
\end{lm}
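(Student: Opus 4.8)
The plan is to use the characterization from Proposition~\ref{extension_of_prop}, which reduces the computation of $\mathrm{sdim}_1$ to the even dimensions of the modules $R_1^l M$. Fix a generating set $y_1,\dots,y_d$ of the $R_0$-module $R_1$. By Proposition~\ref{extension_of_prop}, $\mathrm{sdim}_1(N)=\max\{l\mid \dim(R_1^l N)=\dim(N)\}$ and similarly for $M$. So it suffices to show: if $\dim(R_1^l N)=\dim(N)$ for some $l\geq 1$, then $\dim(R_1^l M)=\dim(M)$. Since we are given $\dim(N)=\mathrm{sdim}_0(N)=\mathrm{sdim}_0(M)=\dim(M)$, the equality $\dim(R_1^l N)=\dim(N)$ becomes $\dim(R_1^l N)=\dim(M)$.

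The key observation is the chain of $R_0$-module inclusions $R_1^l N\subseteq R_1^l M$, which yields $\mathrm{Ann}_{R_0}(R_1^l M)\subseteq \mathrm{Ann}_{R_0}(R_1^l N)$, hence $\dim(R_1^l M)\geq \dim(R_1^l N)$. Combining this with $\dim(R_1^l N)=\dim(M)$ gives $\dim(R_1^l M)\geq \dim(M)$. The reverse inequality $\dim(R_1^l M)\leq \dim(M)$ is immediate from $R_1^l M\subseteq M$, so $\dim(R_1^l M)=\dim(M)$, as required. Taking the supremum over all such $l$ gives $\mathrm{sdim}_1(N)\leq \mathrm{sdim}_1(M)$. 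The edge case $\mathrm{sdim}_1(N)=0$ is trivial, since $\mathrm{sdim}_1(M)\geq 0$ always.

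I do not expect any serious obstacle here: the statement is essentially a monotonicity property, and once one passes to the formulation via $\dim(R_1^l\,-)$ from Proposition~\ref{extension_of_prop}, everything follows from the elementary fact that a submodule has annihilator containing that of the ambient module, together with the hypothesis equating the even dimensions. The only point requiring a modicum of care is making sure the hypothesis $\mathrm{sdim}_0(N)=\mathrm{sdim}_0(M)$ is used in exactly the right place — namely to upgrade $\dim(R_1^lN)=\dim(N)$ into a statement about $\dim(M)$ — and that the degenerate case where no valid $l$ exists for $N$ is handled separately.
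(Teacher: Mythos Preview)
Your proof is correct and takes essentially the same approach as the paper. The paper works directly with a specific system of odd parameters $y_1,\ldots,y_s$ of $N$ and uses the annihilator chain $\mathrm{Ann}_{R_0}(M)\subseteq \mathrm{Ann}_{R_0}(y^{\underline{s}}M)\subseteq \mathrm{Ann}_{R_0}(y^{\underline{s}}N)$, while you pass through the equivalent formulation $\dim(R_1^l\,-)$ from Proposition~\ref{extension_of_prop}; the underlying annihilator-inclusion argument is identical.
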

\begin{proof}
Assume that $y_1, \ldots, y_s$ is a system of odd parameters of $N$. Then
\[\mathrm{Ann}_{R_0}(M)\subseteq \mathrm{Ann}_{R_0}(y^{\underline{s}}M)\subseteq \mathrm{Ann}_{R_0}(y^{\underline{s}}N)\]
implies that $y_1, \ldots, y_s$ is a system of odd parameters of $M$. 
\end{proof}
Recall that an odd element $r\in R_1$ is $M$-regular if the kernel of the odd $R$-linear map
$m\mapsto rm, m\in M$, coincides with $rM$ (cf. \cite{sm}, (2.1.1)). A sequence of odd elements $r_1, \ldots, r_t$ is called $M$-regular if each $r_i$ is $M$-regular modulo $(Rr_1+\ldots +Rr_{i-1})M$.
\begin{pr}\label{factoring by parameters}
The following statements hold :
\begin{enumerate}
\item Let $y_1, \ldots, y_t$ be a fragment of a longest system of odd parameters of $M$. Set $I=Ry_1+\ldots Ry_t$. Then 
\[\mathrm{sdim}_0(M)=\mathrm{sdim}_0(IM)=\mathrm{sdim}_0(M/IM)\] and 
\[\mathrm{sdim}_1(M)\geq \mathrm{sdim}_1(IM)\geq \mathrm{sdim}_1(M)-1, \ \mathrm{sdim}_1(M/IM)\geq
\mathrm{sdim}_1(M)-t .\]
\item If $y_1, \ldots, y_t$ is an odd $M$-regular sequence, then 
$\mathrm{sdim}_1(M)\geq t$ and
\[\mathrm{sdim}_1(M/IM)=\mathrm{sdim}_1(y^{\underline{t}}M)\leq \mathrm{sdim}_1(M)-t.\]
Moreover, $\mathrm{sdim}_1(M/IM)$ is equal to $\mathrm{sdim}_1(M)-t$ if and only if $y_1, \ldots, y_t$ can be extended to a longest system of odd parameters of $M$.  
\end{enumerate}
\end{pr}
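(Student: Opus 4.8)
The plan is to translate every assertion into a containment of ideals of $R_0$ of the form $\mathrm{Ann}_{R_0}(y^SM)$ and then to feed these into Proposition \ref{extension_of_prop}, the definition of $\mathrm{sdim}$, and Lemmas \ref{on even dimension} and \ref{if even dimensions are the same}. Two elementary facts will be used throughout: an odd element squares to zero, so any product of odd elements with a repeated factor vanishes; and consequently, if $I=Ry_1+\dots+Ry_t$ with the $y_i\in R_1$, then $I^{t+1}=0$, so $I$ is nilpotent.

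\emph{Statement (1).} I would fix a longest system of odd parameters $y_1,\dots,y_t,y_{t+1},\dots,y_l$ of $M$ (so $l=\mathrm{sdim}_1(M)$) subordinating a prime $\mathfrak{p}_0$, so that $\mathrm{Ann}_{R_0}(y^{\underline{l}}M)\subseteq\mathfrak{p}_0$ and $\dim(R_0/\mathfrak{p}_0)=\mathrm{sdim}_0(M)$. The even-dimension equalities are quick: $IM\supseteq y_1M$ with $y_1$ an odd parameter gives $\mathrm{sdim}_0(IM)=\mathrm{sdim}_0(M)$ by Lemma \ref{on even dimension}(2); $\mathrm{sdim}_0(M/IM)=\mathrm{sdim}_0(M)$ follows from Lemma \ref{on even dimension}(1) because $I$ is nilpotent; and then Lemma \ref{if even dimensions are the same} gives $\mathrm{sdim}_1(IM)\le\mathrm{sdim}_1(M)$. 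The two remaining lower bounds I would get by exhibiting explicit systems of odd parameters. For $IM$: since $y_2\cdots y_l\cdot IM\supseteq y_2\cdots y_l\,y_1M=y^{\underline{l}}M$, we obtain $\mathrm{Ann}_{R_0}(y_2\cdots y_l\,IM)\subseteq\mathfrak{p}_0$, hence $\dim(y_2\cdots y_l\,IM)=\dim(IM)$, so $y_2,\dots,y_l$ is a system of odd parameters of $IM$ and $\mathrm{sdim}_1(IM)\ge l-1$. For $M/IM$: if $r\in R_0$ annihilates $y_{t+1}\cdots y_l(M/IM)$, i.e. $r\,y_{t+1}\cdots y_lM\subseteq IM$, then multiplying by $y_1\cdots y_t$ gives $r\,y^{\underline{l}}M\subseteq(y_1\cdots y_t)(IM)=0$ (each generator $y_1\cdots y_ty_i$, $i\le t$, has a repeated factor), so $r\in\mathrm{Ann}_{R_0}(y^{\underline{l}}M)\subseteq\mathfrak{p}_0$; hence $\mathrm{Ann}_{R_0}(y_{t+1}\cdots y_l(M/IM))\subseteq\mathfrak{p}_0$, $y_{t+1},\dots,y_l$ is a system of odd parameters of $M/IM$, and $\mathrm{sdim}_1(M/IM)\ge l-t$.

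\emph{Statement (2).} The crux will be the isomorphism of $R$-supermodules (of parity $t\bmod 2$) between $M/IM$ and $y^{\underline{t}}M$ given by $m+IM\mapsto y^{\underline{t}}m$. Such an isomorphism identifies the two annihilator ideals, giving $\mathrm{sdim}(M/IM)=\mathrm{sdim}(y^{\underline{t}}M)$; combined with Lemma \ref{on even dimension}(1) applied to the nilpotent $I$ it also yields $\dim(y^{\underline{t}}M)=\dim(M/IM)=\dim(M)$, so that $y_1,\dots,y_t$ is a system of odd parameters of $M$ and $\mathrm{sdim}_1(M)\ge t$. That the map is well defined and surjective is immediate, with $IM$ contained in the kernel because $y^{\underline{t}}y_i=0$ for $i\le t$; the real content is the reverse inclusion $\ker(m\mapsto y^{\underline{t}}m)\subseteq IM$, which I would prove by induction on $t$: the case $t=1$ is exactly $M$-regularity of $y_1$, and for the step, $y^{\underline{t}}m=y_1(y_2\cdots y_tm)=0$ forces $y_2\cdots y_tm\in\ker(y_1\cdot)=y_1M$, so in $M'=M/y_1M$ — on which $y_2,\dots,y_t$ is again an $M'$-regular sequence — we have $y_2\cdots y_t\bar m=0$, and the inductive hypothesis gives $\bar m\in y_2M'+\dots+y_tM'$, i.e. $m\in IM$. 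Granting this, for the upper bound I would take a system of odd parameters $z_1,\dots,z_s$ of $y^{\underline{t}}M$ and note that $z_1\cdots z_s(y^{\underline{t}}M)=(y_1\cdots y_tz_1\cdots z_s)M$ has dimension $\dim(y^{\underline{t}}M)=\dim(M)$; thus $y_1,\dots,y_t,z_1,\dots,z_s$ is a system of odd parameters of $M$ (the $z_j$ are forced distinct from the $y_i$, else the product vanishes), so $s+t\le\mathrm{sdim}_1(M)$ and $\mathrm{sdim}_1(y^{\underline{t}}M)\le\mathrm{sdim}_1(M)-t$. For the "if and only if": if $y_1,\dots,y_t$ extends to a longest system of odd parameters, statement (1) gives $\mathrm{sdim}_1(M/IM)\ge\mathrm{sdim}_1(M)-t$, which together with the reverse inequality just proved forces equality; conversely, given $\mathrm{sdim}_1(M/IM)=\mathrm{sdim}_1(M)-t$, a system of odd parameters of $M/IM\cong y^{\underline{t}}M$ of that length together with $y_1,\dots,y_t$ forms, by the computation above, a system of odd parameters of $M$ of length $\mathrm{sdim}_1(M)$, i.e. a longest one extending $y_1,\dots,y_t$ (and if $\mathrm{sdim}_1(M)=t$ the sequence $y_1,\dots,y_t$ already is such).

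\emph{Main obstacle.} Once the isomorphism $M/IM\cong y^{\underline{t}}M$ is in hand, all the remaining assertions are routine manipulations of annihilator ideals in $R_0$ built on the two lemmas and Proposition \ref{extension_of_prop}; the genuinely substantive step is therefore the kernel computation $\ker(m\mapsto y^{\underline{t}}m)=IM$. This is the one place where actual $M$-regularity of the sequence is used, as opposed to the weaker "fragment of a longest system" hypothesis of part (1) — which is precisely why $y^{\underline{t}}M$ there behaves like the honest quotient $M/IM$, and why the analogue of part (2) fails for general fragments.
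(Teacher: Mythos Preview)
Your proof is correct and follows essentially the same route as the paper's: both parts reduce to annihilator containments of the form $\mathrm{Ann}_{R_0}(y^S M)\subseteq\mathfrak{p}_0$, invoking Lemmas \ref{on even dimension} and \ref{if even dimensions are the same} for the even dimensions and the upper bound on $\mathrm{sdim}_1(IM)$, and part (2) hinges on the isomorphism $M/IM\simeq y^{\underline{t}}M$. The only noteworthy difference is that the paper simply cites \cite{sm}, Corollary 3.1.2, for this isomorphism, whereas you supply the short inductive argument for the kernel computation yourself; your treatment of the ``if and only if'' clause is also slightly more explicit than the paper's, which just says ``combining with (1)''.
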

\begin{proof}
$(1)$ Lemma \ref{on even dimension} implies the first two equations. If $y_1, \ldots , y_l$ is a longest system of odd parameters of $M$, that extends $y_1, \ldots, y_t$, then $y_2, \ldots, y_l$ is obviously
a system of odd parameters of $y_1M$. By Lemma \ref{if even dimensions are the same} we have
\[ \mathrm{sdim}_1(M)\geq \mathrm{sdim}_1(IM)\geq \mathrm{sdim}_1(y_1M)\geq \mathrm{sdim}_1(M)-1. \]
Further, we obviously have
\[\mathrm{Ann}_{R_0}(M)\subseteq\mathrm{Ann}_{R_0}((y^{\underline{l}\setminus\underline{t}}M+IM)/IM)\subseteq \mathrm{Ann}_{R_0}(y^{\underline{l}}M).\]
In other words, $y_{t+1}, \ldots y_l$ form a system of odd parameters for $M/IM$.

$(2)$ The map $m\mapsto y^{\underline{t}}m, m\in M$, induces an isomorphism $M/IM\simeq y^{\underline{t}}M$ of $R$-supermodules, which has parity $|y^{\underline{t}}|$ (see \cite{sm}, Corollary 3.1.2). In particualr, 
$\mathrm{sdim}_1(M/IM)=\mathrm{sdim}_1(y^{\underline{t}}M)$. Moreover, if $ry^{\underline{t}}M=0$, then
$rM\subseteq IM$ and $r^{t+1}M=0$. Therefore, $y_1, \ldots, y_t$ form a system of odd parameters of $M$ and $\mathrm{sdim}_1(M)\geq t$. 

If $y_{t+1}, \ldots, y_{l}$ is a longest system of odd parameters for $M/IM$, then $y_1, \ldots, y_l$ is a system of odd parameters for $M$, hence $l\leq s$. Indeed, one easily sees that
\[\mathrm{Ann}_{R_0}(y^{\underline{l}}M)=\mathrm{Ann}_{R_0}((y^{\underline{l}\setminus\underline{t}}M +IM)/IM).\]
Combining with $(1)$, we obtain that $l=s$ if and only if $y_1, \ldots, y_t$ can be extended to a longest system of odd parameters of $M$.
\end{proof}
\begin{rem}\label{some incorrect prop}
In \cite{zubmas}, Lemma 4.2 (3) erroneously states that any odd regular sequence can be included into the longest system of parameters. In the above proposition we give the correct formulation of this statement.
\end{rem}
Let $A$ be a commutative Artinian ring.
In the rest part of this section we construct a supermodule $M$ over the polynomial super-ring
$R=A[Z_1, Z_2, Z_3, Y]$ with free odd generators $Z_1, Z_2, Z_3, Y$, such that $Y$ is an odd $M$-regular element, but $\mathrm{sdim}_1(M/YM)<\mathrm{sdim}_1(M)-1$. In particular, $Y$ can not be included in any system of odd parameters of $M$.

Let $V$ be an $A$-supermodule of finite length. Set $M=V\oplus\Pi V$. To define a structure of $R$-supermodule on $M$ one needs to define an action of the free generators of $R$ on $M$ by some odd $A$-linear endomorphisms of 
$M$, which supercommute to each other.

Set $Y(v)=\Pi v, Y(\Pi v)=0, v\in V$. Thus it is obvious that $Y$ is $M$-regular. 
\begin{lm}\label{a certain action}
Assume that an action of $R$ on $M$ is defined so that \[ Z_i Z_j M\subseteq YM, 1\leq i, j\leq 3 \ \mbox{and} \
Z_1 Z_2 Z_3 M\neq 0 .\] Then $\mathrm{sdim}_1(M)=3$, but $\mathrm{sdim}_1(M/YM)\leq 1$. 
\end{lm}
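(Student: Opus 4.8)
The plan is to verify both assertions using Lemma~\ref{over_Artinian}, which applies because $M$ is an $A$-supermodule of finite length and $R = A[Z_1,Z_2,Z_3,Y]$ has $R_0$ Artinian as well. First I would compute $\mathrm{sdim}_1(M)$. Since $A$ is Artinian and $M$ has finite length, $\dim(M)=0$, so by Lemma~\ref{supermodules of finite length} and Lemma~\ref{over_Artinian} the quantity $\mathrm{sdim}_1(M)$ equals the largest $l$ for which there exist odd elements $r_1,\dots,r_l$ with $r_1\cdots r_l M\neq 0$; equivalently, by Proposition~\ref{extension_of_prop}, the largest $l$ with $R_1^l M\neq 0$. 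Now $R_1$ is generated over $R_0$ by $Z_1,Z_2,Z_3,Y$, and I claim $R_1^4 M = 0$ while $R_1^3 M\neq 0$. For the nonvanishing, $Z_1Z_2Z_3 M\neq 0$ by hypothesis, so $R_1^3 M\neq 0$ and hence $\mathrm{sdim}_1(M)\geq 3$. For the vanishing, any degree-$4$ monomial in $Z_1,Z_2,Z_3,Y$ (acting on $M$) either contains $Y^2$, which kills $M$ since $Y^2 = 0$ as an endomorphism (indeed $Y(Y(v)) = Y(\Pi v) = 0$ and $Y(Y(\Pi v)) = 0$), or contains at least two of the $Z_i$'s, hence lies in $YM$ by the assumption $Z_iZ_j M\subseteq YM$; but then applying the remaining odd factor — which, being one more odd element, is another $Z_k$ or $Y$ — sends $YM$ into $Y(YM)\subseteq Y^2 M = 0$ or into $Z_k Y M$, and $Z_k Y M\subseteq Z_k Y M$; here I use that a degree-$4$ monomial always factors as (one odd generator) times (a degree-$3$ monomial already lying in $YM$), so the whole product lands in $R_1\cdot YM$. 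The point is that $R_1 \cdot YM = 0$: indeed $YM = \Pi V$ (the odd translate of $V$), and $Y(\Pi V) = 0$ while $Z_i(\Pi V)\subseteq Z_i M$, which need not vanish — so I must be more careful here. The correct bookkeeping is: $R_1^4 M \subseteq R_1\cdot(R_1^2 M)\subseteq R_1\cdot YM$, and $R_1^2\cdot(YM) = Y\cdot R_1^2 M\subseteq Y\cdot YM = Y^2 M = 0$; so $R_1^4 M\subseteq R_1\cdot YM$ and $R_1^2\cdot YM=0$, giving $R_1^4 M\subseteq R_1 YM$ with $R_1(R_1 YM)=0$, i.e. $R_1 YM$ might be nonzero but $R_1^5 M = 0$. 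To pin down $R_1^4 M = 0$ exactly I would instead argue: $R_1^4 M$ is spanned by monomials containing either $Y$ (then by supercommutativity we may move $Y$ to the right, and the remaining degree-$3$ factor maps $M$ into $M$, so the monomial maps $M$ into $Y M$, then... ) — the cleanest route is to observe $R_1^4 = R_1^2\cdot R_1^2$ and $R_1^2 M\subseteq YM$, so $R_1^4 M\subseteq R_1^2\cdot YM = Y R_1^2 M\subseteq Y^2 M = 0$. This settles $\mathrm{sdim}_1(M)=3$.

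Next I would compute $\mathrm{sdim}_1(M/YM)$. Again $M/YM$ has finite length over $A$, so by Lemma~\ref{over_Artinian} and Proposition~\ref{extension_of_prop}, $\mathrm{sdim}_1(M/YM) = \max\{l \mid R_1^l(M/YM)\neq 0\}$. Since $Y$ acts as zero on $M/YM$, the generators of $R_1$ acting nontrivially on $M/YM$ are among $Z_1,Z_2,Z_3$ modulo $YM$; and $R_1^2(M/YM) = (R_1^2 M + YM)/YM = 0$ because $R_1^2 M\subseteq YM$ by hypothesis (here $Z_iZ_j M\subseteq YM$ for all $i,j$, and $Y\cdot R_1 M\subseteq YM$ trivially, so indeed all degree-$2$ odd monomials kill $M/YM$). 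Therefore $\mathrm{sdim}_1(M/YM)\leq 1$, which is the second assertion. (It equals $1$ precisely when some $Z_i M\not\subseteq YM$, but the Lemma only claims the inequality.)

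The main obstacle is the first computation — verifying $\mathrm{sdim}_1(M) = 3$ rather than something larger — because one must rule out long products of odd generators being nonzero on $M$ without knowing the individual actions of the $Z_i$ beyond the two stated relations. The key leverage is purely formal: $R_1^2 M\subseteq YM$ forces $R_1^{2k}M\subseteq Y^k M$, and $Y^2 = 0$ on $M$ collapses everything of odd-degree $\geq 4$; combined with the explicit nonvanishing $Z_1Z_2Z_3 M\neq 0$ this traps $\mathrm{sdim}_1(M)$ exactly at $3$. Everything else is a direct application of Lemma~\ref{over_Artinian}, Lemma~\ref{supermodules of finite length}, and Proposition~\ref{extension_of_prop}. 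In particular, since $Y$ is $M$-regular (as noted before the lemma) but $\mathrm{sdim}_1(M/YM)\leq 1 < 2 = \mathrm{sdim}_1(M)-1$, Proposition~\ref{factoring by parameters}(2) shows $Y$ cannot be extended to a longest system of odd parameters of $M$, and in fact cannot be part of any system of odd parameters of $M$ of length exceeding $\mathrm{sdim}_1(M/YM)+1 \le 2$; a fortiori $Y$ lies in no system of odd parameters of $M$ realizing $\mathrm{sdim}_1(M)=3$.
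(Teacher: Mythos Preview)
Your argument is correct and follows essentially the same strategy as the paper: invoke Lemma~\ref{over_Artinian} (via Proposition~\ref{extension_of_prop}) to reduce both computations to determining the largest $l$ with $R_1^l$ acting nontrivially, then use $R_1^2 M\subseteq YM$ together with $Y^2=0$ to force $R_1^4 M=0$ and $R_1^2(M/YM)=0$. The paper's writeup is terser---it notes that $R_1^4 M = YZ_1Z_2Z_3 M\subseteq YZ_1(YM)=0$ directly, and for the second assertion argues that $Y$ cannot lie in any length-$3$ system of odd parameters (since such a product lands in $\sum RYZ_iZ_j$, which kills $M$) and then appeals to Proposition~\ref{factoring by parameters}(2)---whereas you compute $R_1^2(M/YM)=0$ directly, which is slightly more elementary and avoids that proposition. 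After trimming the exploratory detours in your first paragraph, the two proofs coincide.
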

\begin{proof}
Since $R$ is Artinian, Lemma \ref{over_Artinian} implies that $\mathrm{sdim}_1(M)=3$. In fact, $R_1^3 M\neq 0$ but $R_1^4M=YZ_1Z_2Z_3M\subseteq YZ_1(YM)=0$. Finally, $Y$ can not be included in any system of
odd parameters of length $3$, since the product of the elements of such system belongs to the superideal
$\sum_{1\leq i< j\leq 3}RYZ_iZ_j$, which acts trivially on $M$. Proposition \ref{factoring by parameters} (2) concludes the proof.
\end{proof}
The action of each $Z_i$ can be expressed as $Z_i(v)=\phi_i(v) +\Pi\psi_i(v)$, where $\phi_i, \psi_i$ are odd and even endomorphisms of $A$-supermodule $V$ respectively. Then
$Z_i(\Pi v)=(Z_i Y)(v)=-Y(Z_i(v))=-Y(\phi_i(v))=-\Pi\phi_i(v)$.
\begin{lm}\label{supercommuting}
The odd endomorphisms of $M$, induced by the free generators of $R$, supercommute to each other and satisfy the conditions of Lemma \ref{a certain action} if and only if the following equations and one inequality hold :
\begin{enumerate}
\item $\phi_i\psi_i-\psi_i\phi_i=0, \phi_i\phi_j=0, 1\leq i, j\leq 3$;
\item $\psi_i\phi_j-\phi_i\psi_j-\phi_j\psi_i+\psi_j\phi_i=0, 1\leq i\neq j\leq 3$;
\item $\phi_1\psi_2\phi_3\neq 0$.
\end{enumerate}   
\end{lm}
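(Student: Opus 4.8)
The proof is a direct translation of the operator identities into relations among the $\phi_i$ and $\psi_i$, carried out by evaluating everything on the two homogeneous summands $V$ and $\Pi V$ of $M$. First I would record the basic data: $Y$ sends $v\mapsto\Pi v$ and $\Pi v\mapsto 0$, so $Y^2=0$ and $YM=\Pi V$; and $Z_i(v)=\phi_i(v)+\Pi\psi_i(v)$, $Z_i(\Pi v)=-\Pi\phi_i(v)$ for $v\in V$, the second formula being forced (as already noted just before the lemma) by $Z_iY=-YZ_i$. A short check shows that, conversely, with these definitions each $Z_i$ is a well-defined odd $A$-endomorphism of $M$ which automatically anticommutes with $Y$, and that $Y$ is $M$-regular; hence among the supercommutation relations only $Z_i^2=0$ and $Z_iZ_j+Z_jZ_i=0$ ($i\neq j$) remain to be analysed.

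Next I would compute, for $v,w\in V$,
\[
Z_iZ_j(v)=\phi_i\phi_j(v)+\Pi(\psi_i\phi_j-\phi_i\psi_j)(v),\qquad Z_iZ_j(\Pi w)=\Pi\phi_i\phi_j(w).
\]
From the first identity, the condition $Z_iZ_jM\subseteq YM=\Pi V$ for all $1\leq i,j\leq 3$ (in particular for $i=j$) is equivalent to $\phi_i\phi_j=0$; once this holds, $Z_iZ_j$ automatically annihilates $\Pi V$. Substituting $\phi_i\phi_j=0$ back in, the relation $Z_i^2=0$ collapses to $\psi_i\phi_i-\phi_i\psi_i=0$, and $Z_iZ_j+Z_jZ_i=0$ collapses — its $\phi_i\phi_j+\phi_j\phi_i$-part being already zero — to
\[
\psi_i\phi_j-\phi_i\psi_j-\phi_j\psi_i+\psi_j\phi_i=0 .
\]
Together with $\phi_i\phi_j=0$ these are precisely conditions (1) and (2).

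Finally I would compute the triple product. Using $\phi_2\phi_3=0$ one gets $Z_2Z_3(v)=\Pi(\psi_2\phi_3-\phi_2\psi_3)(v)$; applying $Z_1$ and using $\phi_1\phi_2=0$ gives $Z_1Z_2Z_3(v)=-\Pi\phi_1\psi_2\phi_3(v)$, while $Z_1Z_2Z_3$ kills $\Pi V$. Therefore $Z_1Z_2Z_3M\neq 0$ if and only if $\phi_1\psi_2\phi_3\neq 0$, which is (3); assembling the three equivalences yields the lemma. The computations themselves are routine; the only points requiring care are the sign in $Z_i(\Pi v)=-\Pi\phi_i(v)$ and the verification that, after imposing $\phi_i\phi_j=0$, the relations $Z_i^2=0$ and $Z_iZ_j+Z_jZ_i=0$ leave behind exactly equations (1) and (2) and nothing more. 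That last bookkeeping is the only place where something could be overlooked, so I would carry it out explicitly by splitting into the two summands as above.
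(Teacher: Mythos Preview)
Your proposal is correct and follows essentially the same approach as the paper: both proofs rest on the explicit formulas
\[
Z_iZ_j(v)=\phi_i\phi_j(v)+\Pi(\psi_i\phi_j-\phi_i\psi_j)(v),\qquad Z_1Z_2Z_3(v)=\pm\Pi\,\phi_1\psi_2\phi_3(v),
\]
and read off the conditions from them; you simply carry out the bookkeeping (splitting into the summands $V$ and $\Pi V$, and separating the role of $Z_iZ_jM\subseteq YM$ from that of the anticommutation relations) more explicitly than the paper does. Your sign in $Z_1Z_2Z_3(v)=-\Pi\phi_1\psi_2\phi_3(v)$ is in fact the correct one, differing from the paper's stated formula, but this is immaterial since only nonvanishing is at stake.
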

\begin{proof}
All these equations can be easily derived from the formulas 
\[(Z_i Z_j)(v)=(\phi_i\phi_j)(v)+\Pi((\psi_i\phi_j-\phi_i\psi_j)(v)), 1\leq i, j\leq 3,\]
and
\[(Z_1 Z_2 Z_3)(v)=\Pi((\phi_1\psi_2\phi_3)(v)).\]
\end{proof}
Consider a (not necessary super-commutative) $A$-superalgebra $B$, generated by the "formal" elements
$\phi'_i, \psi'_i$, subject to the relations (1) and (2) from Lemma \ref{supercommuting}, and the relations $p=0$, where $p$ is any monomial in $\phi'_i, \psi'_i$ of degree at least 4, $1\leq i\leq 3$.
It is clear that $B$ is a graded superalgebra, provided all $\phi'_i$ and $\psi'_i$ are odd and even 
elements of degree 1 respectively. Moreover, $B$ has finite length as an $A$-supermodule.
\begin{lm}\label{(3) holds in B}
The product $\phi'_1\psi'_2\phi'_3$ is not zero in $B$.
\end{lm}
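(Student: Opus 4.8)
The plan is to confine $\phi'_1\psi'_2\phi'_3$ to a one-dimensional homogeneous piece of $B$ and compute that piece directly. First I would equip $B$ with the $(\mathbb{Z}^3\times\mathbb{Z}_{\ge 0})$-grading in which $\phi'_i$ has degree $(e_i,1)$ and $\psi'_i$ has degree $(e_i,0)$, where $e_1,e_2,e_3$ denote the coordinate vectors of $\mathbb{Z}^3$ and the last coordinate records the number of $\phi'$-factors of a monomial. Every defining relation of $B$ is homogeneous for this grading: the defining relations of degree $\ge 4$ are monomials, while each of the relations (1) and (2) from Lemma \ref{supercommuting} is a combination of monomials all of the same degree --- the only point to notice being that the four monomials of relation (2) all have $\mathbb{Z}^3$-degree $e_i+e_j$ and contain exactly one $\phi'$-factor. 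Hence $B$ is $(\mathbb{Z}^3\times\mathbb{Z}_{\ge 0})$-graded, $\phi'_1\psi'_2\phi'_3$ lies in the component $B_{((1,1,1),2)}$, and it suffices to show $B_{((1,1,1),2)}\ne 0$; in fact this component will turn out to be free of rank one over $A$, generated by the class of $\phi'_1\psi'_2\phi'_3$.

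To compute $B_{((1,1,1),2)}$ I would argue as follows. It is the quotient of the free $A$-module on the $18$ monomials of this degree --- the length-$3$ words using one letter of each index, two of which are $\phi'$'s --- by the corresponding graded piece of the defining ideal. Since a word of this degree has length $3$ while the monomial relations of $B$ have length $\ge 4$, that graded piece of the ideal is spanned by the elements $x\rho$ and $\rho x$ with $x$ a single letter and $\rho$ a quadratic relation; matching degrees, the only surviving ones are $\psi'_k\phi'_i\phi'_j$, $\phi'_i\phi'_j\psi'_k$, $\phi'_k\rho_{ij}$ and $\rho_{ij}\phi'_k$ for $\{i,j,k\}=\{1,2,3\}$, where $\rho_{ij}$ denotes relation (2). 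The first two families annihilate the $12$ monomials in which the two $\phi'$-letters are adjacent. On the remaining six monomials, written $g(a,c,b):=\phi'_a\psi'_c\phi'_b$ with $\{a,b,c\}=\{1,2,3\}$, the relations $\phi'_k\rho_{ij}$ and $\rho_{ij}\phi'_k$ reduce, modulo $\phi'_i\phi'_j=0$, to $g(a,c,b)+g(a,b,c)=0$ and $g(a,c,b)+g(c,a,b)=0$; so $g$ transforms through the sign character of $S_3$ acting on its three slots, whence $g(a,c,b)$ equals $\phi'_1\psi'_2\phi'_3$ times the sign of the permutation $(1,2,3)\mapsto(a,c,b)$, and nothing further is imposed. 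Finally, the $A$-linear functional sending each $g(a,c,b)$ to that sign and sending every monomial with two adjacent $\phi'$'s to $0$ kills all of the relations above, hence descends to $B_{((1,1,1),2)}$ and takes the class of $\phi'_1\psi'_2\phi'_3$ to $1$; this proves the claim.

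The one genuine difficulty is explaining why one should not simply write down an explicit model: the obvious hands-on route --- producing a finite-length supermodule with operators satisfying the conditions of Lemma \ref{supercommuting} on which $\phi'_1\psi'_2\phi'_3$ acts nontrivially --- is surprisingly resistant, because in small modules the relations push $\psi'_2\phi'_3(m)$ into $\sum_k\mathrm{im}\,\phi'_k$, after which $\phi'_1\psi'_2\phi'_3(m)\in\sum_k\mathrm{im}(\phi'_1\phi'_k)=0$ and the module degenerates. The grading argument circumvents choosing any model at all: once the fine grading is in place, everything reduces to bookkeeping which relations can occur in a single one-dimensional graded piece, and the only substantive check --- homogeneity of relation (2) --- is immediate.
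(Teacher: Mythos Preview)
Your proof is correct and follows essentially the same route as the paper: isolate the degree-$3$ piece of the free algebra in which $\phi'_1\psi'_2\phi'_3$ lives and verify by direct linear algebra that it is not in the span of the degree-$3$ relation products. Your explicit $\mathbb{Z}^3\times\mathbb{Z}_{\ge 0}$-grading and sign functional make the bookkeeping more systematic than the paper's version, which singles out the two type-(2) products $\phi'_1\rho_{23}$ and $\rho_{21}\phi'_3$ containing the monomial $\phi'_1\psi'_2\phi'_3$, reduces them modulo the type-(1) relations to $g(1,2,3)+g(1,3,2)$ and $-g(2,1,3)-g(1,2,3)$, and observes that no combination of these two equals $g(1,2,3)$; your functional handles all six type-(2) products in that graded piece uniformly.
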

\begin{proof}
One has to show that $\phi'_1\psi'_2\phi'_3$ can not be expressed as any $A$-linear combination of the polynomials $xp$ or $px$, where $p$ is one of the left hand side polynomials in the relations $(1), (2)$ and $x$ is one of the free generators of the absolutely free $A$-superalgebra $A<\phi_i', \psi_i'\mid 1\leq i\leq 3>$. It is obvious that if $p=0$ is a relation (1), then both $xp$ and $px$ do not contain the monomial $\phi'_1\psi'_2\phi'_3$ at all. Therefore, $\phi'_1\psi'_2\phi'_3$ should be an $A$-linear combination of $xp$ and $px$, where $p=0$ is a relation (2). There are only two such polynomials :
\[\phi_1'(\psi_2'\phi_3'-\phi_2'\psi_3'-\phi_3'\psi_2'+\psi_3'\phi_2') \ \mbox{and} \ (\psi'_2\phi'_1-\phi_2'\psi_1'-\phi_1'\psi_2'+\psi_1'\phi_2')\phi_3' .\]
Modulo the relations (1) they congruent to
\[\phi_1'\psi_2'\phi_3'+\phi_1'\psi_3'\phi_2' \ \mbox{and} \ -\phi_2'\psi_1'\phi'_3-\phi_1'\psi_2'\phi_3'.\]
It is clear now that no linear combination of the last polynomials is equal to $\phi_1'\psi_2'\phi_3'$.
Lemma is proven.
\end{proof}
We set $V=B$ and define $\phi_i$ and $\psi_i$ as the operators $b\mapsto \phi_i'b$ and $b\mapsto \psi_i' b, b\in B, 1\leq i\leq 3$.  It is evident that they satisfy (1), (2), (3) of Lemma \ref{supercommuting}.

\section{Odd regular elements and a superized Hochschild cochain complex}

Let $A$ be a super-ring. In this section we describe an homological approach to the following question.
How to construct a super-ring $R$ with an odd regular element $y\in R$ such that $R/Ry\simeq A$?
By the definition, $yR\simeq\Pi A$ as a $\mathbb{Z}$-supermodule, on which $A$ acts as
$a\Pi b=(-1)^{|a|}\Pi(ab), a, b\in A$. 

For the sake of simplicity we assume that the sequence of $\mathbb{Z}$-supermodules \[0\to Ry\to R\to R/Ry\to 0\] is split.
For exampe, this is the case if $R$ is a superalgebra over a field.
Then $R$ is isomorphic to $A\oplus\Pi A$ as a $\mathbb{Z}$-supermodule, and the multiplication on $R$ can be defined as
\[a\circ b=ab+\Pi\pi(a, b), a\circ\Pi b=(-1)^{|a|}\Pi(ab), (\Pi b)\circ a=\Pi(ba), (\Pi a)\circ(\Pi b)=0, a, b\in A,\]
where $\pi : A\otimes A\to A$ is an odd bilinear super-skew symmetric map. The super-skew symmetry means that $\pi(a, a)=0, \pi(b, c)=(-1)^{|b||c|}\pi(c, b), a\in A_1, b, c\in A$. If $2$ is invertible in $A$, then the first condition is redundant. Note also that the regular element $y$ can be identified with 
$\Pi 1$.
\begin{lm}\label{cocycle}
The map $\pi$ defines a super-ring structure on $A\oplus\Pi A$ if and only if for any $a, b, c\in A$ the following conditions hold :
\begin{enumerate}
\item $\pi(1, a)=0$;
\item $\pi(ab, c)-\pi(a, bc)+\pi(a, b)c-(-1)^{|a|}a\pi(b, c)=0$.
\end{enumerate}
\end{lm}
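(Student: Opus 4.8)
The plan is to verify directly that the multiplication formula for $a\circ b$, $a\circ\Pi b$, $(\Pi b)\circ a$, $(\Pi a)\circ(\Pi b)$ defines a (super-commutative, associative, unital) super-ring structure on $A\oplus\Pi A$, and to track exactly which constraints on $\pi$ this imposes. Super-commutativity of the proposed product is already built into the super-skew symmetry hypothesis on $\pi$ together with the evenness/oddness of the four defining formulas, so the real content is associativity and the existence of a unit; these are what conditions (1) and (2) should encode. Concretely, I would write an arbitrary element of $A\oplus\Pi A$ as $a+\Pi a'$ and reduce, by bilinearity and the fact that $\Pi$ is additive, the associativity identity to its "pure" cases where each of the three arguments is either in $A$ or in $\Pi A$. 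Because $(\Pi a)\circ(\Pi b)=0$, any triple product with two or more $\Pi$-arguments vanishes on both sides automatically (using that $yR=\Pi A$ is a square-zero ideal), so only two essentially distinct cases remain.

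The first case is the associativity of three even elements: compute $(a\circ b)\circ c$ and $a\circ(b\circ c)$ for $a,b,c\in A$. The even parts agree since $A$ is associative, and the odd parts give, after expanding $a\circ b = ab+\Pi\pi(a,b)$ and using the rules $(\Pi x)\circ c=\Pi(xc)$ and $a\circ\Pi x=(-1)^{|a|}\Pi(ax)$, precisely the cocycle-type identity
\[
\pi(ab,c)-\pi(a,bc)+\pi(a,b)c-(-1)^{|a|}a\pi(b,c)=0,
\]
which is condition (2). The second case is associativity of a triple with exactly one $\Pi$-argument, e.g. $(a\circ b)\circ\Pi c$ versus $a\circ(b\circ\Pi c)$, and the symmetric placements; here one checks that the $\pi$-terms and the sign bookkeeping cancel using (2) again (or a shifted instance of it), so no new relation appears. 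The unit of $A$ remains a two-sided unit of $A\oplus\Pi A$ iff $1\circ a = a$ and $1\circ\Pi a=\Pi a$, and expanding $1\circ a=a+\Pi\pi(1,a)$ shows this is equivalent to $\pi(1,a)=0$, which is condition (1); super-skew symmetry then gives $\pi(a,1)=0$ as well, so the left unit is automatically a right unit.

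Conversely, if conditions (1) and (2) hold then all the cases above check out and $A\oplus\Pi A$ is a genuine super-ring with $y=\Pi 1$ odd regular and $R/Ry\simeq A$. I expect the main obstacle to be purely organizational rather than conceptual: keeping the Koszul signs straight across the eight or so sub-cases of the associativity expansion, particularly distinguishing $a\in A_0$ from $a\in A_1$ in the formula $a\circ\Pi b=(-1)^{|a|}\Pi(ab)$, and making sure that the "mixed" cases with one $\Pi$-argument really do follow from (2) without forcing an extra identity. A clean way to organize this is to observe that $\pi$ is nothing but a normalized Hochschild $2$-cochain of $A$ with values in $\Pi A$ (with the shifted bimodule action indicated before the lemma), so that (2) is exactly the statement $\delta\pi=0$ in the appropriate superized Hochschild complex and (1) is the normalization; this is presumably the viewpoint the section develops, and it makes the equivalence essentially a definitional unwinding once the sign conventions for the differential are fixed.
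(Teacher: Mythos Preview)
Your proposal is correct and follows essentially the same approach as the paper: compute $(a\circ b)\circ c$ and $a\circ(b\circ c)$ for $a,b,c\in A$, match the $\Pi$-components to obtain condition (2), and read off condition (1) from $1\circ a=a$. One small inaccuracy: in the mixed cases with exactly one $\Pi$-argument, the $\pi$-terms actually vanish outright (since any product involving two $\Pi$-factors is zero), so associativity there reduces to associativity of $A$ itself and does not use (2) at all; your conclusion that ``no new relation appears'' is correct, but not for the reason you suggest.
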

\begin{proof}
Since $1\circ a=a$, the first condition obviously follows. For the second condition just compare
\[(a\circ b)\circ c=(ab)c+\Pi(\pi(ab, c)+\pi(a, b)c)\]
and
\[a\circ(b\circ c)=a(bc)+\Pi(\pi(a, bc)+(-1)^{|a|}a\pi(b, c)).\]
\end{proof}
Assume that $A$ is arbitrary (not necessary super-commutative) super-ring. Let $M$ be  an $A$-superbimodule. 

We define a \emph{superized Hochschild cochain complex} (compare with \cite{weib}, p.301)
\[\tilde{C}^n(A, M)=\mathrm{Hom}_{\mathbb{Z}}(A^{\otimes (n+1)}, M), n\geq 0,\] with coboundary maps
\[\delta_n(f)(a_0, \ldots, a_{n+1})=\sum_{0\leq i\leq n}(-1)^i f(a_0, \ldots, a_i a_{i+1}, \ldots, a_{n+1})\]
\[-(-1)^{|f||a_0|}a_0f(a_1, \ldots, a_{n+1})+(-1)^{n+1}f(a_0, \ldots, a_n)a_{n+1}, f\in \tilde{C}^n(A, M), n\geq 0.\]
Note that each $\delta_n$ is graded with respect to the natural $\mathbb{Z}_2$-grading of $\mathbb{Z}$-supermodule $\tilde{C}^n(A, M)$.
Thus the cohomology groups of this complex, denoted by $SHH^n(A, M)$, are naturally $\mathbb{Z}_2$-graded. 
\begin{lm}\label{cochain complex}
For any $n\geq 0$ we have $\delta_{n+1}\delta_n=0$.
\end{lm}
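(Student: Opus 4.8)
The plan is to verify $\delta_{n+1}\delta_n = 0$ by the standard computation familiar from ordinary Hochschild cohomology, being careful only about the extra signs produced by the parity $|f|$ of the cochain and by the Koszul sign rule when odd elements are permuted past each other. Since $\delta_n$ is parity-preserving on the $\mathbb{Z}_2$-graded module $\tilde C^n(A,M)$, in the expression $\delta_{n+1}(\delta_n(f))$ the sign $(-1)^{|\delta_n f||a_0|}$ equals $(-1)^{|f||a_0|}$, so no surprises arise from grading of the complex itself; the only genuine bookkeeping is the sign $(-1)^{|f||a_0|}$ attached to the ``outer left multiplication'' term, which interacts with the inner left multiplications.

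First I would write out $\delta_{n+1}\delta_n(f)(a_0,\dots,a_{n+2})$ as a sum of three blocks coming from the three types of terms in $\delta_{n+1}$: (A) the ``face'' terms $\sum_{0\le j\le n+1}(-1)^j (\delta_n f)(a_0,\dots,a_j a_{j+1},\dots,a_{n+2})$; (B) the outer-left term $-(-1)^{|\delta_n f||a_0|}a_0\,(\delta_n f)(a_1,\dots,a_{n+2})$; and (C) the outer-right term $(-1)^{n+2}(\delta_n f)(a_0,\dots,a_{n+1})a_{n+2}$. Then I would expand each $(\delta_n f)(\cdots)$ using the definition, again splitting into face/left/right contributions, so that $\delta_{n+1}\delta_n(f)$ becomes a sum of roughly $(n+3)^2$ terms indexed by ordered pairs of ``operations.''

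The core of the argument is the usual cancellation-in-pairs. Terms where two non-adjacent multiplications $a_ia_{i+1}$ and $a_ja_{j+1}$ are performed appear twice with opposite signs (the classical identity $(-1)^i(-1)^{j-1}+(-1)^{j}(-1)^i=0$ for $i<j$), because whether we collapse the left pair or the right pair first, the surviving sign differs by exactly $(-1)$; this uses only commutativity of the two collapses and no odd signs, since the collapses do not move elements past one another. Terms where one face operation is adjacent to the collapsed pair at the ends, i.e. involving $a_0a_1$ together with the outer-left multiplication by $a_0$, or $a_{n+1}a_{n+2}$ together with the outer-right multiplication by $a_{n+2}$, cancel against the corresponding ``repeated face'' terms from block (A) with $j=0$ or $j=n+1$; here I must check that the outer-left sign $-(-1)^{|f||a_0|}$ produced after the inner $\delta_n$ matches the sign that multiplying $a_0$ into the face $a_0a_1$ produces, which it does precisely because left multiplication by $a_0$ is how $|f||a_0|$ enters in both computations. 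Finally the two outer-left (resp. outer-right) multiplications compose consistently, and the single mixed term ``outer-left by $a_0$'' against ``outer-right by $a_{n+2}$'' appears twice (once from each order) with opposite sign $(-1)^{n+2}(-1)\cdot(-1)^{|f||a_0|}$ versus the reverse, and cancels.

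The main obstacle I anticipate is purely the sign discipline at the two ends: confirming that the factor $(-1)^{|f||a_0|}$ threads correctly through the nested application, in particular that $(-1)^{|\delta_n f||a_0|}=(-1)^{|f||a_0|}$ and that when the outer $\delta_{n+1}$ multiplies $a_0$ on the left of a face term that itself already carries a left-$a_0$, the accumulated sign is $(-1)^{2|f||a_0|}=1$ as needed for cancellation, while no Koszul sign is incurred because $a_0$ never jumps over another argument. Everything in the interior is sign-for-sign identical to the classical Hochschild computation in \cite{weib}, so once the two boundary interactions are checked the identity $\delta_{n+1}\delta_n=0$ follows. I would present the computation compactly by grouping the $\approx(n+3)^2$ terms into the four families above and exhibiting the cancelling partner for each.
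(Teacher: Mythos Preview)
Your proposal is correct and follows essentially the same route as the paper: write out $\delta_{n+1}(\delta_n f)(a_0,\dots,a_{n+2})$, expand each $\delta_n f$ inside, and cancel the resulting terms in pairs, with the only nonclassical bookkeeping being the sign $(-1)^{|f||a_0|}$ on the left-action terms (using that $|\delta_n f|=|f|$). The paper organises the cancellation slightly differently---it first shows that all double-face and triple-product contributions inside block (A) vanish, then rewrites the surviving boundary contributions of (A) as exactly $(-1)^{|a_0||f|}a_0\,\delta_n(f)(a_1,\dots,a_{n+2})+(-1)^{n+1}\delta_n(f)(a_0,\dots,a_{n+1})a_{n+2}$, which cancels (B) and (C) in one stroke---but this is the same computation packaged differently.
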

\begin{proof}
There is
\[\delta_{n+1}(\delta_n(f))(a_0, \ldots, a_{n+2})=\sum_{0\leq i\leq n+1}(-1)^i\delta_n(f)(a_0, \ldots, a_i a_{i+1}, \ldots, a_{n+2})\]
\[-(-1)^{|f||a_0|}a_0\delta_n(f)(a_1, \ldots, a_{n+2})+(-1)^{n+2}\delta_n(f)(a_0, \ldots, a_{n+1})a_{n+2}.\]
In the sum
\[\sum_{0\leq i\leq n+1}(-1)^i\delta_n(f)(a_0, \ldots, a_i a_{i+1}, \ldots, a_{n+2})\]
each summand $f(a_0, \ldots , a_i a_{i+1}, \ldots, a_j a_{j+1}, \ldots, a_{n+2})$ appears two times, once with the sign $(-1)^{i+j-1}$ and once with the sign $(-1)^{i+j}$, hence they are cancelled. Similarly, a summand
$f(a_0, \ldots, a_i a_{i+1}a_{i+2}, \ldots, a_{n+2})$ appears two times, once with the sign $(-1)^{2i}$ and once with the sign $(-1)^{2i+1}$, hence cancelled as well. Thus
\[\sum_{0\leq i\leq n+1}(-1)^i\delta_n(f)(a_0, \ldots, a_i a_{i+1}, \ldots, a_{n+2})=\]\[-(-1)^{|a_0 a_1||f|}a_0 a_1 f(a_2, \ldots, a_{n+2})+(-1)^{n+1}f(a_0a_1, \ldots , a_{n+1})a_{n+2}\]\[-\sum_{0< i  <n+1}(-1)^{i+|a_0||f|}a_0 f(a_1, \ldots, a_i a_{i+1}, \ldots, a_{n+2})\]\[+(-1)^{n+1}\sum_{0< i< n+1}(-1)^i f(a_0, \ldots, a_i a_{i+1}, \ldots, a_{n+1})a_{n+2}\]
\[-(-1)^{n+1+|a_0||f|}a_0f(a_1, \ldots, a_{n+1}a_{n+2})+f(a_0, \ldots, a_n)a_{n+1}a_{n+2}=\]
\[(-1)^{|a_0||f|}a_0[-(-1)^{|a_1||f|}a_1f(a_2, \ldots , a_{n+2})+\sum_{0< i\leq n+1}(-1)^{i+1}f(a_1, \ldots, a_i a_{i+1}, \ldots, a_{n+2})]\]
\[+(-1)^{n+1}[\sum_{0\leq i < n+1}(-1)^i f(a_0, \ldots, a_i a_{i+1}, \ldots, a_{n+1})+(-1)^{n+1}f(a_0, \ldots, a_n)a_{n+1}]a_{n+2}=\]
\[(-1)^{|a_0||f|}a_0[\delta_n(f)(a_1, \ldots, a_{n+2})-(-1)^{n+1}f(a_1, \ldots, a_{n+1})a_{n+2}]\]
\[+(-1)^{n+1}[\delta_n(f)(a_0, \ldots, a_{n+1})+(-1)^{|a_0||f|}f(a_1, \ldots, a_{n+1})a_{n+2}]=\]
\[(-1)^{|a_0||f|}\delta_n(f)(a_1, \ldots, a_{n+2})+(-1)^{n+1}\delta_n(f)(a_0, \ldots, a_{n+1})a_{n+2}.\]
Lemma is proven.
\end{proof}
From now on $A$ is again a super-commutative super-ring and $M$ is a left $A$-supermodule, regarded as a right $A$-supermodule via the right action \[ma=(-1)^{|a||m|}am, a\in A, m\in M .\]

Let $C^n(A, M)$ denote the $\mathbb{Z}$-supersubmodule of $\tilde{C}^n(A, M)$ consisting of all $f\in \tilde{C}^n(A, M)$, such that for any $a_0, \ldots , a_n\in A$ there is
\begin{enumerate}
\item $f(1, a_1, \ldots, a_n)=0$;
\item $f(a_n, a_{n-1}, \ldots,  a_0)=(-1)^{\frac{n(n-1)}{2}+\sum_{0\leq i< j \leq n} |a_i||a_j|}f(a_0, \ldots , a_n)$;
\item if $2$ is not invertible in $A$, then $f(a, \ldots, a)=0$, provided $n\geq 1$ and $a$ is odd. 
\end{enumerate}
\begin{lm}\label{a subcomplex}
If $A$ is a super-commutative super-ring, then for any $n\geq 0$ we have $\delta_n(C^n(A, M))\subseteq C^{n+1}(A, M)$, that is $\{C^n(A, M)\}_{n\geq 0}$ is a subcomplex of the complex $\{\tilde{C}^n(A, M)\}_{n\geq 0}$.
\end{lm}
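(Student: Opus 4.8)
The plan is to check directly that each of the three defining conditions of $C^{n+1}(A,M)$ is inherited by $\delta_n(f)$ whenever $f\in C^n(A,M)$. Condition (1) is immediate: setting $a_0=1$ in the formula for $\delta_n(f)(a_0,\dots,a_{n+1})$, the summand with $i=0$ equals $f(a_1,\dots,a_{n+1})$, the term $-(-1)^{|f||a_0|}a_0f(a_1,\dots,a_{n+1})$ equals $-f(a_1,\dots,a_{n+1})$ since $|1|=0$, and every remaining term has $1$ in its leftmost slot and so vanishes by condition (1) for $f$; hence $\delta_n(f)(1,a_1,\dots,a_{n+1})=0$.

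The substantive point is condition (2), the reversal symmetry. I would write the reversed tuple as $(b_0,\dots,b_{n+1})$ with $b_k=a_{n+1-k}$, apply the formula for $\delta_n(f)$ to it, and group the result into the $n+1$ ``contraction'' terms $(-1)^i f(b_0,\dots,b_ib_{i+1},\dots,b_{n+1})$ and the two ``edge'' terms $-(-1)^{|f||b_0|}b_0f(b_1,\dots,b_{n+1})$ and $(-1)^{n+1}f(b_0,\dots,b_n)b_{n+1}$. For a contraction term: (a) use super-commutativity of $A$ to rewrite the merged product $b_ib_{i+1}=a_{n+1-i}a_{n-i}$ as $(-1)^{|a_{n-i}||a_{n+1-i}|}a_{n-i}a_{n+1-i}$; (b) note that the resulting argument tuple is the reversal of the tuple in the $j=(n-i)$ contraction term of $\delta_n(f)(a_0,\dots,a_{n+1})$, which is a cochain in $n+1$ arguments, and apply condition (2) for $f$; (c) observe that merging two adjacent slots decreases the quadratic parity sum by exactly $|a_{n-i}||a_{n+1-i}|$, so the super-commutativity sign from (a) cancels this change. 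What remains is the arithmetic identity $\tfrac{(n+1)n}{2}-\tfrac{n(n-1)}{2}=n$ together with $2(n-i)\equiv0\pmod2$, which show that the $i$-th contraction term of $\delta_n(f)$ at the reversed tuple equals $(-1)^{\frac{(n+1)n}{2}+\sum_{0\le p<q\le n+1}|a_p||a_q|}$ times the $(n-i)$-th contraction term of $\delta_n(f)(a_0,\dots,a_{n+1})$. For the two edge terms one argues the same way: unreverse $f(a_n,\dots,a_0)$ and $f(a_{n+1},\dots,a_1)$ by condition (2) for $f$, then use the right-module sign rule $ma=(-1)^{|a||m|}am$ and $|f(a_0,\dots,a_n)|=|f|+\sum_k|a_k|$ to carry the stray factor ($a_{n+1}$, resp.\ $a_0$) across; the two edge terms are exchanged, once more up to the same global sign $(-1)^{\frac{(n+1)n}{2}+\sum_{0\le p<q\le n+1}|a_p||a_q|}$. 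Adding all terms yields condition (2) for $\delta_n(f)$.

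For condition (3), suppose $2$ is not invertible in $A$, let $a\in A_1$, and evaluate $\delta_n(f)$ at $n+2$ copies of $a$. Since $a^2=0$ in a super-commutative ring, every contraction term contains a zero slot and vanishes, leaving $-(-1)^{|f||a|}af(a,\dots,a)+(-1)^{n+1}f(a,\dots,a)a$ with $n+1$ copies of $a$ inside $f$. If $n\ge1$ both terms vanish by condition (3) for $f$; if $n=0$, then $f(a)a=(-1)^{|a||f(a)|}af(a)=-(-1)^{|f||a|}af(a)$ shows the two terms cancel. Hence $\delta_n(f)(a,\dots,a)=0$ and $\delta_n(f)\in C^{n+1}(A,M)$.

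The crux is the sign bookkeeping in condition (2): one must correctly combine (i) the reversal sign $\tfrac{n(n-1)}{2}$ attached to $f$ (a cochain in $n+1$ arguments) against the sign $\tfrac{(n+1)n}{2}$ demanded of $\delta_n(f)$ (a cochain in $n+2$ arguments), (ii) the super-commutativity signs produced by reordering the merged products, (iii) the change of the quadratic parity form under merging adjacent slots, and (iv) the left/right module signs for the edge terms; in particular the reindexing $i\leftrightarrow n-i$ of the contraction sum must be paired with the right sign. Each ingredient is elementary, but their interaction — and the verification that everything collapses to the single sign $(-1)^{\frac{(n+1)n}{2}+\sum_{0\le p<q\le n+1}|a_p||a_q|}$ — is the only real work.
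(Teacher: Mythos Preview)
Your proof is correct and follows essentially the same route as the paper: a direct verification of conditions (1), (2), (3) for $\delta_n(f)$, with the reversal symmetry handled by matching the $i$-th contraction term of $\delta_n(f)$ at the reversed tuple against the $(n-i)$-th term at the original tuple and swapping the two edge terms. The paper's write-up is much terser (it simply asserts the identity $\delta_n(f)(a_{n+1},\dots,a_0)=(-1)^{\epsilon_n+d_{n+1}+n}\delta_n(f)(a_0,\dots,a_{n+1})$ after displaying the expansion), whereas you spell out the sign accounting explicitly; but the substance is identical.
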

\begin{proof}
Let $d_n$ and $\epsilon_n$ denote $\sum_{0\leq i< j\leq n}|a_i||a_j|$ and $\frac{n(n-1)}{2}$ respectively. For arbitrary $f\in C^n(A, M)$ we have
\[\delta_n(f)(a_{n+1}, \ldots, a_0)=\sum_{0\leq i\leq n}(-1)^i f(a_{n+1}, \ldots, a_{n+1-i} a_{n-i}, \ldots, a_0)\]
\[-(-1)^{|f||a_{n+1}|}a_{n+1}f(a_n, \ldots, a_0)+(-1)^{n+1}f(a_{n+1}, \ldots, a_1)a_0 =\]
\[(-1)^{\epsilon_n+d_{n+1}+n}\delta_n(f)(a_0, \ldots, a_{n+1}). \]
Similarly, there is
\[\delta_n(f)(1, a_1, \ldots, a_{n+1})=f(a_1, \ldots, a_{n+1})-f(a_1, \ldots, a_{n+1})=0.\]
It is clear that $\delta_n(f)$ atisies the condition $(3)$, whenever $n\geq 1$. If $n=0$, then
\[ \delta_0(f)(a, a)=f(a^2)-(-1)^{|f|}af(a)-f(a)a=0 \]
for $a\in A_1$.
Lemma is proven.
\end{proof}
The cohomolgy groups of the complex $\{C^n(A, M)\}_{n\geq 0}$ are denoted by $SH^n(A, M)$. 

The statement of Lemma \ref{cocycle} can be formulated so that $\pi(a, b)$ defines a super-ring structure on $A\oplus\Pi A$ if and only if $\pi$ is an odd cocycle from $C^1(A, A)$. The resulting super-ring is denoted by
$A_{\pi}$. 

Let $R$ and $R'$ are super-rings with regular elements $y$ and $y'$ respectively. We say that an isomorphism of super-rings $\phi : R\to R'$ is \emph{adapted} to $y$ and $y'$, whenever $\phi(y)=y'$ and
the induced super-ring morphism $R/Ry\to R'/R'y'$ is an isomorphism. If it the case, then $R$ and $R'$ are said to be \emph{adaptively} isomorphic. If we denote $R/Ry\simeq R'/R'y'$ by $A$, then there are cocycles $\pi, \pi' \in C^1(A, A)$ such that $R$ and $R'$ can be identified with $A_{\pi}$ and $A_{\pi'}$
respectively.  
\begin{pr}\label{H^1 and adaptively isomorphic super-rings}
In the above notations, $R$ is adaptively isomorphic to $R'$ if and only if the cosets of $\pi$ and $\pi'$
in $SH^1(A, A)$ are the same. Moreover, 
the exact sequence 
\[0\to Ry\to R\to A\to 0\]
is split if and only if the coset of $\pi$ is zero.
\end{pr}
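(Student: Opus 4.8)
The plan is to unwind the definition of "adapted isomorphism" into the language of the cochain complex $\{C^n(A,A)\}$ and show that it matches the coboundary relation $\pi - \pi' \in \delta_0(C^0(A,A))$. First I would describe the general form of an adapted isomorphism $\phi : A_\pi \to A_{\pi'}$. Since $\phi$ must fix $y = \Pi 1$ and induce the identity on $A = R/Ry$, for $a \in A$ we must have $\phi(a) = a + \Pi h(a)$ for some $\mathbb{Z}$-linear map $h : A \to A$, and $\phi(\Pi a) = \Pi a$ (this last equality because $\phi(\Pi a) = \phi(a \circ \Pi 1) = \phi(a) \circ' \Pi 1 = (a + \Pi h(a)) \circ' \Pi 1 = \Pi a$, using the multiplication formulas for $A_{\pi'}$ and $(\Pi h(a)) \circ' \Pi 1 = 0$). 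For $\phi$ to be even and to respect the unit we need $h$ to be odd and $h(1) = 0$, i.e. $h \in C^0(A,A)_1$ — note $C^0$ imposes only the condition $f(1) = 0$ (condition (3) is vacuous at $n=0$ for non-odd... actually one checks $\delta_0$ handles the $n=0$ instance of (3) automatically, as in Lemma \ref{a subcomplex}).

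Next I would impose multiplicativity $\phi(a \circ b) = \phi(a) \circ' \phi(b)$ and read off the constraint on $h$. Expanding the left side gives $\phi(ab + \Pi\pi(a,b)) = ab + \Pi h(ab) + \Pi\pi(a,b)$, and the right side gives $(a + \Pi h(a)) \circ' (b + \Pi h(b)) = ab + \Pi\pi'(a,b) + \Pi(h(a)b) + (-1)^{|a|}\Pi(a h(b))$, using the $A_{\pi'}$-multiplication rules and killing the $\Pi \cdot \Pi$ term. Comparing $\Pi$-components:
\[
h(ab) + \pi(a,b) = \pi'(a,b) + h(a)b + (-1)^{|a|} a h(b),
\]
that is, $\pi'(a,b) - \pi(a,b) = h(ab) - h(a)b - (-1)^{|a|}a h(b)$. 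Comparing with the coboundary formula, $\delta_0(h)(a,b) = -(-1)^{|h||a|} a h(b) + f(a)b$ restricted appropriately — here $|h| = 1$ so $-(-1)^{|a|}a h(b)$, and one sees $\delta_0(h)(a,b) = h(ab) - (-1)^{|a|} a h(b) - h(a) b$ up to a harmless overall sign, whence $\pi - \pi' = \pm\delta_0(h)$. Conversely any odd $h \in C^0(A,A)$ with $h(1)=0$ produces via $\phi(a) = a + \Pi h(a)$, $\phi(\Pi a) = \Pi a$ a well-defined adapted isomorphism $A_{\pi + \delta_0 h} \to A_\pi$ (bijectivity is clear since $\phi$ is unitriangular). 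This establishes that $R \cong R'$ adaptively $\iff [\pi] = [\pi']$ in $SH^1(A,A)$.

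For the last sentence: the exact sequence $0 \to Ry \to R \to A \to 0$ splits as super-rings precisely when there is a super-ring section $A \to R$, which composed with $\phi$ amounts to asking $A_\pi$ to be adaptively isomorphic to $A_0$ (the split extension, with multiplication $a \circ b = ab$, $a \circ \Pi b = (-1)^{|a|}\Pi(ab)$, etc.), and by what was just proved this happens iff $[\pi] = [0]$ in $SH^1(A,A)$, i.e. $\pi$ is an odd coboundary. The main obstacle I anticipate is purely bookkeeping: tracking the signs coming from the parity of $h$ and of $\pi$ through the multiplication formulas, and checking that the map $h$ one obtains genuinely lands in $C^0(A,A)$ and that the resulting $\phi$ is a super-ring isomorphism and not merely a linear isomorphism (in particular that super-commutativity of $A_{\pi'}$ is preserved, which follows since $\pi'$ is already super-skew-symmetric by virtue of lying in $C^1$). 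None of this is conceptually deep, but the sign conventions in $\delta_0$ must be matched to those in the multiplication $\circ$ exactly.
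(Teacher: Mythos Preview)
Your proposal is correct and follows essentially the same route as the paper: write an adapted isomorphism as $a\mapsto a+\Pi h(a)$, $\Pi a\mapsto\Pi a$ for an odd $h\in C^0(A,A)$, expand $\phi(a\circ b)=\phi(a)\circ'\phi(b)$, and read off $\pi'-\pi=\delta_0(h)$. Your hedging about ``a harmless overall sign'' is unnecessary: with the paper's convention $\delta_0(h)(a,b)=h(ab)-(-1)^{|h||a|}a\,h(b)-h(a)b$ and $|h|=1$, the identity $\pi'-\pi=\delta_0(h)$ holds on the nose (so $\pi-\pi'=\delta_0(-h)$, and either way the cohomology classes agree). One tiny slip: $a\circ\Pi 1=(-1)^{|a|}\Pi a$, not $\Pi a$; the cleaner way to justify $\phi(\Pi a)=\Pi a$ is via $\Pi a=(\Pi 1)\circ a$, but your conclusion is unaffected.
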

\begin{proof}
Recall that $R$ and $R'$ are isomorphic to $A\oplus\Pi A$ as $\mathbb{Z}$-supermodules. Let $\phi : R\to R'$ be an adapted isomorphism. It can be defined by 
\[a\mapsto a+\Pi f(a), \Pi a\mapsto \Pi a, a\in A,\]
where $f\in C^0(A)_1$. Then $\phi$ is a super-ring morphism if and only if 
\[\phi(a\circ b)=\phi(ab +\Pi\pi(a, b))=ab +\Pi(f(ab)+\pi(a, b))=
\phi(a)\circ\phi(b)=\]\[ab +\Pi(\pi'(a, b)+f(a)b+(-1)^{|f||a|}af(b))\]
for any $a, b\in A$, hence if and only if $\pi'-\pi=\delta_0(f)$. The proof of the second statement is similar and we leave it for the reader. Proposition is proven.
\end{proof}
In the rest part of this section we construct an Artinian super-ring $R$ with regular element $y\in R$ such that $\mathrm{sdim}_1(R/Ry)<\mathrm{sdim}_1(R)-1$. 

Let $B$ be an Artinian ring. Consider the super-ring $A'=B[Y_1, Y_2, Y_3, Y_4]/I'$, where the super-ideal $I'$ is generated by the elements $Y_{i_1}Y_{i_2}Y_{i_3}, 1\leq i_1<i_2<i_3\leq 4$. It is clear that
$A'$ is an Artinian $B$-superalgebra and $\mathrm{sdim}_1(A')=2$. 

Let $A$ be an $A'$-superalgebra. Choose the elements $t_{i_1 i_2 i_3}\in A_0, 1\leq i_1, i_2, i_3\leq 4$, such that 
$t_{i_{\sigma(1)} i_{\sigma(2)} i_{\sigma(3)}}=(-1)^{\sigma}t_{i_1 i_2 i_3}$ for any substitution $\sigma$ and $t_{i_1 i_2 i_3}=0$ if  one of the indices $i_1, i_2, i_3$ coincides with the other. 
Define a $B$-bilinear map $\pi' : A'\otimes A'\to A$ by 
\begin{enumerate}
\item[(a)] $\pi'(1, A')=\pi'(A', 1)=0$; 
\item[(b)] $\pi'(Y_i, Y_j)=0, 1\leq i, j\leq 4$;
\item[(c)] $\pi'(Y_i Y_j, Y_k)=\pi'(Y_k, Y_i Y_j)=t_{i j k}, 1\leq i, j, k\leq 4$;
\item[(d)] $\pi'(Y_i Y_j, Y_s Y_k)=-t_{s k j}Y_i, 1\leq i\neq j, k\neq s\leq 4$.	
\end{enumerate}	  
\begin{lm}\label{when pi' is a cocylce}
The map $\pi'$ is a cocycle from $C^1(A', A)$ if and only if 
\begin{enumerate}
\item[$(\star)$]  $t_{s k j}Y_i+t_{s k i}Y_j=0, 1\leq i, j, k, s\leq 4$;
\item[$(\star\star)$] $t_{s k i}Y_i=0, 1\leq i, s, k\leq 4$.
\end{enumerate}
\end{lm}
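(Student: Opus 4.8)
The plan is to reduce the statement to a finite bookkeeping problem on a $B$-basis of $A'$, and then read the constraints off term by term. Since $B[Y_1,Y_2,Y_3,Y_4]$ is free super-commutative on four odd generators and $I'$ is generated by the degree-three monomials, the $B$-module $A'$ is free with basis $1$, the $Y_i$ ($1\le i\le 4$), and the $Y_iY_j$ ($1\le i<j\le 4$); in particular $A'_1=\bigoplus_i BY_i$ and $Y_iY_jY_k=0$ in $A'$, hence also in $A$. Because $\pi'$ is $B$-bilinear it is determined by its values on pairs of these basis elements, so all the requirements — membership of $\pi'$ in $C^1(A',A)$ (the conditions $(1)$–$(3)$ of that definition) together with the cocycle identity $\delta_1(\pi')=0$ (Lemma~\ref{cocycle}) — turn into finitely many identities among the chosen elements $t_{i_1i_2i_3}\in A_0$, which I will list and simplify.

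First I would dispose of the easy conditions. Condition $(1)$ of the definition of $C^1(A',A)$, namely $\pi'(1,-)=\pi'(-,1)=0$, is exactly item $(a)$. Condition $(3)$ is automatic, since every odd element of $A'$ is a $B$-combination of the $Y_i$ and $\pi'(Y_i,Y_j)=0$ by $(b)$, so $\pi'(a,a)=0$ for $a\in A'_1$. For the skew-symmetry condition $(2)$, which for $n=1$ reads $\pi'(a_1,a_0)=(-1)^{|a_0||a_1|}\pi'(a_0,a_1)$, the only basis pair for which it is not immediate from $(a)$–$(c)$ is $(Y_iY_j,Y_sY_k)$ with both arguments even; applying $(d)$ to both sides it becomes the identity $t_{skj}Y_i=t_{ijk}Y_s$ for all indices. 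Specializing $s$ to $j$ or to $k$, and using that $t$ vanishes when two of its indices agree, this forces $t_{ijk}Y_k=0$, i.e.\ $(\star\star)$; conversely $(\star)$ and $(\star\star)$ give this identity back, as I explain in the last paragraph.

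Next comes the cocycle identity. By trilinearity it suffices to evaluate $\delta_1(\pi')$ on basis triples $(a_0,a_1,a_2)$. Any triple containing a factor $1$ contributes $0$, since $\pi'$ is normalized. If each $a_\ell$ is a $Y_i$ or a $Y_iY_j$, then in $\delta_1(\pi')(a_0,a_1,a_2)=\pi'(a_0a_1,a_2)-\pi'(a_0,a_1a_2)-(-1)^{|a_0|}a_0\pi'(a_1,a_2)+\pi'(a_0,a_1)a_2$ every term in which three or more of the generators $Y_i$ get multiplied vanishes, because $Y_iY_jY_k=0$; this kills most terms. Sorting the surviving triples by the pattern of degrees of $(a_0,a_1,a_2)$ I would find: the patterns $(1,1,1)$ and $(1,1,2)$ give identities that are automatically satisfied (after using the antisymmetry of $t$ and $\pi'(Y_i,Y_j)=0$); the patterns $(2,1,1)$ and $(1,2,1)$ give precisely the relation $(\star)$ (up to relabelling with the antisymmetry of $t$); and the patterns of total degree $\ge 5$ reduce to products of the form $t_{ijk}Y_lY_m$, which vanish once $(\star\star)$ is known, because $(\star\star)$ together with the antisymmetry of $t$ forces $t_{ijk}Y_w=0$ whenever $w\in\{i,j,k\}$, and with only four indices available any product $t_{ijk}Y_lY_m$ then vanishes. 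Thus $\delta_1(\pi')=0$ together with $\pi'\in C^1(A',A)$ is implied by $(\star)$ and $(\star\star)$, and conversely forces them.

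The one point that still needs care — and the main bookkeeping obstacle — is showing that $(\star)$ and $(\star\star)$ imply the full skew-symmetry identity $t_{skj}Y_i=t_{ijk}Y_s$ for arbitrary (not just partly coinciding) indices. For this I would observe that $(\star)$ says exactly that the expression $T_{abcd}:=t_{abc}Y_d$ is antisymmetric under the transposition of its last two slots, while the prescribed antisymmetry of $t$ makes $T$ antisymmetric under the transpositions of slots $(1,2)$ and $(2,3)$; since these three adjacent transpositions generate $S_4$ and the sign is a well-defined homomorphism, $T$ is alternating under all of $S_4$. The identity $t_{skj}Y_i=t_{ijk}Y_s$ is then $T_{skji}=T_{ijks}$, and the permutation taking $(s,k,j,i)$ to $(i,j,k,s)$ is the reversal, which is even, so the identity holds. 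One must be slightly careful here: alternation of $T$ only yields $2T=0$ when two indices coincide, so it does not by itself deliver $(\star\star)$ — which is precisely why $(\star\star)$ has to be recorded as a separate condition rather than being absorbed into $(\star)$ — but for the case of four distinct indices it is exactly what is needed. The remainder is routine sign-checking in the formula for $\delta_1$, carried out pattern by pattern.
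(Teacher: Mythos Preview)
Your argument is correct and follows the same overall scheme as the paper --- reduce everything to finitely many identities on the $B$-basis $\{1,\,Y_i,\,Y_iY_j\}$ and sort through the cases --- but the bookkeeping is organized differently. The paper extracts $(\star)$ and $(\star\star)$ right at the outset from the \emph{consistency} of rule~(d): since (d) is stated for all $i\neq j$, $s\neq k$ (not just ordered pairs), the identities $\pi'(Y_iY_j,Y_sY_k)=-\pi'(Y_jY_i,Y_sY_k)$ and $\pi'(Y_i^2,Y_sY_k)=0$ force $(\star)$ and $(\star\star)$; only afterwards does the paper verify skew-symmetry and Lemma~\ref{cocycle}(2) case by case. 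You instead take $\pi'$ as defined on ordered basis pairs and read $(\star)$ off the $(1,2,1)$/$(2,1,1)$ cocycle cases and $(\star\star)$ off the skew-symmetry identity. Your $S_4$-alternation argument for the skew-symmetry $t_{skj}Y_i=t_{ijk}Y_s$ is a clean replacement for the paper's explicit chain of substitutions, and your pigeonhole remark (with only four indices, any $t_{ijk}Y_lY_m$ must have $\{l,m\}\cap\{i,j,k\}\neq\emptyset$, hence vanishes by $(\star\star)$) disposes of the degree-$\ge 5$ cocycle patterns more uniformly than the paper's case list.

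One small slip: in deriving $(\star\star)$ you propose specializing $s$ to $k$ in $t_{skj}Y_i=t_{ijk}Y_s$, but that identity is only available for $s\neq k$ (since (d) is). The specialization $s=j$ is legitimate and gives $t_{ijk}Y_j=0$; to reach the stated form $t_{ski}Y_i=0$ you then need one use of the antisymmetry of $t$ (e.g.\ $t_{ski}Y_i=-t_{sik}Y_i=0$), or alternatively observe that $(\star\star)$ in fact already follows from $(\star)$ by setting $j=k$ there. Either way the conclusion stands.
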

\begin{proof}
The condition $(\star)$ and $(\star\star)$ are derived from (d) and the eccential  equations \[\pi'(Y_i Y_j, Y_s Y_k)=-\pi'(Y_j Y_i, Y_sY_k), \ \pi'(Y_i^2, Y_s Y_k)=0.\]
The super-skew symmetricity of $\pi'$ is equivalent to the condition
\[ t_{i j k}Y_s=t_{s k j}Y_i.\]
Using $(\star)$ and the definition of $t$-s we have
\[t_{i j k}Y_s=t_{j k i}Y_s=-t_{j k s}Y_i=-t_{k s j}Y_i=t_{s k j}Y_i.\]
It sufficies to check the condition (2) from Lemma \ref{cocycle} for the free generators of free $B$-supermodule $A'$ only, that is one can assume that $a, b, c$ belong to \[\{1, Y_i, Y_j Y_k\mid 1\leq i\leq 4, 1\leq j< k\leq 4\}.\] If one of the elements $a, b, c$ is equal to $1$, then (2) holds automatically. Therefore, one can assume that \[a, b, c\in\{Y_i, Y_j Y_k\mid 1\leq i\leq 4, 1\leq j< k\leq 4\}.\]
Note also that if $a, b, c\in\{Y_j Y_k\mid 1\leq j< k\leq 4\}$, then all summands in (2) are equal to zero. 
 
Below we list the rest cases.

If $a=Y_i, b=Y_j, c=Y_k$, then (2) follows from (b) and (c). 

If $a=Y_i, b=Y_j, c=Y_sY_k$, then (2) is equivalent to (d). 

If $a=Y_i, b=Y_s Y_k, c=Y_j$, then (2) is equivalent to $(\star)$.

If $a=Y_s Y_k, b=Y_i, c=Y_j$, then (2) is derived from $(\star)$ and the definition of $t$-s.

If $a=Y_i Y_j , b=Y_s Y_k, c=Y_t$, then (2) is equivalent to the equation
\[(t_{s k j} Y_t+t_{s k t}Y_j)Y_i=0,\] 
which follows from $(\star)$.

If $a=Y_i Y_j, b=Y_t, c=Y_s Y_k$, then (2) is equivalent to the equation
\[t_{i j t}Y_sY_k=t_{s k t}Y_iY_j,\]
that can be derived from $(\star)$ and the definition of $t$-s as
\[t_{i j t}Y_sY_k=t_{j t i}Y_sY_k=-t_{j t s}Y_i Y_k=t_{j t s}Y_kY_i=t_{t s j}Y_kY_i=\]
\[-t_{t s k}Y_j Y_i=t_{t s k}Y_i Y_j=t_{s k t}Y_i Y_j.\]	
Finally, if $a=Y_t, b=Y_i Y_j, c=Y_s Y_k$, then (2) is equivalent to an equation, which is in turn equivalent to the equation from the previous case modulo $(\star)$.
Lemma is proven. 
\end{proof}
Set $B=C[t_{i j k}\mid 1\leq i< j< k\leq 4]/J$, where $C$ is a (commutative) Artinian ring and the ideal $J$ is generated by all products $t_{i j k}t_{s k l}$. Then $B$ is also Artinian. 

For arbitrary triple of indices $i, j, k$ we define $t_{i j k}$ to be zero if $i=j$ or $k\in\{i, j\}$. Otherwise, $i, j, k$ is a permutation of some ordered triple $i'<j'<k'$ and we set $t_{i j k}=(-1)^{\sigma} t_{i' j' k'}$, where $\sigma :
i'\mapsto i, j'\mapsto j, k'\mapsto k$. 

Define $A$ as a quotient of the super-ring $A'$ modulo the superideal $I$ generated by the left-hand sides of $(\star)$ and $(\star\star)$. 
The following lemma is obvious.
\begin{lm}
We have $\pi'(I, A')=0$, hence $\pi'$ naturally induces a cocylce $\pi\in C^1(A, A)$.	
\end{lm}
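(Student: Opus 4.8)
The plan is to verify directly that $\pi'(I,A')=0$. Since $\pi'$ is super-skew symmetric — this is part of Lemma~\ref{when pi' is a cocylce}, which also tells us $\pi'$ is a cocycle in $C^1(A',A)$ (the relations $(\star)$, $(\star\star)$ holding in the target $A$ by construction) — the vanishing on $I\otimes A'$ immediately gives the vanishing on $A'\otimes I$. Hence $\pi'$ factors through a $B$-bilinear map $\pi\colon A\otimes A\to A$, and then $\pi$ inherits all the needed structure of $\pi'$.

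The one genuine observation is that $\mathfrak t^2=0$ in $B$, where $\mathfrak t\subseteq B$ denotes the ideal generated by the images of the $t_{ijk}$, $1\le i<j<k\le 4$. Indeed, any two three-element subsets $P,Q$ of $\{1,2,3,4\}$ satisfy $|P\cap Q|\ge 2$; choosing $k\in P\cap Q$ and writing $P=\{i,j,k\}$, $Q=\{s,k,l\}$ exhibits $t_Pt_Q$, up to sign, as a generator $t_{ijk}t_{skl}$ of $J$. So every pairwise product of the $t_{ijk}$ vanishes in $B$.

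Next I would record two bookkeeping facts. First, $I\subseteq \mathfrak t A'$: the superideal $I$ is generated by the left-hand sides of $(\star)$ and $(\star\star)$, each of which is a $B$-linear combination of the $Y_i$ with coefficients among the $t_{ijk}$; since $\mathfrak t A'$ is itself a superideal, all of $I$ lies in it. Second, $\pi'(A',A')\subseteq \mathfrak t A$: by $B$-bilinearity $\pi'$ is determined by its values on the $B$-module generators $1,Y_i,Y_iY_j$ of $A'$, and every value listed in (a)--(d) lies in $\mathfrak t A$ (it is $0$ in (a), (b), it equals $t_{ijk}$ in (c), and $-t_{skj}Y_i$ in (d)). Combining these, for $x\in I$ write $x=\sum_\nu c_\nu g_\nu$ with $c_\nu\in\mathfrak t$, $g_\nu\in A'$; then for any $a\in A'$,
\[\pi'(x,a)=\sum_\nu c_\nu\,\pi'(g_\nu,a)\in \mathfrak t\cdot\mathfrak t A=\mathfrak t^2A=0.\]

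This gives $\pi'(I,A')=0$, hence $\pi'(A',I)=0$ as well, so $\pi'$ descends to $\pi\colon A\otimes A\to A$. Finally, because $A=A'/I$ and $(\star)$, $(\star\star)$ hold in $A$, $\pi'$ satisfies conditions (1), (2) of Lemma~\ref{cocycle} together with the defining conditions of $C^1$; pulling these identities back along the quotient map $A'\twoheadrightarrow A$ shows $\pi\in C^1(A,A)$ is again a cocycle. I do not anticipate a real obstacle, the statement being flagged as obvious; the only points needing a little care are that $I$ means the full superideal (so one uses that $\mathfrak t A'$ is closed under multiplication by $A'$) and that $\mathfrak t^2=0$ really follows from the stated generators of $J$ under the extended sign convention for the $t_{ijk}$ — the one spot where the small combinatorial remark about two $3$-subsets of a $4$-element set is used.
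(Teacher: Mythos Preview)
Your argument is correct. The paper gives no proof beyond declaring the lemma obvious, and your write-up is precisely the natural explication: the containments $I\subseteq\mathfrak{t}A'$ and $\pi'(A',A')\subseteq\mathfrak{t}A$ together with $B$-bilinearity reduce everything to $\mathfrak{t}^2=0$, and your pigeonhole remark that any two $3$-subsets of $\{1,2,3,4\}$ meet in at least two elements is exactly what forces every product $t_Pt_Q$ to lie in $J$ under the sign-extended convention. The descent of the cocycle property to $\pi$ is then immediate, as you say.
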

\begin{lm}
The element $t_{1 2 3}Y_4$ does not belong to $I$. 
\end{lm}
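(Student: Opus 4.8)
The plan is to play off two gradings on $A'$. Grade $A'$ by total degree in the odd generators $Y_1,\ldots,Y_4$. Since $Y_iY_jY_k=0$ in $A'$, the only nonzero homogeneous components are $(A')_0=B$, $(A')_1=\bigoplus_i BY_i$ and $(A')_2=\bigoplus_{i<j}BY_iY_j$, and $A'$ is free over $B$ on the evident $11$ monomials. All elements listed in $(\star)$ and $(\star\star)$ are homogeneous of $Y$-degree $1$, so $I$ is a graded ideal; multiplying a degree-$1$ generator by anything of positive degree lands in degree $\geq 2$, whence $I\cap(A')_1$ is exactly the $B$-span $I_1$ of the generators $t_{skj}Y_i+t_{ski}Y_j$ and $t_{ski}Y_i$. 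Thus it suffices to prove $t_{123}Y_4\notin I_1$.

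Second, grade $B$ (hence $A'$) by degree in the variables $t_{ijk}$. The key point is that every product of two such variables vanishes in $B$: writing $\underline 4=\{1,2,3,4\}$, any two $3$-element subsets of $\underline 4$ meet in at least two indices, so the definition $B=C[t_{ijk}\mid i<j<k]/J$ with $J$ generated by the products $t_{ijk}t_{skl}$ forces $t_Pt_Q=0$ for all $3$-subsets $P,Q$ of $\underline 4$ (including $P=Q$). Hence $B=C\oplus\bigoplus_P Ct_P$ is $C$-free of rank $5$ with $\bigoplus_P Ct_P$ square-zero, and each listed generator $g$ already has $t$-degree $1$, so $B\cdot g=C\cdot g$. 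Therefore $I_1=\sum_g Cg$ lies in the $t$-degree-$1$ part $V:=\bigoplus_i\bigoplus_P Ct_PY_i$ of $(A')_1$, a $C$-free module of rank $16$ in which $t_{123}Y_4$ is a basis vector, and we must show $t_{123}Y_4\notin\sum_g Cg$ inside $V$.

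I would then quotient $V$ by the $C$-submodule $N$ spanned by all $t_PY_i$ with $i\in P$; then $V/N$ is $C$-free on $v_i:=\overline{t_{P_i}Y_i}$, $i=1,\ldots,4$, where $P_i:=\underline 4\setminus\{i\}$ (note $t_{P_4}=t_{123}$). Each $(\star\star)$-generator $t_{ski}Y_i$ lies in $N$. For a $(\star)$-generator $g=t_{skj}Y_i+t_{ski}Y_j$: if two of $i,j,k,s$ coincide, a short case check puts both summands into $N$ (either they are $0$, or of the shape $t_PY_\bullet$ with $\bullet\in P$; the factor $2$ occurring when $i=j$ is harmless since $t_{ski}Y_i\in N$ anyway); and if $i,j,k,s$ are pairwise distinct they fill up $\underline 4$, so $\{s,k,j\}=P_i$, $\{s,k,i\}=P_j$, and $g\equiv\pm v_i\pm v_j\pmod N$. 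A direct computation over the six unordered pairs $\{i,j\}\subseteq\underline 4$, tracking the permutation signs, shows that modulo $N$ the generators span exactly the $C$-submodule $\langle v_1+v_4,\ v_2-v_4,\ v_3+v_4\rangle$ of $V/N$.

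That submodule is a direct summand of $V/N\cong C^4$ complementary to $Cv_4$, so the further quotient is isomorphic to $Cv_4\cong C$, which is nonzero since $C$ is a nonzero Artinian ring. As $t_{123}Y_4=t_{P_4}Y_4$ maps onto the generator $v_4$ of this free rank-one $C$-module, its image is nonzero; hence $t_{123}Y_4\notin I_1$ and a fortiori $t_{123}Y_4\notin I$. The only genuinely non-formal step is the finite sign bookkeeping in the third paragraph — pinning down the precise relations $v_i\pm v_j$ coming from $(\star)$ — while everything else is a routine use of the $Y$-grading and the $t$-grading; I expect that bookkeeping to be the main (though modest) obstacle.
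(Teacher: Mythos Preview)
Your proof is correct and follows essentially the same route as the paper: reduce to a $C$-linear question in the $t$-degree-$1$ part of $(A')_1$, kill the $(\star\star)$-type basis vectors $t_PY_i$ with $i\in P$, compute the images of the six nondegenerate $(\star)$-generators as $\pm v_i\pm v_j$, and check that $v_4$ lies outside their span. Your version is somewhat more carefully organized---you make explicit the two gradings that justify passing from ``$t_{123}Y_4\in I$'' to ``$t_{123}Y_4$ is a $C$-combination of the listed generators,'' a step the paper simply asserts---and you phrase the final check as a direct-sum decomposition $C^4=\langle v_1+v_4,\,v_2-v_4,\,v_3+v_4\rangle\oplus Cv_4$ rather than listing three relations among $z_1,\ldots,z_6$, but the substance is identical.
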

\begin{proof}
If this element does, then it should be a $C$-linear combination of the left-hand sides of $(\star)$ and $(\star\star)$. 
The $C$-module $A'_1$ is freely generated by the elements 
\[t_{ijk}Y_s, 1\leq i< j< k\leq 4, s\in\{i, j, k\},\]
and
\[v_1=t_{234}Y_1, v_2=t_{134}Y_2, v_3=t_{234}Y_3, v_4=t_{123}Y_4.\]
The first elements are just left-hand sides of $(\star)$. 
Let $M$ and $N$ denote the $C$-span of the left-hand sides of $(\star\star)$ and the $C$-span of $v_1, v_2, v_3, v_4$ respectively. It is clear that
$A'_1=M\oplus N$.

If an element $z=t_{s k j}Y_i+t_{s k i} Y_j$ does not belong to $M$, then 
the indices $i, j, s, k$ are pairwise different and $z\in N$. Without loss of a generality one can assume that $s< k$ and $i< j$. There are six such elements :
\[z_1=t_{1 4 3}Y_2+t_{1 4 2}Y_3=-t_{1 3 4}Y_2-t_{1 2 4}Y_3=-v_2-v_3,\]
\[z_2=t_{1 3 4}Y_2+t_{1 3 2 }Y_4=t_{1 3 4}Y_2-t_{1 2 3}Y_4=v_2-v_4,\]	
\[z_3=t_{1 2 4}Y_3+t_{1 2 3}Y_4=v_3+v_4,\]	
\[z_4=t_{234}Y_1+t_{231}Y_4=t_{234}Y_1+t_{123}Y_4=v_1+v_4,\]
\[z_5=t_{243}Y_1+t_{241}Y_3=-t_{234}Y_1+t_{124}Y_3=-v_1+v_3,\]
\[z_6=t_{342}Y_1+t_{341}Y_2=t_{234}Y_1+t_{134}Y_2=v_1+v_2.\]
Straightforward calculations show that 
\[z_1=-z_5-z_6, z_2=-z_4+z_6, z_3=z_4+z_5\]
and the $C$-submodule $N'=\sum_{1\leq i\leq 6} Cz_i$ is freely generated by the elements $z_4, z_5, z_6$. Moreover, $v_4$ does not belong to
$N'$. Lemma is proven.
\end{proof}	
Set $R=A_{\pi}$. Recall that $R$ has an regular element $y=\Pi 1$. Then the odd super-dimension of $A\simeq R/Ry$ is at most $2$. On the other hand, $Y_1\circ Y_2\circ Y_3\circ Y_4=\Pi(\pi(Y_1 Y_2, Y_3)Y_4)=\Pi(t_{12, 3}Y_4)\neq 0$, that is $\mathrm{sdim}_1(R)=4$. 
	
\section{Graded and bigraded super-rings and supermodules associated with Noetherian super-rings and their supermodules}

Recall that a (not necessary Noetherian) super-ring $R$ is called \emph{graded} if $R=\oplus_{n\geq 0} R(n)$, where $R(n)$ is an $R(0)$-super-submodule of $R$ and $R(n)R(n')\subseteq R(n+n')$ for any nonegative integers $n, n'$.
Respectively, an $R$-supermodule $M$ is called \emph{graded} if $M=\oplus_{n\geq 0}M(n)$, where $M(n)$
is an $R(0)$-super-submodule of $M$ and $R(n)M(n')\subseteq M(n+n')$ for any nonegative integers $n, n'$.

A super-ring $R$ is called \emph{bigraded} if 
$R=\oplus_{k, l\geq 0}R(k, l),$ and for any $k, k', l, l'\geq 0$ the following conditions hold :
\begin{enumerate}
\item $R(k, l)$ is a $R(0, 0)$-super-submodule of $R$.
\item $R(k, l)R(k', l')\subseteq R(k+k', l+l')$. 
\end{enumerate}
Respectively, an $R$-supermodule $M$ is called \emph{bigraded} if $M=\oplus_{k, l\geq 0}M(k, l),$ and for any $k, k', l. l'\geq 0$ the following conditions hold :
\begin{enumerate}
\item $M(k, l)$ is a $R(0, 0)$-super-submodule of $M$.
\item $R(k, l)M(k', l')\subseteq M(k+k', l+l')$.
\end{enumerate}

It is clear that a bigraded super-ring $R$ is also a graded super-ring with respect to
the grading $R(n)=\oplus_{k+l=n} R(k, l), n\geq 0$. Moreover, if $M$ is a bigraded $R$-supermodule, then
it is also graded $R$-supermodule with respect to the similar grading $M(n)=\oplus_{k+l=n}M(k, l), n\geq 0$.

The polynomial superalgebra $A[X_1, \ldots, X_r\mid Y_1, \ldots, Y_s]$ is a bigraded superalgebra, where each "scalar" $a\in A$ has degree $(0, 0)$, $X_i$ and $Y_j$ have degrees $(1, 0)$ and $(0, 1)$ respectively, $1\leq i\leq r, 1\leq j\leq s$.

Let $I$ be a super-ideal of $R$. Then we associate with $R$ a graded super-ring $\mathsf{gr}_I(R)$ 
such that $\mathsf{gr}_I(R)(n)=I^n/I^{n+1}, n\geq 0$. Moreover, if $M$ is a {left} $R$-supermodule, then one can define a graded $\mathsf{gr}_I(R)$-supermodule $\mathsf{gr}_I(M)$ such that $\mathsf{gr}_I(M)(n)=I^n M/I^{n+1} M$. 

Let $I_{k, l}$ denote the super-ideal $I_0^k I_1^lR$. It is obvious that $R=I_{0, 0}$ and $I=I_{1, 0}+I_{0, 1}$.
Furthermore, if $M$ is an $R$-supermodule, then we set $M_{k, l}=I_{k, l}M$. Since $I_{k, l}I_{k', l'}\subseteq
I_{k+k', l+l'}$, the superspaces 
$\oplus_{k, l\geq 0}I_{k, l}/I I_{k, l}$ 
and $\oplus_{k, l\geq 0}M_{k, l}/IM_{k, l}$
are a bigraded superalgebra and a bigraded supermodule over it, respectively. We denote them by
$\mathsf{bgr}_{I}(R)$ and $\mathsf{bgr}_{I}(M)$ correspondingly. 

Assume again that $R$ is a Noetherian super-ring with $\mathrm{Ksdim}_0(R)=\mathrm{Kdim}(R_0)<\infty$. 
\begin{lm}\label{graded is still Noetherian}
The super-rings $\mathsf{gr}_{I}(R)$ and $\mathsf{bgr}_{I}(R)$ are Noetherian for any super-ideal $I$ of $R$. Moreover, $\mathrm{Ksdim}_0(\mathsf{gr}_I(R))<\infty$ and $\mathrm{Ksdim}_0(\mathsf{bgr}_{I}(R))<\infty$ as well.
\end{lm}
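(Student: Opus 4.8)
The plan is to imitate the classical proof that the associated graded ring of a Noetherian ring along an ideal is Noetherian, carrying the $\mathbb{Z}_2$-grading along and invoking the super-version of the Hilbert basis theorem. Since $R$ is Noetherian, the superideal $I$ is finitely generated, $R/I$ is again a Noetherian super-ring, and $\mathrm{Kdim}((R/I)_0)=\mathrm{Kdim}(R_0/I_0)\le\mathrm{Kdim}(R_0)<\infty$. First I would note that $\mathsf{gr}_I(R)$ is generated as a super-algebra over its degree-zero component $\mathsf{gr}_I(R)(0)=R/I$ by the degree-one component $\mathsf{gr}_I(R)(1)=I/I^2$: as an $R/I$-module, $I^n/I^{n+1}$ is generated by the classes of the products $a_1\cdots a_n$ with $a_i\in I$, and each such class is a product of classes lying in $I/I^2$. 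As $I$ is a finitely generated $R$-module annihilated modulo $I^2$ by $I$, the module $I/I^2$ is a finitely generated $R/I$-module; choosing finitely many homogeneous generators (say $r$ even and $s$ odd) yields a surjection of super-algebras $(R/I)[X_1,\dots,X_r\mid Y_1,\dots,Y_s]\twoheadrightarrow\mathsf{gr}_I(R)$. By the super-analogue of the Hilbert basis theorem — a finitely generated super-commutative super-algebra over a Noetherian super-ring is Noetherian, which follows from the ordinary Hilbert basis theorem via the criterion that a super-ring is Noetherian exactly when its even part is Noetherian and its odd part is finitely generated over the even part (the Noetherian analogue of Lemma \ref{first_characterization}) — the polynomial super-algebra is Noetherian, hence so is its quotient $\mathsf{gr}_I(R)$.

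For $\mathsf{bgr}_I(R)$ I would argue the same way, now bigraded. Its $(0,0)$-component is again $R/I$, and it is generated over $R/I$ by the components $\mathsf{bgr}_I(R)(1,0)=I_0R/(I_0^2R+I_0I_1R)$ and $\mathsf{bgr}_I(R)(0,1)=I_1R/(I_0I_1R+I_1^2R)$, since $I_{k,l}=I_0^kI_1^lR$ is generated over $R$ by products of $k$ elements of $I_0R$ and $l$ elements of $I_1R$ (together with a scalar). Because $I\cdot I_0R=I_0^2R+I_0I_1R$ and $I\cdot I_1R=I_0I_1R+I_1^2R$, these two components are finitely generated $R$-modules annihilated by $I$, hence finitely generated $R/I$-modules; so $\mathsf{bgr}_I(R)$ is again a quotient of a polynomial super-algebra over $R/I$ and therefore Noetherian. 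Note that this half of the lemma uses only that $R$ is Noetherian.

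For the statements on $\mathrm{Ksdim}_0$, I would use that for any finitely generated super-commutative super-algebra $S=T[X_1,\dots,X_r\mid Y_1,\dots,Y_s]/(\text{ideal})$ the even part $S_0$ is a finitely generated commutative $T_0$-algebra, being generated over $T_0$ by the $X_i$ together with the finitely many products $Y_iY_j$. Taking $T=R/I$, whose even part $R_0/I_0$ is Noetherian of Krull dimension $\le\mathrm{Kdim}(R_0)<\infty$, and using $\mathrm{Kdim}(A[T_1,\dots,T_n])=\mathrm{Kdim}(A)+n$ for Noetherian $A$ together with the fact that passing to a quotient does not raise the Krull dimension, we conclude that $(\mathsf{gr}_I(R))_0$ and $(\mathsf{bgr}_I(R))_0$ have finite Krull dimension, i.e. $\mathrm{Ksdim}_0(\mathsf{gr}_I(R))<\infty$ and $\mathrm{Ksdim}_0(\mathsf{bgr}_I(R))<\infty$. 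The only mildly delicate ingredient is the reduction of the Noetherian (and finite-dimensionality) properties of a super-ring to its even part plus finite generation of the odd part over it — the Noetherian counterpart of Lemma \ref{first_characterization}; once that is granted, the rest is routine bookkeeping with Koszul signs, and I expect no genuine obstacle, only care in pinning down the generating degrees in the bigraded case.
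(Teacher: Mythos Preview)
Your approach is essentially the paper's: realise both associated super-rings as quotients of a polynomial super-algebra $(R/I)[X_1,\dots,X_r\mid Y_1,\dots,Y_s]$ over the Noetherian super-ring $R/I$, and read off Noetherianity and finite even Krull dimension from there. The paper streamlines this in two small ways. First, it observes the natural bigraded epimorphism $\mathsf{bgr}_I(R)\twoheadrightarrow\mathsf{gr}_I(R)$ induced by $I_{k,l}\hookrightarrow I^{k+l}$, so only the bigraded case needs to be treated. Second, rather than invoking a super-Hilbert basis theorem and the formula $\mathrm{Kdim}(A[T_1,\dots,T_n])=\mathrm{Kdim}(A)+n$, the paper notes that the whole polynomial super-algebra is a \emph{finite module} over the commutative Noetherian ring $(R/I)_0[X_1,\dots,X_r]$; Noetherianity is then immediate, and finiteness of $\mathrm{Kdim}$ follows by integrality (going-up). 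Your route is equally valid, just slightly longer.

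One minor correction: your claimed generating set for $S_0$ over $T_0$ (``the $X_i$ together with the products $Y_iY_j$'') is incomplete when $T=R/I$ has nonzero odd part. Elements of the form $uY_j$ with $u\in T_1$ are even but not in the $T_0$-subalgebra generated by the $X_i$ and $Y_iY_j$. Since $T_1$ is a finitely generated $T_0$-module, adding the finitely many $u_kY_j$ fixes this and your conclusion stands; the paper's module-finiteness argument sidesteps the issue entirely.
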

\begin{proof}
There is a natural superalgebra epimorphism $\mathsf{bgr}_{I}(R)\to \mathsf{gr}_I(R)$ that is induced by the embeddings $I_{k, l}\to I^{k+l}$. Thus all we need is to prove that $\mathsf{bgr}_{I}(R)$ is a Noetherian super-ring and its even part has finite Krull dimension.

Assume that $I$ is generated by $r$ even and $l$ odd elements $x_1, \ldots , x_r$ and $y_1, \ldots, y_l$ respectively. There is a natural epimorphism \[(R/I)[X_1, \ldots, X_r\mid Y_1, \ldots , Y_s]\to\mathsf{bgr}_{I}(R),\] of bigraded superalgebras, 
induced by the map 
\[X_i\mapsto x_i +I I_{1, 0}, \ Y_j\mapsto y_j+I I_{0, 1}, 1\leq i\leq r, 1\leq j\leq s.\] Therefore, one has to show that $(R/I)[X_1, \ldots, X_r\mid Y_1, \ldots , Y_s]$ is a Noetherian super-ring and its even part has finite Krull dimension. 

Note that $(R/I)[X_1, \ldots, X_r\mid Y_1, \ldots , Y_s]$ is a finitely generated 
$(R/I)_0[X_1, \ldots, X_r]$-module. Since $(R/I)_0[X_1, \ldots, X_r]$ is a Noetherian ring, the first statement follows.

Finally, $((R/I)[X_1, \ldots, X_r\mid Y_1, \ldots , Y_s])_0$ is integral over $(R/I)_0[X_1, \ldots, X_r]$, and
Theorem 20 and Theorem 22 from \cite{mats} imply the second statement. 
\end{proof}

\section{Grading and super-dimension of supemodules}
Let $R$ be a Noetherian super-ring with $\mathrm{Kdim}(R_0)<\infty$. Let $I$ be a superideal of $R$ such that $I\subseteq\mathsf{rad}(R)$. The graded super-ring $\mathsf{gr}_{I}(R)$ is denoted by $B$. Then $B=\oplus_{n\geq 0} B(n)$, where $B(n)=I^n/I^{n+1}, n\geq 0$. 
Observe that for any positive integer $n$, there holds $B(n)=B(1)^n$.

Let $M$ be a finitely generated $R$-supermodule. Let $C$ denote $\mathsf{gr}_{I}(M)$. Then $C=\oplus_{n\geq 0} C(n)$, where 
$C(n)=I^n M/I^{n+1}M, n\geq 0$.
As above, for any positive integer $n$ there is $C(n)=B(1)^n C(0)$. In particular, a homogeneous element 
$b\in B(k)$ belongs to $\mathrm{Ann}_{B}(C)$ if and only if $b C(0)=0$. Furthermore, since the super-ideal
$\oplus_{n\geq 1} B(n)$ is nilpotent, we have
\[\mathrm{sdim}_0(C)=\mathrm{Kdim}(B(0)_0/\mathrm{Ann}_{B(0)_0}(C(0))).\]

The prime (super)spectrum of $R$ can be naturally identified with the prime (super)spectrum of $B$ via
\[\mathfrak{P}\mapsto \widetilde{\mathfrak{P}}=\mathfrak{P}/I\oplus I/I^2\oplus\ldots,\]
so that 
\[\mathsf{gr}_{I_{\mathfrak{P}}}(M_{\mathfrak{P}})\simeq \mathsf{gr}_{I}(M)_{\widetilde{\mathfrak{P}}}=B_{\widetilde{\mathfrak{P}}}.\]
\begin{lm}\label{weaker_inequality_II}
The following  hold 
\[\mathrm{sdim}_0(M)=\mathrm{sdim}_0(\mathsf{gr}_{I}(M)) \ \mbox{and} \ \mathrm{sdim}_1(M)\geq \mathrm{sdim}_1(\mathsf{gr}_{I}(M)).\]
\end{lm}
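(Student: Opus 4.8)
The plan is to deduce the even-dimension equality directly from the prime-spectrum identification already set up in the text, and then to obtain the odd-dimension inequality by transporting a system of odd parameters of $M$ upward to $\mathsf{gr}_I(M)$. For the first equation, recall that we already observed $\mathrm{sdim}_0(C)=\mathrm{Kdim}(B(0)_0/\mathrm{Ann}_{B(0)_0}(C(0)))$ where $B(0)=R/I$ and $C(0)=M/IM$; since $I\subseteq\mathsf{rad}(R)$ it is not nilpotent in general, so I cannot simply quote Lemma~\ref{on even dimension}(1). Instead I would argue via the spectrum: a prime $\mathfrak p$ of $R_0$ lies in the support of $M$ iff (after localizing at $\mathfrak p$) $M_{\mathfrak p}\neq 0$ iff, by Nakayama applied to the finitely generated module $M_{\mathfrak p}$ over the local ring $(R_0)_{\mathfrak p}$ with $I_{\mathfrak p}\subseteq\mathsf{rad}$, we have $(M/IM)_{\mathfrak p}\neq 0$; equivalently $\mathfrak p$ is in the support of $M/IM=C(0)$ over $B(0)_0=R_0/I_0$. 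Hence $\mathrm{Kdim}(R_0/\mathrm{Ann}_{R_0}(M))=\mathrm{Kdim}(B(0)_0/\mathrm{Ann}_{B(0)_0}(C(0)))$, i.e. $\mathrm{sdim}_0(M)=\mathrm{sdim}_0(\mathsf{gr}_I(M))$.

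For the odd-dimension inequality I want to show $\mathrm{sdim}_1(M)\geq \mathrm{sdim}_1(\mathsf{gr}_I(M))$, i.e. any system of odd parameters of $C=\mathsf{gr}_I(M)$ has length at most $\mathrm{sdim}_1(M)$. By Proposition~\ref{extension_of_prop}, applied to the super-ring $B$ with $M$ replaced by $C$, it suffices to control $\dim_{B(0)_0}(B_1^lC)$; and by the analogous statement over $R$ it suffices to relate this to $\dim_{R_0}(R_1^lM)$. The key point is that $B_1$ is generated over $B(0)_0$ by the image of $R_1$ together with possibly higher-degree odd pieces, but modulo the nilpotent ideal $\oplus_{n\geq 1}B(n)$ only $B(0)_1=R_1/I_1$ survives, and a power $B_1^l C$ is nonzero on $C(0)$ iff the corresponding product $R_1^lM$ is nonzero modulo $IM$. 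Concretely: if $y_1,\dots,y_l\in B$ are homogeneous odd elements forming a system of odd parameters of $C$, subordinating a prime $\widetilde{\mathfrak p}$, then writing each $y_i\equiv \bar y_i \pmod{\oplus_{n\geq 1}B(n)}$ with $\bar y_i\in R_1$, the condition $\mathrm{Ann}_{B_0}(y^{\underline l}C)\subseteq\widetilde{\mathfrak p}$ forces (again by Nakayama at $\mathfrak p$, since the degree-raising part acts nilpotently) $\mathrm{Ann}_{R_0}(\bar y^{\underline l}M)\subseteq\mathfrak p$. Combined with $\mathrm{sdim}_0(M)=\mathrm{sdim}_0(C)$, the chain witnessing maximality for $C$ lifts to a chain of the same length in $R_0/\mathrm{Ann}_{R_0}(M)$, so $\bar y_1,\dots,\bar y_l$ is a system of odd parameters of $M$ and $l\leq\mathrm{sdim}_1(M)$.

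The main obstacle I anticipate is the bookkeeping in the last step: the odd elements of $B$ need not be homogeneous of degree $0$, and one must be careful that replacing $y_i$ by its degree-$0$ component $\bar y_i$ does not shrink the annihilator in a way that breaks the prime chain. This is handled by the nilpotence of $\oplus_{n\geq 1}B(n)$ (so the higher-degree contributions lie in the radical of any $\mathrm{Ann}$ and hence do not change which primes contain it) together with the identification $\mathsf{gr}_{I_{\mathfrak p}}(M_{\mathfrak p})\simeq B_{\widetilde{\mathfrak p}}$ already recorded above, which localizes the whole argument at a fixed prime and lets Nakayama do the work. Everything else—translating between the $\dim(R_1^lM)$ description and systems of odd parameters—is routine via Proposition~\ref{extension_of_prop} and Lemma~\ref{if even dimensions are the same}.
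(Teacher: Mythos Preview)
Your plan follows the paper's overall strategy, but both halves contain genuine gaps.

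For the even dimension, your Nakayama step does not work: from $I\subseteq\mathsf{rad}(R)$ you \emph{cannot} conclude $I_{\mathfrak p}\subseteq\mathsf{rad}((R_0)_{\mathfrak p})$ for an arbitrary prime $\mathfrak p$. If $\mathfrak p$ is not maximal one may have $I_0\not\subseteq\mathfrak p$ (take $R=R_0=K[x]_{(x)}$, $I=(x)$, $\mathfrak p=(0)$), and then $I_{\mathfrak p}=(R_0)_{\mathfrak p}$, so $(M/IM)_{\mathfrak p}=0$ even when $M_{\mathfrak p}\neq 0$. Thus $\mathrm{Supp}(M)$ and $\mathrm{Supp}(M/IM)$ differ in general and your equality $\dim(M)=\dim(M/IM)$ fails. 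The paper does not argue via supports at all; it simply combines the displayed formula for $\mathrm{sdim}_0(C)$ preceding the lemma with Lemma~\ref{on even dimension}(1). You are right that the latter needs $I$ nilpotent, but note that the paper's preceding assertion that $\bigoplus_{n\geq 1}B(n)$ is nilpotent is, for Noetherian $R$ with $I\subseteq\mathsf{rad}(R)$, equivalent (by Nakayama) to $I$ itself being nilpotent---so the paper is tacitly working in that regime, and your attempt to escape it leads nowhere.

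For the odd inequality, the reduction ``replace each homogeneous $y_i\in B$ by its degree-$0$ component $\bar y_i$'' collapses: if $y_i\in B(k)_1$ with $k\geq 1$ then $\bar y_i=0$, hence $\bar y^{\underline l}=0$ and the containment $\mathrm{Ann}_{R_0}(\bar y^{\underline l}M)\subseteq\mathfrak p$ says nothing. The paper avoids this by fixing at the start $B(0)_0$-module generators $z_1,\dots,z_t$ of $B(0)_1\oplus B(1)_1$ (these already generate $B_1$ over $B_0$), lifting each $z_i$ to an element $y_i\in R_1$ (those with $z_i\in B(1)_1$ lift to $I_1$, not to $0$), and observing via Nakayama that $y_1,\dots,y_t$ generate $R_1$ over $R_0$. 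Proposition~\ref{extension_of_prop} then lets one take the longest odd-parameter system of $C$ to be a subset $\{z_{s_1},\dots,z_{s_l}\}$ of these generators, and one verifies directly the containment $(\mathrm{Ann}_{R_0}(y^S M)+I_0)/I_0\subseteq \mathrm{Ann}_{B(0)_0}(z^S C)$, which together with the identification of prime chains yields the inequality. The missing idea in your sketch is precisely this correct lifting of positive-degree generators; truncation to degree $0$ destroys the information you need.
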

\begin{proof}
By the above remark $\mathrm{sdim}_0(\mathsf{gr}_{I}(M))=\mathrm{sdim}_0(M/IM)$ and Lemma \ref{on even dimension} infers the first equality.
Further, if the elements $z_1, \ldots, z_t$ generate $B(0)_1\oplus B(1)_1$ as a $B(0)_0$-module, then
they generate $B_1$ as a $B_0$-module. Moreover, their representatives $y_1, \ldots , y_t\in R_1$ generate
$R_1$ as an $R_0$-module (cf. \cite{mats}, Corollary (1.M)(b)). By Proposition \ref{extension_of_prop},
for some $1\leq s_1<\ldots<s_l\leq t$ the elements $z_{s_1}, \ldots, z_{s_l}$ form a longest system of odd parameters of $C=\mathsf{gr}_{I}(M)$. In other words, there is a prime ideal $\mathfrak{q}$ of $R_0$ such that the system $z_{s_1}, \ldots, z_{s_l}$ subordinates the prime ideal $\mathfrak{q}/I_0\oplus (\oplus_{n\geq 1}B(n)_0)$, that is $\mathrm{Ann}_{B(0)_0}(z^S C)\subseteq\mathfrak{q}/I_0$, where
$S=\{s_1, \ldots, s_l\}$ and $\mathfrak{q}/I_0$ is congruent to the first member of a longest prime chain in $B(0)_0/\mathrm{Ann}_{B(0)_0}(C(0))$.
We have
\[(\mathrm{Ann}_{R_0}(M) +I_0)/I_0\subseteq \mathrm{Ann}_{B(0)_0}(C(0))\subseteq \mathrm{Ann}_{B(0)_0}(z^S C).\]
Moreover, Lemma \ref{on even dimension} implies that  $\mathrm{Ann}_{B(0)_0}(C(0))$ is contained in the radical of the ideal
$(\mathrm{Ann}_{R_0}(M) +I_0)/I_0$, hence $\mathfrak{q}/\mathrm{Ann}_{R_0}(M)$ is the first member of a longest prime chain in $R_0/\mathrm{Ann}_{R_0}(M)$. Since 
\[(\mathrm{Ann}_{R_0}(y^S M)+I_0)/I_0\subseteq \mathrm{Ann}_{B(0)_0}(z^S C),\] we conclude that
$\mathrm{Ann}_{R_0}(y^S M)\subseteq\mathfrak{q}$. Lemma is proven.
\end{proof}
\begin{pr}\label{useful_inequality_II}
Let $\mathfrak{p}$ be a prime ideal of $R_0$, such that $\mathfrak{p}/\mathrm{Ann}_{R_0}(M)$ is the first member of a longest prime chain of $R_0/\mathrm{Ann}_{R_0}(M)$.
Set $\mathfrak{P}=\mathfrak{p}\oplus R_1$. Then
\[\mathrm{sdim}_1(M_{\mathfrak{P}})=\max\{k\mid (R_1^kM)_{\mathfrak{P}}\neq 0\}\leq\mathrm{sdim}_1(M).\]
If there is a longest system of odd parameters in $M$, which subordinates  $\mathfrak{p}$, then the above inequality turns into equation.
\end{pr}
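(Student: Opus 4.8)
The plan is to localize at $\mathfrak P$ and reduce the computation of $\mathrm{sdim}_1(M_{\mathfrak P})$ to the Artinian case already handled in Lemma~\ref{over_Artinian}. Write $N=\mathrm{Ann}_{R_0}(M)$. The first thing I would record is that $\mathfrak p$ is necessarily a \emph{minimal} prime over $N$: it contains $N$ because $\mathfrak p/N$ is a prime of $R_0/N$, and if there were a prime $\mathfrak q$ with $N\subseteq\mathfrak q\subsetneq\mathfrak p$, prepending $\mathfrak q$ to a longest prime chain of $R_0/N$ having $\mathfrak p$ as first member would produce a chain of length $\dim(M)+1$, which is impossible. Consequently $(R_0/N)_{\mathfrak p}$ is a zero-dimensional Noetherian local ring; since $M_{\mathfrak P}$ (the localization at the multiplicative set $R_0\setminus\mathfrak p$) is a finitely generated module over the Noetherian super-ring $R_{\mathfrak P}$ with $\mathrm{Kdim}((R_{\mathfrak P})_0)<\infty$, and $\dim(M_{\mathfrak P})=\mathrm{Kdim}((R_0/N)_{\mathfrak p})=0$ while $M_{\mathfrak P}\neq 0$ (as $\mathfrak p\in\mathrm{Supp}_{R_0}(M)$), Lemma~\ref{supermodules of finite length} gives that $\overline R:=R_{\mathfrak P}/\mathrm{Ann}_{R_{\mathfrak P}}(M_{\mathfrak P})$ is an Artinian super-ring.

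Next I would prove the displayed equality. Applying Lemma~\ref{over_Artinian} to the nonzero $\overline R$-supermodule $M_{\mathfrak P}$ (and using that $\mathrm{sdim}_1(M_{\mathfrak P})=\mathrm{Ksdim}_1(\overline R)$ by definition), one gets $\mathrm{sdim}_1(M_{\mathfrak P})=\max\{k\mid \overline R_1^{\,k}M_{\mathfrak P}\neq 0\}$. It then remains to identify $\overline R_1^{\,k}M_{\mathfrak P}$ with $(R_1^kM)_{\mathfrak P}$: the $R_{\mathfrak P}$-action on $M_{\mathfrak P}$ factors through $\overline R$, so $\overline R_1^{\,k}M_{\mathfrak P}=(R_{\mathfrak P})_1^{\,k}M_{\mathfrak P}$, and since localization commutes with finite products of submodules this equals $(R_1^k)_{\mathfrak p}M_{\mathfrak p}=(R_1^kM)_{\mathfrak p}=(R_1^kM)_{\mathfrak P}$. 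This is routine bookkeeping.

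For the inequality, set $k=\mathrm{sdim}_1(M_{\mathfrak P})$, so $(R_1^kM)_{\mathfrak P}\neq 0$. Since $R_1^kM$ is finitely generated, this means $\mathfrak p\in\mathrm{Supp}_{R_0}(R_1^kM)=V(\mathrm{Ann}_{R_0}(R_1^kM))$, i.e.\ $\mathrm{Ann}_{R_0}(R_1^kM)\subseteq\mathfrak p$. Now take a longest prime chain $\mathfrak p=\mathfrak p_0\subsetneq\cdots\subsetneq\mathfrak p_n$ of $R_0/N$, where $n=\dim(M)$; every $\mathfrak p_i$ contains $\mathfrak p$, hence contains $\mathrm{Ann}_{R_0}(R_1^kM)$, so this chain also lies in $R_0/\mathrm{Ann}_{R_0}(R_1^kM)$ and shows $\dim(R_1^kM)\geq n$. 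The reverse inequality is clear from $R_1^kM\subseteq M$, so $\dim(R_1^kM)=\dim(M)$, and Proposition~\ref{extension_of_prop} yields $k\leq\mathrm{sdim}_1(M)$.

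Finally, for the equality statement, suppose $y_1,\dots,y_l$ is a longest system of odd parameters of $M$ subordinating $\mathfrak p$, so that $l=\mathrm{sdim}_1(M)$ and $\mathrm{Ann}_{R_0}(y^{\underline l}M)\subseteq\mathfrak p$, whence $(y^{\underline l}M)_{\mathfrak P}\neq 0$ and therefore $(R_1^lM)_{\mathfrak P}\neq 0$ because $y^{\underline l}M\subseteq R_1^lM$. By the equality proved above, $\mathrm{sdim}_1(M_{\mathfrak P})\geq l=\mathrm{sdim}_1(M)$, and combined with the inequality this forces equality. The only genuinely delicate point in the whole argument is the very first one — realizing that the hypothesis on $\mathfrak p$ forces it to be minimal over $N$, which is exactly what makes $M_{\mathfrak P}$ of finite length so that Lemma~\ref{over_Artinian} becomes available; everything after that is a matter of checking that localization is compatible with the formation of $R_1^kM$ and with supports.
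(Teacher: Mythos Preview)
Your proof is correct and follows essentially the same approach as the paper: both arguments observe that the hypothesis forces $\mathfrak p$ to be minimal over $\mathrm{Ann}_{R_0}(M)$ so that $R_{\mathfrak P}/\mathrm{Ann}_{R_{\mathfrak P}}(M_{\mathfrak P})$ is Artinian, then invoke Lemma~\ref{over_Artinian} for the displayed equality, and finish by translating the condition $(R_1^kM)_{\mathfrak P}\neq 0$ (equivalently $\mathrm{Ann}_{R_0}(z^{\underline k}M)\subseteq\mathfrak p$) into the statement that the corresponding odd elements form a system of odd parameters of $M$ subordinating~$\mathfrak p$. The only cosmetic difference is that the paper phrases the last step in terms of individual odd elements $z_1,\dots,z_t$ and the localized annihilator identity $\mathrm{Ann}_{S^{-1}R_0}(\tfrac{z_1}{s_1}\cdots\tfrac{z_t}{s_t}M)=S^{-1}\mathrm{Ann}_{R_0}(z^{\underline t}M)$, whereas you work directly with $R_1^kM$ and appeal to Proposition~\ref{extension_of_prop}.
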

\begin{proof}
We use the following elementary observations. For any $z_1, \ldots, z_t\in R_1$ and $s_1, \ldots , s_t\in S=R_0\setminus\mathfrak{p}$, there holds
\[\mathrm{Ann}_{S^{-1}R_0}(\frac{z_1}{s_1}\ldots \frac{z_t}{s_t} M)=S^{-1}\mathrm{Ann}_{R_0}(z^{\underline{t}} M).\]
Besides, $\mathrm{Ann}_{R_{\mathfrak{P}}}(M_{\mathfrak{P}})=S^{-1}\mathrm{Ann}_{R}(M)$ and $R_{\mathfrak{P}}/\mathrm{Ann}_{R_{\mathfrak{P}}}(M_{\mathfrak{P}})$ is an Artinian super-ring with the unique prime
superideal $\mathfrak{P}_{\mathfrak{P}}/\mathrm{Ann}_{R_{\mathfrak{P}}}(M_{\mathfrak{P}})=S^{-1}(\mathfrak{P}/\mathrm{Ann}_R(M))$.
Since the super-dimension of $M_{\mathfrak{P}}$ as an $R_{\mathfrak{P}}$-supermodule is the same as the super-dimension of $M_{\mathfrak{P}}$ as an $R_{\mathfrak{P}}/\mathrm{Ann}_{R_{\mathfrak{P}}}(M_{\mathfrak{P}})$-supermodule, Lemma \ref{over_Artinian} implies the first equality.
	
Further, 
the elements $\frac{z_1}{s_1}, \ldots , \frac{z_t}{s_t}$ form a system of odd parameters  if and only if
$\mathrm{Ann}_{S^{-1}R_0}(\frac{z_1}{s_1}\ldots \frac{z_t}{s_t} M)\subseteq S^{-1}\mathfrak{p}$ if and only if
$\mathrm{Ann}_{R_0}(z^{\underline{t}}M)\subseteq\mathfrak{p}$. Proposition is proven.  
\end{proof}
Let $P(M)$ denote the subset of the prime spectrum of $R$ consisting of all $\mathfrak{P}$ such that $\mathrm{Ann}_R(M)\subseteq\mathfrak{P}$ and $\mathfrak{p}/\mathrm{Ann}_{R_0}(M)$ is the first member of a longest prime chain in $R_0/\mathrm{Ann}_{R_0}(M)$. The following corollary is now evident.
\begin{cor}\label{another def of odd dim}
There is
\[\mathrm{sdim}_1(M)=\max_{\mathfrak{P}\in P(M)}\max\{k\mid (R_1^kM)_{\mathfrak{P}}\neq 0\}. \]
\end{cor}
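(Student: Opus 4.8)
The plan is to deduce Corollary \ref{another def of odd dim} directly from Proposition \ref{useful_inequality_II} together with the fact that a longest system of odd parameters exists and subordinates \emph{some} prime ideal in $P(M)$. First I would recall that, by definition, $\mathrm{sdim}_1(M)$ is the supremum (in fact the maximum, since $R$ is Noetherian) of the lengths of systems of odd parameters of $M$, and that by Proposition \ref{extension_of_prop} one may take such a longest system to consist of representatives $y_{i_1},\dots,y_{i_l}$ of a fixed generating set of $R_1$ over $R_0$. Fix a longest system of odd parameters $y_1,\dots,y_l$ of $M$; by the characterization recalled right after $(\star)$ in Section 2, it subordinates a prime ideal $\mathfrak p$ of $R_0$, meaning $\mathrm{Ann}_{R_0}(y^{\underline l}M)\subseteq\mathfrak p$ and $\mathfrak p/\mathrm{Ann}_{R_0}(M)$ is the first member of a longest prime chain in $R_0/\mathrm{Ann}_{R_0}(M)$. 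Setting $\mathfrak P=\mathfrak p\oplus R_1$, we get $\mathfrak P\in P(M)$.

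Next I would apply Proposition \ref{useful_inequality_II} to this particular $\mathfrak P$: since a longest system of odd parameters of $M$ subordinates $\mathfrak p$, the inequality there is an equation, so
\[
\mathrm{sdim}_1(M)=\mathrm{sdim}_1(M_{\mathfrak P})=\max\{k\mid (R_1^kM)_{\mathfrak P}\neq 0\}\leq \max_{\mathfrak Q\in P(M)}\max\{k\mid (R_1^kM)_{\mathfrak Q}\neq 0\}.
\]
For the reverse inequality, Proposition \ref{useful_inequality_II} applied to an arbitrary $\mathfrak Q\in P(M)$ gives $\max\{k\mid (R_1^kM)_{\mathfrak Q}\neq 0\}=\mathrm{sdim}_1(M_{\mathfrak Q})\leq\mathrm{sdim}_1(M)$; taking the maximum over $\mathfrak Q\in P(M)$ yields the opposite bound. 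Combining the two inequalities gives the claimed equality.

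The only genuine point to verify carefully is that a \emph{longest} system of odd parameters of $M$ automatically subordinates some prime in $P(M)$ — i.e., that the "if" hypothesis in the second half of Proposition \ref{useful_inequality_II} is actually met for at least one $\mathfrak P$. This is essentially immediate from the reformulation of "system of odd parameters" given in Section 2: if $y_1,\dots,y_l$ realizes $\dim(y^{\underline l}M)=\dim(M)$, then there is a longest prime chain $\mathfrak p_0\subseteq\dots\subseteq\mathfrak p_t$ in $R_0$ with $\mathrm{Ann}_{R_0}(y^{\underline l}M)\subseteq\mathfrak p_0$ and $\mathfrak p_0/\mathrm{Ann}_{R_0}(M)$ the first member of a longest prime chain in $R_0/\mathrm{Ann}_{R_0}(M)$, so $\mathfrak p:=\mathfrak p_0$ works and $\mathfrak P:=\mathfrak p\oplus R_1\in P(M)$. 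So there is no real obstacle here; the corollary is a packaging of Proposition \ref{useful_inequality_II} once one observes that $P(M)$ is exactly the index set over which the pointwise odd-dimension computation ranges, and that the global $\mathrm{sdim}_1(M)$ is attained at one of these points.
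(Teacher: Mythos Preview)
Your proposal is correct and is exactly the unpacking of Proposition \ref{useful_inequality_II} that the paper has in mind; the paper itself gives no proof beyond declaring the corollary ``evident'' from that proposition. Your two inequalities --- the upper bound from the inequality in Proposition \ref{useful_inequality_II} applied to every $\mathfrak Q\in P(M)$, and the lower bound from the equality case applied to a prime subordinated by a longest system of odd parameters --- are precisely what makes it evident.
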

In general, the inequality in Lemma \ref{weaker_inequality_II} is not necessary equation. But in the following important case it is.
\begin{theorem}\label{grading_and_dimension_II}
If $I=I_R$, then 
\[\mathrm{sdim}(M)=\mathrm{sdim}(\mathsf{gr}_{I}(M)).\]
\end{theorem}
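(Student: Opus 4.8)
The plan is to prove the only missing part, namely $\mathrm{sdim}_1(\mathsf{gr}_{I_R}(M))\ge\mathrm{sdim}_1(M)$, since the opposite inequality together with $\mathrm{sdim}_0(M)=\mathrm{sdim}_0(\mathsf{gr}_{I_R}(M))$ is exactly Lemma~\ref{weaker_inequality_II} (note $I_R\subseteq\mathsf{rad}(R)$, so that lemma applies), and since $\mathsf{gr}_{I_R}(R)$, $\mathsf{gr}_{I_R}(M)$ satisfy the standing hypotheses by Lemma~\ref{graded is still Noetherian}. Write $I=I_R$, $B=\mathsf{gr}_I(R)$ and $C=\mathsf{gr}_I(M)$. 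First I would record the features special to $I_R$. Since $R_1^2M\subseteq R_1M$, for each $n\ge1$ the $R_0$-module $R_1^nM$ is in fact an $R$-supersubmodule of $M$ and $I^nM=R_1^nM$; hence $C(0)=M/R_1M$ and $C(n)=R_1^nM/R_1^{n+1}M$ for $n\ge1$. Moreover $B(0)=R/I_R\simeq R_0/R_1^2$ is purely even and $B(1)=I/I^2\simeq R_1/R_1^3$ is purely odd, so $B_1\subseteq\bigoplus_{n\ge1}B(n)$ while $B(1)\subseteq B_1$. Finally, as $R_1$ is a finitely generated $R_0$-module and $y^2=0$ for every odd $y$, the ideals $R_1^2$ and $I_R$ are nilpotent; consequently $C(n)=0$ for $n\gg0$ and $\bigoplus_{n\ge1}B(n)$ is a nilpotent superideal of $B$.

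The second step is the identity $B_1^kC=\bigoplus_{n\ge k}C(n)$ for every $k\ge1$. The inclusion $\subseteq$ follows from $B_1\subseteq\bigoplus_{n\ge1}B(n)$; for $\supseteq$ one uses $B(1)\subseteq B_1$, $B(1)^k=B(k)$ and $B(k)C(m)=C(k+m)$, which gives $(B_1^kC)(N)\supseteq B(k)C(N-k)=C(N)$ for all $N\ge k$. Now, because $\bigoplus_{n\ge1}B(n)$ is nilpotent (hence contained in the radical of every annihilator), and because an element of $B(0)_0$ kills $\bigoplus_{n\ge k}C(n)$ if and only if it kills $C(k)$, one obtains $\dim(B_1^kC)=\mathrm{Kdim}\bigl(B(0)_0/\mathrm{Ann}_{B(0)_0}(C(k))\bigr)$. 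Since $B(0)_0\simeq R_0/R_1^2$ and $C(k)$ is naturally an $R_0/R_1^2$-module, $B(0)_0/\mathrm{Ann}_{B(0)_0}(C(k))\simeq R_0/\mathrm{Ann}_{R_0}(C(k))$, whence $\dim(B_1^kC)=\dim\bigl(R_1^kM/R_1^{k+1}M\bigr)$.

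It then remains to prove the purely even statement $\dim\bigl(R_1^kM/R_1^{k+1}M\bigr)=\dim(R_1^kM)$ for $k\ge1$. One inclusion of annihilators is clear, $R_1^kM/R_1^{k+1}M$ being a quotient of $R_1^kM$. For the other, if $r\in R_0$ satisfies $rR_1^kM\subseteq R_1^{k+1}M$, then an induction on $j$ (moving $r$ past the $R_0$-submodule $R_1^{j}$) gives $r^jR_1^kM\subseteq R_1^{k+j}M$ for all $j$, and nilpotence of $R_1^2$ makes the right-hand side zero for large $j$; hence $r\in\sqrt{\mathrm{Ann}_{R_0}(R_1^kM)}$. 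Thus the two annihilators have equal radicals, so equal Krull codimensions.

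Finally, take $k=\mathrm{sdim}_1(M)$, so $\dim(R_1^kM)=\dim(M)$ by Proposition~\ref{extension_of_prop}. Combining the last three paragraphs gives $\dim(B_1^kC)=\dim\bigl(R_1^kM/R_1^{k+1}M\bigr)=\dim(R_1^kM)=\dim(M)=\dim(C)$ (the last equality by Lemma~\ref{weaker_inequality_II}), and Proposition~\ref{extension_of_prop} applied to the $B$-supermodule $C$ yields $\mathrm{sdim}_1(C)\ge k$; with Lemma~\ref{weaker_inequality_II} this gives $\mathrm{sdim}(M)=\mathrm{sdim}(C)$, and in particular, via Proposition~\ref{extension_of_prop}(2) or Proposition~\ref{factoring by parameters}, lifts of a longest system of odd parameters of $C$ inside $R_1/R_1^3$ form one of $M$. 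I expect the only genuinely delicate point to be the simultaneous $\mathbb{Z}$- and $\mathbb{Z}_2$-grading bookkeeping needed for $B_1^kC=\bigoplus_{n\ge k}C(n)$ and for the degree-zero reduction of annihilators; once $I_R$ is seen to be nilpotent, everything else is a chain of Nakayama-type arguments. (One could instead descend through the localizations of Proposition~\ref{useful_inequality_II} and Corollary~\ref{another def of odd dim} to the Artinian case handled by Lemma~\ref{over_Artinian}, but the graded computation above seems more direct.)
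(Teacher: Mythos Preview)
Your proof is correct and takes a somewhat different route from the paper's. The paper first picks a longest system of odd parameters of $M$ subordinating a prime $\mathfrak{q}$, localizes at $\mathfrak{Q}=\mathfrak{q}\oplus R_1$ (using that $\mathsf{gr}_I$ commutes with this localization), and then works in the resulting setting where $R/\mathrm{Ann}_R(M)$ is Artinian; there, by Lemma~\ref{over_Artinian}, one has $\mathrm{sdim}_1(M)=\max\{l:R_1^lM\neq0\}$ and $\mathrm{sdim}_1(C)=\max\{l:B_1^lC\neq0\}$, and the identification of these two maxima via $C(l)\neq 0\Leftrightarrow R_1^lM\neq R_1^{l+1}M$ is essentially your computation $B_1^kC=\bigoplus_{n\ge k}C(n)$ specialized to the case where ``dimension equals $\dim(M)$'' collapses to ``nonzero''. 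You instead track Krull dimensions of annihilators directly, without localizing, reducing everything to the equality of radicals $\sqrt{\mathrm{Ann}_{R_0}(R_1^kM)}=\sqrt{\mathrm{Ann}_{R_0}(R_1^kM/R_1^{k+1}M)}$. This is a bit more self-contained (no passage through Proposition~\ref{useful_inequality_II}), and your closing parenthetical in fact describes exactly the paper's approach. Both arguments rest on the same two features special to $I=I_R$: that $B(1)$ is purely odd and generates $B_1$ over $B_0$, and that $I_R$ (hence $\bigoplus_{n\ge1}B(n)$) is nilpotent.
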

\begin{proof}
Let $y_1, \ldots , y_t$ be the same elements as in Lemma \ref{weaker_inequality_II}, and assume that
for some $1\leq i_1<\ldots<i_l\leq t$ the elements $y_{i_1}, \ldots, y_{i_l}$ form a longest system of odd parameters of $M$. Let $\mathfrak{q}$ be a prime ideal of $R_0$ to which this system subordinates. Set $\mathfrak{Q}=\mathfrak{q}\oplus R_1$. 

All we need is to show that $R_{\mathfrak{Q}}$-supermodule $M_{\mathfrak{Q}}$ satisfies the conclusion of our theorem. In fact, if it is the case, then 
\[\mathrm{sdim}_1(M)=\mathrm{sdim}_1(M_{\mathfrak{Q}})=\mathrm{sdim}_1(\mathsf{gr}_{I_{\mathfrak{Q}}}(M_{\mathfrak{Q}}))=\mathrm{sdim}_1(\mathsf{gr}_{I}(M)_{\widetilde{\mathfrak{Q}}})\]\[\leq\mathrm{sdim}_1(\mathsf{gr}_{I}(M)),\]
and by Lemma \ref{weaker_inequality_II} $M$ satisfies the conclusion of our theorem as well. 

Therefore, without loss of a generality, one can assume that $R$ is a local super-ring with the maximal superideal $\mathfrak{Q}$ such that $\mathfrak{Q}/\mathrm{Ann}_R(M)$ is the unique prime superideal of the Artinian super-ring $R/\mathrm{Ann}_R(M)$. Arguing as in Proposition \ref{useful_inequality_II}, we obtain $\mathrm{sdim}_1(M)=\max\{l\mid R_1^l M\neq 0\}$. On the other hand, $B=\mathsf{gr}_I(R)$ is a local super-ring with the maximal superideal $\widetilde{\mathfrak{Q}}$ and the arguments of Lemma \ref{weaker_inequality_II} show that $\widetilde{\mathfrak{Q}}/\mathrm{Ann}_B(C)$ is the unique prime superideal of the Artinian super-ring
$B/\mathrm{Ann}_B(C)$ as well. In particular,  $\mathrm{sdim}_1(C)=\max\{l\mid B_1^l C\neq 0\}$.
Then we have 
\[\max\{l\mid (B_1)^l C\neq 0\}=\max\{l\mid B(1)^l C\neq 0\}=\max\{l\mid C(l)\neq 0\}.\]
On the other hand, $(R_1)^l M\neq 0$ if and only if $(R_1)^l M\neq (R_1)^{l+1} M$ if and only if $C(l)\neq 0$, hence our theorem follows. 
\end{proof}
\begin{cor}\label{corollary for super-rings}
For any super-ring $R$ there holds $\mathrm{Ksdim}(R)=\mathrm{Ksdim}(\mathsf{gr}_{I_R}(R))$.
\end{cor}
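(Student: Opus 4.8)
The plan is to deduce this from Theorem \ref{grading_and_dimension_II} applied to the left regular supermodule $M=R$. First I would check that all the standing hypotheses are in force. By Lemma \ref{graded is still Noetherian} the super-ring $\mathsf{gr}_{I_R}(R)$ is Noetherian and satisfies $\mathrm{Ksdim}_0(\mathsf{gr}_{I_R}(R))<\infty$, so its Krull super-dimension is defined. Moreover the superideal $I_R=RR_1$ is automatically contained in $\mathsf{rad}(R)$: every prime superideal of $R$ has the form $\mathfrak{p}\oplus R_1$ and hence contains $R_1$, so the ideal $RR_1$ generated by $R_1$ lies in the intersection of all maximal superideals. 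Thus Theorem \ref{grading_and_dimension_II} is applicable with $I=I_R$.

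Next I would take $M=R$, regarded as a finitely generated (indeed cyclic) left $R$-supermodule. Directly from the definition, $\mathsf{gr}_{I_R}(M)=\oplus_{n\geq 0} I_R^n R/I_R^{n+1}R=\mathsf{gr}_{I_R}(R)$, and the $\mathsf{gr}_{I_R}(R)$-supermodule structure on it is precisely the left regular structure of the super-ring $\mathsf{gr}_{I_R}(R)$. Hence Theorem \ref{grading_and_dimension_II} yields the equality $\mathrm{sdim}_R(R)=\mathrm{sdim}_{\mathsf{gr}_{I_R}(R)}(\mathsf{gr}_{I_R}(R))$ of the super-dimensions computed in the respective module categories.

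Finally I would invoke the observation recorded just after the definition of super-dimension: for any super-ring $S$ one has $\mathrm{Ann}_S(S)=0$, so $\mathrm{sdim}_S(S)=\mathrm{Ksdim}(S/\mathrm{Ann}_S(S))=\mathrm{Ksdim}(S)$, and in fact the systems of odd parameters of $S$ as a module over itself coincide with those of $S$ as a super-ring. Applying this to $S=R$ and to $S=\mathsf{gr}_{I_R}(R)$ turns the displayed equality into $\mathrm{Ksdim}(R)=\mathrm{Ksdim}(\mathsf{gr}_{I_R}(R))$, which is the assertion of the corollary.

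There is essentially no obstacle: the only points requiring care are the bookkeeping identifications (that $\mathsf{gr}_{I_R}(R)$ viewed as a graded super-ring and as the associated graded module of $R$ over itself carry the same module structure, and that the standing Noetherian and finite-even-dimension hypotheses pass to $\mathsf{gr}_{I_R}(R)$), and these are immediate from the definitions and from Lemma \ref{graded is still Noetherian}. The substantive content of the corollary is already packaged in Theorem \ref{grading_and_dimension_II}.
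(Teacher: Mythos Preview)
Your proposal is correct and follows exactly the route the paper intends: the corollary is stated immediately after Theorem \ref{grading_and_dimension_II} with no separate proof, so applying that theorem to the left regular supermodule $M=R$ and invoking the identification $\mathrm{sdim}(S)=\mathrm{Ksdim}(S)$ for $S=R$ and $S=\mathsf{gr}_{I_R}(R)$ is precisely what is meant. Your verification of the standing hypotheses (Noetherianity via Lemma \ref{graded is still Noetherian}, and $I_R\subseteq\mathsf{rad}(R)$) is the only bookkeeping needed, and it is handled correctly.
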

\begin{cor}
If the elements $z_1, \ldots , z_t\in I_R/I_R^2\simeq R_1/R_1^3$ form a longest system of odd parameters of the supermodule
$\mathsf{gr}_{I_R}(M)$, then their representatives $y_1, \ldots , y_t$ form a longest system of odd parameters of $M$.	
\end{cor}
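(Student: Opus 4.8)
The plan is to derive the claim from Theorem~\ref{grading_and_dimension_II} together with a squeeze on even dimensions. Put $B=\mathsf{gr}_{I_R}(R)$ and $C=\mathsf{gr}_{I_R}(M)$, and let $y_1,\dots,y_t\in R_1$ be representatives of the given odd elements $z_1,\dots,z_t\in I_R/I_R^2\simeq R_1/R_1^3$. By Theorem~\ref{grading_and_dimension_II} one has $\mathrm{sdim}_1(M)=\mathrm{sdim}_1(C)$, and since $z_1,\dots,z_t$ is a \emph{longest} system of odd parameters of $C$ this common value equals $t$. Consequently a system of odd parameters of $M$ of cardinality $t$ is automatically a longest one, so it suffices to prove merely that $y_1,\dots,y_t$ is a system of odd parameters of $M$, i.e.\ that $\dim_{R_0}(y^{\underline{t}}M)=\dim_{R_0}(M)$.

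Next I would set up the grading bookkeeping. Since $R$ is Noetherian, $R_1^2$ is a locally nilpotent, hence nilpotent, ideal of $R_0$ (argue exactly as in Proposition~\ref{characterization_of_Artinian}); as $I_R^n=RR_1^n$, the superideal $I_R$ is itself nilpotent, and therefore the irrelevant ideal $\bigoplus_{n\ge 1}B(n)$ of $B$ is nilpotent too. As noted above in this section, this gives $\mathrm{sdim}_0(C)=\mathrm{Kdim}\big(B(0)_0/\mathrm{Ann}_{B(0)_0}(C(0))\big)$, and Lemma~\ref{on even dimension}(1), applied to the nilpotent $I_R$, identifies the right-hand side with $\mathrm{sdim}_0(M/I_RM)=\mathrm{sdim}_0(M)=\dim_{R_0}(M)$. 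Now $z^{\underline{t}}=z_1\cdots z_t$ is the image of $y^{\underline{t}}=y_1\cdots y_t$ in $B(t)=I_R^t/I_R^{t+1}$, so $z^{\underline{t}}C(0)=(y^{\underline{t}}M+I_R^{t+1}M)/I_R^{t+1}M$; this is simultaneously a quotient $R_0$-module of $y^{\underline{t}}M$ and a $B(0)_0$-module (it is annihilated by $R_1^2$). Moreover the graded $B$-submodule $z^{\underline{t}}C$ of $C$ is generated over $B$ by its lowest-degree component $z^{\underline{t}}C(0)$, because $C(n)=B(1)^nC(0)$ and $z^{\underline{t}}$ is central up to a sign; running the same nilpotency argument as above, now for $z^{\underline{t}}C$, gives $\dim_{B_0}(z^{\underline{t}}C)=\mathrm{Kdim}\big(B(0)_0/\mathrm{Ann}_{B(0)_0}(z^{\underline{t}}C(0))\big)=\dim_{R_0}(z^{\underline{t}}C(0))$.

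The squeeze is then immediate. The hypothesis that $z_1,\dots,z_t$ is a system of odd parameters of $C$ reads $\dim_{B_0}(z^{\underline{t}}C)=\dim_{B_0}(C)=\mathrm{sdim}_0(C)$, so $\dim_{R_0}(z^{\underline{t}}C(0))=\dim_{R_0}(M)$. Since $z^{\underline{t}}C(0)$ is a quotient of $y^{\underline{t}}M$ we get $\dim_{R_0}(M)\le\dim_{R_0}(y^{\underline{t}}M)$, while $y^{\underline{t}}M$ being an $R_0$-submodule of $M$ gives $\dim_{R_0}(y^{\underline{t}}M)\le\dim_{R_0}(M)$. Hence $\dim_{R_0}(y^{\underline{t}}M)=\dim_{R_0}(M)$, i.e.\ $y_1,\dots,y_t$ satisfies $(\star)$; combined with $\mathrm{sdim}_1(M)=t$ from the first paragraph, $y_1,\dots,y_t$ is a longest system of odd parameters of $M$.

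The reduction and the final squeeze are formal; the one step deserving care — which I would flag as the main (minor) obstacle — is the identity $\dim_{B_0}(z^{\underline{t}}C)=\dim_{R_0}(z^{\underline{t}}C(0))$. One has to verify that $z^{\underline{t}}C$ really is generated over $B$ by its bottom piece, that modulo the nilpotent irrelevant ideal the annihilator $\mathrm{Ann}_{B_0}(z^{\underline{t}}C)$ collapses onto $\mathrm{Ann}_{B(0)_0}(z^{\underline{t}}C(0))$ (using that the even elements of $B(0)_0$ are central, so they annihilate $z^{\underline{t}}C$ as soon as they annihilate its bottom component), and that passing from $B(0)_0=R_0/R_1^2$ to $R_0$ leaves the Krull dimension unchanged because $z^{\underline{t}}C(0)$ is $R_1^2$-torsion. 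Everything else follows formally from Theorem~\ref{grading_and_dimension_II} and Lemma~\ref{on even dimension}.
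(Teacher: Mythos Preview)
Your argument is correct and is essentially the approach the paper has in mind: the corollary is stated without proof, but the mechanism is precisely the one already worked out in the proof of Lemma~\ref{weaker_inequality_II}, namely the inclusion $(\mathrm{Ann}_{R_0}(y^{\underline{t}}M)+I_0)/I_0\subseteq \mathrm{Ann}_{B(0)_0}(z^{\underline{t}}C)$, which you repackage as the dimension squeeze via the quotient $z^{\underline{t}}C(0)$ of $y^{\underline{t}}M$, together with the equality $\mathrm{sdim}_1(M)=\mathrm{sdim}_1(C)$ from Theorem~\ref{grading_and_dimension_II}. The only step you flag as delicate, $\dim_{B_0}(z^{\underline{t}}C)=\dim_{R_0}(z^{\underline{t}}C(0))$, is indeed fine: $z^{\underline{t}}C$ is a graded $B$-submodule generated by its bottom piece $z^{\underline{t}}C(0)$, so $\mathrm{Ann}_{B_0}(z^{\underline{t}}C)$ is graded with degree-zero part $\mathrm{Ann}_{B(0)_0}(z^{\underline{t}}C(0))$, and passing from $R_0$ to $B(0)_0=R_0/R_1^2$ changes nothing since $R_1^2$ already annihilates $C(t)$.
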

\begin{example}
Let $M$ be the $A[Z_1, Z_2, Z_3, Y]$-supermodule from the final part of the third section. Set $I=RY$. Then $R\simeq\mathsf{gr}_I(R)$ and $\mathrm{sdim}_1(\mathsf{gr}_I(M))=2 <\mathrm{sdim}_1(M)$. Indeed, $M/YM\simeq V$ and $YM\simeq\Pi V$ as $A$-supermodules. Moreover, each $Z_i$ acts on them by the rule $Z_i(v)=\phi_i(v), Z_i(\Pi v)=-\Pi(\phi_i(v)), v\in V$. In particular, any product $Z_i Z_j$ acts trivially on $\mathsf{gr}_I(M)$, hence $Z_1, Y$ is one of the longest systems of odd parameters   
of $\mathsf{gr}_I(M)$.
\end{example}

Let $X$ be an irreducible superscheme of finite type over a field $K$. Recall that the \emph{super-dimension} of 
$X$ is defined as $\mathrm{Ksdim}(\mathcal{O}_X(U))$, where $U$ is arbitrary (nonempty) open affine super-subscheme of $X$ (cf. \cite{zubmas}). 

Let $\mathcal{J}$ be a locally nilpotent quasi-coherent superideal sheaf on $X$. Then one can associate with $X$ an 
irreducible superscheme $\mathsf{gr}_{\mathcal{J}}(X)$ with the same \emph{underlying topological space} $X^e$ and with the structural superalgeba sheaf to be the sheafification of the presheaf $\oplus_{n\geq 0}\mathcal{J}^n/\mathcal{J}^{n+1}$.   
The functor $X\to \mathsf{gr}_{\mathcal{J}}(X)$ is an endofunctor of the category of superschemes of finite type that takes immersions to immersions. Moreover, if $\mathcal{J}=\mathcal{I}_X=(\mathcal{O}_X(\mathcal{O}_X)_1)^+$, a superscheme morphism $f : X\to Y$ is an isomorphism if and only if $\mathsf{gr}_{\mathcal{I}_X}(f)$ is (see \cite{maszub2}, Proposition 9.3). 
\begin{pr}
For any locally nilpotent quasi-coherent superideal sheaf $\mathcal{J}$ on $X$ there holds 
\[\mathrm{sdim}_0(X)=\mathrm{sdim}_0(\mathsf{gr}_{\mathcal{J}}(X)), \ \mathrm{sdim}_1(X)\geq\mathrm{sdim}_1(\mathsf{gr}_{\mathcal{J}}(X)),\]
and 
\[\mathrm{sdim}(X)=\mathrm{sdim}(\mathsf{gr}_{\mathcal{I}_X}(X)).\]
\end{pr}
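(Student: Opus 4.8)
The plan is to localize everything to a single affine chart and then invoke the module-theoretic results already established, applied to the structure super-ring regarded as a module over itself. Fix a nonempty open affine super-subscheme $U=\mathrm{SSpec}(R)$ of $X$. Since $X$ has finite type over $K$, $R$ is a finitely generated super-commutative $K$-superalgebra, hence module-finite over a finitely generated $K$-subalgebra $R'\subseteq R_0$ (take $R'$ generated by the images of the even polynomial generators); thus $R$ and $R_0$ are Noetherian and $\mathrm{Kdim}(R_0)=\mathrm{Kdim}(R')<\infty$, so every hypothesis imposed on $R$ in the previous sections is satisfied. By quasi-coherence of $\mathcal{J}$, the restriction $\mathcal{J}|_U$ is the superideal sheaf attached to a superideal $I$ of $R$, and local nilpotency of $\mathcal{J}$ together with the Noetherian property forces $I$ to be nilpotent; in particular $I\subseteq\mathsf{rad}(R)$, and $I=I_R$ when $\mathcal{J}=\mathcal{I}_X$.

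Next I would identify $\mathsf{gr}_{\mathcal{J}}(X)|_U$ with $\mathrm{SSpec}(\mathsf{gr}_I(R))$. The associated-graded construction commutes with localization at even elements: for $f\in R_0$ one has $\mathcal{J}(D(f))=I_f$ and, localization being exact,
\[\bigoplus_{k\ge 0}\mathcal{J}(D(f))^k/\mathcal{J}(D(f))^{k+1}=\bigoplus_{k\ge 0}(I^k)_f/(I^{k+1})_f=(\mathsf{gr}_I R)_f=\mathcal{O}_{\mathrm{SSpec}(\mathsf{gr}_I R)}(D(f)).\]
The principal opens $D(f)$, $f\in R_0$, form a basis of the common underlying space $U^e=\mathrm{SSpec}(\mathsf{gr}_I R)^e$ (the identification of these spaces uses $I\subseteq\mathsf{rad}(R)$, as recalled in Section~5), and on this basis the naive associated-graded presheaf already agrees with the sheaf $\mathcal{O}_{\mathrm{SSpec}(\mathsf{gr}_I R)}$; hence so does its sheafification. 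This is precisely the local model used to define the functor $\mathsf{gr}$ in \cite{maszub2}. By Lemma~\ref{graded is still Noetherian}, $\mathsf{gr}_I(R)$ is a quotient of $(R/I)[X_1,\ldots,X_r\mid Y_1,\ldots,Y_s]$, hence of finite type over $K$, so $\mathsf{gr}_{\mathcal{J}}(X)$ is an irreducible superscheme of finite type (its underlying space is $X^e$), its super-dimension is well defined (cf. \cite{zubmas}) and may be computed on $U$, giving $\mathrm{sdim}(\mathsf{gr}_{\mathcal{J}}(X))=\mathrm{Ksdim}(\mathsf{gr}_I(R))$; likewise $\mathrm{sdim}(X)=\mathrm{Ksdim}(R)$.

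Finally, since $\mathrm{Ann}_R(R)=0$, the two super-dimensions just obtained equal $\mathrm{sdim}(R)$ and $\mathrm{sdim}(\mathsf{gr}_I(R))$ for the respective super-rings regarded as modules over themselves. Applying Lemma~\ref{weaker_inequality_II} with $M=R$ --- legitimate because $R$ is a finitely generated $R$-supermodule and $I\subseteq\mathsf{rad}(R)$ --- yields $\mathrm{sdim}_0(X)=\mathrm{sdim}_0(\mathsf{gr}_{\mathcal{J}}(X))$ and $\mathrm{sdim}_1(X)\ge\mathrm{sdim}_1(\mathsf{gr}_{\mathcal{J}}(X))$. When $\mathcal{J}=\mathcal{I}_X$, so $I=I_R$, Theorem~\ref{grading_and_dimension_II} (equivalently Corollary~\ref{corollary for super-rings}) sharpens the inequality to $\mathrm{sdim}(X)=\mathrm{sdim}(\mathsf{gr}_{\mathcal{I}_X}(X))$. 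I expect the genuinely technical step to be the middle one --- verifying that the sheafification of $V\mapsto\bigoplus_k\mathcal{J}(V)^k/\mathcal{J}(V)^{k+1}$ really is the structure sheaf of $\mathrm{SSpec}(\mathsf{gr}_I R)$ over an affine chart and that these local models glue, which is where quasi-coherence of $\mathcal{J}$ is used; everything after that is a direct transcription of results already established for supermodules.
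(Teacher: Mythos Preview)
Your proposal is correct and follows the same route as the paper: reduce to an affine chart and then invoke the results of Section~5 for the super-ring viewed as a module over itself. The paper's proof is terser---it cites \cite{maszub2}, Proposition~9.3(1) for the reduction to the affine case and then only Corollary~\ref{corollary for super-rings}---whereas you supply the affine reduction explicitly and, more carefully than the paper, separate the general-$\mathcal{J}$ inequality (Lemma~\ref{weaker_inequality_II}) from the $\mathcal{I}_X$ equality (Corollary~\ref{corollary for super-rings}).
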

\begin{proof}
Arguing as in the proof of Proposition 9.3 (1) , \cite{maszub2}, one can assume that $X$ is affine, say $X\simeq\mathrm{SSpec}(A)$. Then Corollary \ref{corollary for super-rings} concludes the proof.	
\end{proof}

\section{Dimension theory of certain local superalgebras}

Let $A$ be a Noetherian super-ring, $I$ be a superideal of $A$ and $M$ be an $A$-supermodule. The $I$-adic completions $\varprojlim_n A/I^n $ and $\varprojlim_n M/I^n M$ of $A$ and $M$ are denoted by $\widehat{A}$ and $\widehat{M}$ respectively.
It is evident that $\widehat{M}$ has a natural structure of $\widehat{A}$-supermodule.
\begin{lm}\label{completion as a functor}
The following statements hold :
\begin{enumerate}
\item If $M$ is a finitely generated $A$-supermodule, then $\widehat{M}$ is canonically isomorphic to $M\otimes_A\widehat{A}$.
\item $\widehat{A}$ is a flat $A$-supermodule. 
\end{enumerate}	
\end{lm}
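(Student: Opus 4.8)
The plan is to transcribe the classical proof (cf.\ \cite{mats}, \S 55, or \cite{weib}) into the super-setting, the point being that the category of $A$-supermodules is abelian with all the formal properties of an ordinary module category, so that super-commutativity contributes at most harmless signs. Concretely I would proceed in three stages: first establish the super-analogue of the Artin--Rees lemma and deduce that $I$-adic completion is an exact functor on finitely generated $A$-supermodules; then obtain (1) by a five-lemma argument applied to a finite free presentation of $M$; and finally deduce (2) from (1) together with the ideal-theoretic criterion for flatness.

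For the super Artin--Rees lemma: writing $I=Ax_1+\dots+Ax_r+Ay_1+\dots+Ay_s$ with the $x_i$ even and the $y_j$ odd, I would form the Rees super-ring $A^{\ast}=\bigoplus_{n\ge 0}I^{n}t^{n}\subseteq A[t]$, which is a graded super-ring finitely generated over $A$ and hence Noetherian (by the same reasoning as in the proof of Lemma \ref{graded is still Noetherian}). For a finitely generated $A$-supermodule $M$ with super-submodule $N$, the graded $A^{\ast}$-supermodule $\bigoplus_{n\ge 0}I^{n}M$ is finitely generated and $\bigoplus_{n\ge 0}(I^{n}M\cap N)$ is a graded super-submodule of it, hence also finitely generated; exactly as in the commutative case this gives a constant $c$ with $I^{n}M\cap N=I^{\,n-c}(I^{c}M\cap N)$ for all $n\ge c$. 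In particular the filtrations $\{I^{n}N\}_n$ and $\{I^{n}M\cap N\}_n$ of $N$ are mutually cofinal, so $\varprojlim_n N/(I^{n}M\cap N)=\widehat{N}$, and for a short exact sequence $0\to N\to M\to M/N\to 0$ of finitely generated supermodules each $0\to N/(I^{n}M\cap N)\to M/I^{n}M\to (M/N)/I^{n}(M/N)\to 0$ is exact with surjective transition maps on the left-hand terms; passing to the inverse limit (which is exact on such systems in the abelian category of supermodules) yields an exact sequence $0\to\widehat{N}\to\widehat{M}\to\widehat{M/N}\to 0$. Thus $M\mapsto\widehat{M}$ is exact on finitely generated $A$-supermodules.

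For (1), there is an evident natural morphism $\theta_{M}\colon M\otimes_{A}\widehat{A}\to\widehat{M}$ of $\widehat{A}$-supermodules sending $m\otimes(a_n)_n$ to $(a_nm)_n$, and it is an isomorphism when $M$ is finite free since both functors commute with finite direct sums. Choosing (using that $A$ is Noetherian and $M$ finitely generated) a finite free presentation $F_1\to F_0\to M\to 0$, tensoring with $\widehat{A}$ (right exact) and completing (exact by the previous stage) produces a commutative diagram with exact rows and vertical maps $\theta_{F_1},\theta_{F_0},\theta_M$; since $\theta_{F_1}$ is surjective and $\theta_{F_0}$ an isomorphism, $\theta_M$ is an isomorphism. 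For (2), by the ideal criterion for flatness -- which holds in the category of $A$-supermodules for the usual formal reasons (an abelian category with exact filtered colimits and a free generator), and because every superideal of the Noetherian super-ring $A$ is finitely generated -- it suffices to check that $\mathfrak{a}\otimes_{A}\widehat{A}\to\widehat{A}$ is injective for each superideal $\mathfrak{a}\subseteq A$; applying (1) to the inclusion $\mathfrak{a}\hookrightarrow A$ identifies this map with $\widehat{\mathfrak{a}}\to\widehat{A}$, which is injective by the exactness of completion. Hence $\widehat{A}$ is $A$-flat.

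I do not expect a genuine obstacle here: the whole argument is the commutative one with a $\mathbb{Z}_2$-grading carried along. The only points deserving a second look are the Noetherianity of the Rees super-ring and of the relevant associated graded supermodules (handled exactly as in Lemma \ref{graded is still Noetherian}), and the verification that the ideal criterion for flatness and the exactness of the pertinent inverse and direct limits really do hold in the category of supermodules -- which they do, since that category shares all the relevant exactness and generation properties with an ordinary module category, super-commutativity introducing at most signs.
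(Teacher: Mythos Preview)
Your proposal is correct and takes essentially the same approach as the paper: the paper's proof consists solely of the instruction ``Use Proposition~1.9, \cite{zubmas}, and copy the proof of Theorem~55 from \cite{mats}'', and you have carried out precisely that transcription, supplying the super Artin--Rees lemma (which is the content of the cited Proposition~1.9) via the Rees super-ring and then running the standard exactness/five-lemma/ideal-criterion argument.
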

\begin{proof}
Use Proposition 1.9, \cite{zubmas}, and copy the proof of Theorem 55 from \cite{mats}.		
\end{proof}

Let $R$ be a local Noetherian super-ring with the maximal superideal $\mathfrak{M}=\mathfrak{m}\oplus R_1$. Let $\mathfrak{N}$ be a (not necessary prime) $\mathfrak{M}$-\emph{primary} superideal, that is 
$\mathfrak{N}_0=\mathfrak{n}$ is a $\mathfrak{m}$-primary ideal. 
\begin{lm}\label{characterization of primary}
A superideal $\mathfrak{N}$ is $\mathfrak{M}$-primary if and only if $R/\mathfrak{N}$ is Artinian if and only
if $\mathfrak{M}^t\subseteq\mathfrak{N}$ for some positive integer $t$.	
\end{lm}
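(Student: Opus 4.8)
The plan is to establish the two equivalences $(1)\Leftrightarrow(2)$ and $(2)\Leftrightarrow(3)$ separately, reducing each to the classical Noetherian/Artinian dictionary for the local ring $(R_0,\mathfrak{m})$ together with the characterizations of Artinian super-rings from Section~1.

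First I would treat $(1)\Leftrightarrow(2)$. The quotient $R/\mathfrak{N}$ is Noetherian, so by Corollary~\ref{third_characterization} it is Artinian if and only if $\mathrm{Kdim}((R/\mathfrak{N})_0)=\mathrm{Kdim}(R_0/\mathfrak{n})=0$. For the Noetherian local ring $(R_0,\mathfrak{m})$, having $\mathrm{Kdim}(R_0/\mathfrak{n})=0$ means that $\mathfrak{m}/\mathfrak{n}$ is the unique prime of $R_0/\mathfrak{n}$, equivalently $\sqrt{\mathfrak{n}}=\mathfrak{m}$; and since $\mathfrak{m}$ is maximal this is precisely the statement that $\mathfrak{n}$ is $\mathfrak{m}$-primary, i.e. that $\mathfrak{N}$ is $\mathfrak{M}$-primary. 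This gives $(1)\Leftrightarrow(2)$ once the classical facts are quoted.

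Next, $(3)\Rightarrow(2)$: if $\mathfrak{M}^t\subseteq\mathfrak{N}$ then $R/\mathfrak{N}$ is a quotient of $R/\mathfrak{M}^t$, and since quotients of Artinian super-rings are Artinian (the DCC on superideals passes to quotients) it is enough to show $R/\mathfrak{M}^t$ is Artinian. I would apply Lemma~\ref{first_characterization}: because $\mathfrak{m}^t\subseteq\mathfrak{M}^t$ and $\mathfrak{m}^t$ is purely even, $\mathfrak{m}^t\subseteq(\mathfrak{M}^t)_0$, so $(R/\mathfrak{M}^t)_0=R_0/(\mathfrak{M}^t)_0$ is a quotient of the Artinian ring $R_0/\mathfrak{m}^t$ and hence Artinian; and since $R_1$ is a finitely generated $R_0$-module (as $R$ is Noetherian) with $(\mathfrak{M}^t)_0R_1\subseteq\mathfrak{M}^t\cap R_1=(\mathfrak{M}^t)_1$, the odd part $R_1/(\mathfrak{M}^t)_1$ is a finitely generated module over the Artinian ring $R_0/(\mathfrak{M}^t)_0$, hence an Artinian module. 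Lemma~\ref{first_characterization} then yields that $R/\mathfrak{M}^t$ is Artinian.

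Finally, $(2)\Rightarrow(3)$: if $R/\mathfrak{N}$ is Artinian it is \emph{local} Artinian with unique maximal superideal $\mathfrak{M}/\mathfrak{N}$, and specializing the construction in the proof of Proposition~\ref{characterization_of_Artinian} to the single maximal superideal $\mathfrak{M}/\mathfrak{N}$ yields $(\mathfrak{M}/\mathfrak{N})^N=0$ for some $N$, that is $\mathfrak{M}^N\subseteq\mathfrak{N}$. I expect the only genuinely ``super'' points --- and the main thing to be careful about --- to be the bookkeeping with the even/odd components of powers of $\mathfrak{M}$: that $\mathfrak{m}^t\subseteq(\mathfrak{M}^t)_0$, that $(\mathfrak{M}^t)_0R_1\subseteq(\mathfrak{M}^t)_1$, and that the ideal generated by the odd elements is nilpotent modulo $\mathfrak{N}$; all of these are already isolated in Section~1, so no new obstacle is anticipated beyond keeping this bookkeeping straight.
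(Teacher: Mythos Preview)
Your argument is correct and follows essentially the same path as the paper: both use Corollary~\ref{third_characterization} for $(1)\Leftrightarrow(2)$, and both reduce the condition $\mathfrak{M}^t\subseteq\mathfrak{N}$ to its classical even counterpart $\mathfrak{m}^t\subseteq\mathfrak{n}$. The only difference is packaging: where you spell out $(2)\Leftrightarrow(3)$ explicitly from the Section~1 results, the paper simply invokes Lemma~1.5 of \cite{zubmas} (that the $\mathfrak{M}$-adic and $\mathfrak{m}$-adic topologies coincide, and likewise for $\mathfrak{N}$ and $\mathfrak{n}$), which encodes the same bookkeeping you carry out by hand.
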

\begin{proof}
Use Corollary \ref{third_characterization} and note that by Lemma 1.5, \cite{zubmas}, the $\mathfrak{M}$-adic topology on $R$ coincides with the $\mathfrak{m}$-adic topology as well as the $\mathfrak{N}$-adic topology on $R$ coincides with the $\mathfrak{n}$-adic topology. 	
\end{proof}
From now on we assume that $R$ contains a field, unless stated otherwise.
\begin{lm}\label{integral over nilpotents}
Let $A$ be a Noetherian ring and I be a nilpotent ideal of $A$. If $B$ is a subring of $A$ such that $A/I$ is a finitely generated $(B+I)/I$-module, then $A$ is a finitely generated $B$-module also.
\end{lm}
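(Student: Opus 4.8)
The plan is to filter $A$ by the powers of $I$. Since $I$ is nilpotent, fix $k\ge 1$ with $I^{k}=0$, giving a finite chain $A=I^{0}\supseteq I\supseteq I^{2}\supseteq\cdots\supseteq I^{k}=0$. I will show that each successive quotient $I^{j}/I^{j+1}$ is a finitely generated $B$-module; finiteness of $A$ over $B$ then follows by a downward induction on $j$ from $k-1$ to $0$, using the elementary fact that in a short exact sequence $0\to I^{j+1}\to I^{j}\to I^{j}/I^{j+1}\to 0$ of $B$-modules the middle term is finitely generated as soon as the two outer ones are.

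First I would record two preliminary observations. Writing $\overline{A}=A/I$ and $\overline{B}=(B+I)/I$, the hypothesis says that $\overline{A}$ is a finitely generated $\overline{B}$-module; restricting scalars along the ring surjection $B\twoheadrightarrow\overline{B}$ shows that $\overline{A}$ is a finitely generated $B$-module, say generated by the images of $e_{1},\dots,e_{p}\in A$, which is precisely the bottom case $j=0$. Secondly, since $A$ is Noetherian, every ideal $I^{j}$ is finitely generated as an $A$-module, say $I^{j}=\sum_{s}A d_{j,s}$ with $d_{j,s}\in I^{j}$; and since $I\cdot I^{j}=I^{j+1}$, multiplication by $I$ annihilates $I^{j}/I^{j+1}$, so $I^{j}/I^{j+1}$ is a finitely generated $A/I$-module, generated by the images of the $d_{j,s}$.

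Combining these, $I^{j}/I^{j+1}$ is finitely generated over $A/I$ and $A/I$ is finitely generated over $B$, hence $I^{j}/I^{j+1}$ is finitely generated over $B$ — concretely it is spanned over $B$ by the finitely many elements $e_{q}d_{j,s}$. Feeding this into the downward induction yields that $I^{k-1}, I^{k-2},\dots, I, A$ are successively finitely generated $B$-modules, which is the assertion. I do not anticipate a genuine obstacle: Noetherianity is used only to know that each $I^{j}$ is a finitely generated ideal, nilpotence only to make the filtration terminate, and the single point worth stating carefully is the harmless transfer of finite generation from $\overline{B}$ to $B$. If one prefers to sidestep quotients, an equivalent argument is to choose $a_{1},\dots,a_{n}\in A$ with $A=\sum_{i}B a_{i}+I$ and generators $c_{1},\dots,c_{r}$ of the ideal $I$, let $N\subseteq A$ be the $B$-submodule generated by all products $a_{i}c_{j_{1}}\cdots c_{j_{m}}$ with $m<k$, check the inclusion $I^{m}\subseteq N+I^{m+1}$ for every $m\ge 0$, and conclude $A=N+I=N+I^{2}=\cdots=N+I^{k}=N$.
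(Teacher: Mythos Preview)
Your argument is correct and follows essentially the same approach as the paper: filter $A$ by the powers of $I$, observe that each quotient $I^{j}/I^{j+1}$ is a finitely generated $A/I$-module (by Noetherianity) and hence a finitely generated $B$-module (via the hypothesis and restriction of scalars), and then conclude by the finite filtration. The paper's proof is the one-line version of exactly this; your write-up simply makes the steps explicit, and your alternative generator-counting argument at the end is an equivalent repackaging.
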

\begin{proof}
Just note that each quotient $I^k/I^{k+1}$ is a finitely generated $B/I$-module, hence a finitely generated $B$-module as well. 	
\end{proof}	
\begin{lm}\label{even dim for bigraded}
Assume that the residue classes of the elements $x_1, \ldots, x_d$ form a system of (even) parameters of $\overline{R}=R_0/R_1^2$. If $\mathfrak{n}$ is generated by these elements modulo $R_1^2$, then
$d=\mathrm{Ksdim}_0(R)=\mathrm{Ksdim}_0(\mathsf{bgr}_{\mathfrak{N}}(R))$. 
\end{lm}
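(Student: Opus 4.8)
The plan is to reduce the statement about $\mathrm{Ksdim}_0$ to a statement about ordinary Krull dimensions of even parts of associated graded rings, using the classical dimension theory imported from \cite{mats}. First I would record that $\mathsf{bgr}_{\mathfrak{N}}(R)$ is a bigraded super-ring whose even part $\mathsf{bgr}_{\mathfrak{N}}(R)_0$ carries the induced bigrading; since $\mathsf{bgr}_{\mathfrak{N}}(R)$ is Noetherian with $\mathrm{Ksdim}_0<\infty$ by Lemma \ref{graded is still Noetherian}, all the quantities in sight are finite, so $\mathrm{Ksdim}_0(\mathsf{bgr}_{\mathfrak{N}}(R))$ is just the Krull dimension of that even ring. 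The key observation is that $R_1^2$ acts nilpotently, so modding out by the superideal it generates changes neither $\mathrm{Ksdim}_0(R)$ (by Lemma \ref{on even dimension}(1), or rather its super-ring analogue) nor $\mathrm{Ksdim}_0$ of the bigraded ring; hence one may pass to $\overline{R}=R/I_R$, a purely even local ring with maximal ideal $\overline{\mathfrak{m}}$, and the image $\overline{\mathfrak{n}}$ of $\mathfrak{n}$ is by hypothesis the ideal generated by the system of parameters $x_1,\dots,x_d$ of $\overline{R}$, so $\overline{\mathfrak{n}}$ is an $\overline{\mathfrak{m}}$-primary ideal.

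**The classical input.** Next I would invoke the standard fact (Theorem 22 / the dimension theory chapter of \cite{mats}) that for a Noetherian local ring $(\overline{R},\overline{\mathfrak{m}})$ and an $\overline{\mathfrak{m}}$-primary ideal $\mathfrak{q}$, the associated graded ring $\mathsf{gr}_{\mathfrak{q}}(\overline{R})=\oplus_k \mathfrak{q}^k/\mathfrak{q}^{k+1}$ has Krull dimension equal to $\dim\overline{R}=d$. Applied with $\mathfrak{q}=\overline{\mathfrak{n}}$ this gives $\mathrm{Kdim}(\mathsf{gr}_{\overline{\mathfrak{n}}}(\overline{R}))=d=\mathrm{Ksdim}_0(R)$, where the last equality is the definition of $\mathrm{Ksdim}_0$ together with the fact that passing from $R$ to $\overline{R}$ does not change $\mathrm{Ksdim}_0$. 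So the whole statement comes down to identifying $\mathsf{bgr}_{\mathfrak{N}}(R)_0$ with $\mathsf{gr}_{\overline{\mathfrak{n}}}(\overline{R})$ up to something that does not affect Krull dimension.

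**Comparing the two graded rings.** The main technical step, and the place I expect the real work to be, is the comparison between the even part of $\mathsf{bgr}_{\mathfrak{N}}(R)$ and $\mathsf{gr}_{\overline{\mathfrak{n}}}(\overline{R})$. By construction $\mathsf{bgr}_{\mathfrak{N}}(R)(k,l)=\mathfrak{N}_0^k\mathfrak{N}_1^l R/(\mathfrak{N}_0^{k+1}\mathfrak{N}_1^l R+\mathfrak{N}_0^k\mathfrak{N}_1^{l+1}R)$, so its even part is the direct sum over $k\geq 0$ and $l$ even of these pieces. There is an obvious surjection from the bigraded polynomial algebra $(R/\mathfrak{N})[X_1,\dots,X_r\mid Y_1,\dots,Y_s]$ onto $\mathsf{bgr}_{\mathfrak{N}}(R)$ (as in Lemma \ref{graded is still Noetherian}), and I would argue that the even part of $\mathsf{bgr}_{\mathfrak{N}}(R)$ is a finitely generated module over the subring generated by the $\mathfrak{N}_0$-adic associated graded, i.e. over a quotient of $(R/\mathfrak{N})_0[X_1,\dots,X_r]$, with the odd variables $Y_j$ contributing only nilpotent extra structure (since $Y_iY_j$-type products land, via $\mathfrak{N}_1^2\subseteq\mathfrak{N}_0\cdot(\text{stuff})$ after killing $I_R$, in lower-degree pieces or vanish). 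Then Lemma \ref{integral over nilpotents} together with Theorem 20 and Theorem 22 of \cite{mats} (integral/finite extensions preserve Krull dimension) yields $\mathrm{Kdim}(\mathsf{bgr}_{\mathfrak{N}}(R)_0)=\mathrm{Kdim}((R/\mathfrak{N})_0\text{-graded algebra generated by }X_i)=\mathrm{Kdim}(\mathsf{gr}_{\overline{\mathfrak{n}}}(\overline{R}))=d$.

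**Assembling.** Finally I would chain the equalities: $\mathrm{Ksdim}_0(\mathsf{bgr}_{\mathfrak{N}}(R)) = \mathrm{Kdim}(\mathsf{bgr}_{\mathfrak{N}}(R)_0) = d = \mathrm{Ksdim}_0(R)$, which is the assertion; the hypothesis that $\mathfrak{n}$ is generated modulo $R_1^2$ by a genuine system of parameters of $\overline{R}$ is exactly what makes $\overline{\mathfrak{n}}$ a parameter ideal, so that the classical theorem gives dimension $d$ rather than something smaller. The one subtlety to be careful about is making sure that killing $I_R=R_1^2R$ is harmless at the level of the bigraded ring: one should check that $\mathsf{bgr}_{\mathfrak{N}}(R)_0$ and $\mathsf{bgr}_{\mathfrak{N}+I_R}(R/I_R)_0 = \mathsf{gr}_{\overline{\mathfrak{n}}}(\overline{R})$ differ only by a nilpotent ideal, which follows because $I_R$ is nilpotent in $R$ and hence its image in each graded piece is nilpotent — this is the step I would write out most carefully, and it is the main obstacle.
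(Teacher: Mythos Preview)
Your proposal is correct and follows essentially the same route as the paper: reduce $\mathrm{Ksdim}_0(\mathsf{bgr}_{\mathfrak{N}}(R))$ to $\mathrm{Kdim}(\mathsf{gr}_{\overline{\mathfrak{n}}}(\overline{R}))$ by peeling off nilpotent ideals, then invoke classical dimension theory. The paper carries out your ``comparison step'' via an explicit chain --- first killing the nilpotent superideal $\oplus_{l>0}B(k,l)$ to land in $C_0=\oplus_k\mathfrak{n}^k/(\mathfrak{n}^{k+1}+\mathfrak{n}^k\mathfrak{N}_1R_1)$, then recognizing both $C_0$ and $\mathsf{gr}_{\overline{\mathfrak{n}}}(\overline{R})$ as quotients of $\mathsf{gr}_{\mathfrak{n}}(R_0)$ by nilpotent ideals --- and at the end uses the coefficient field $K$ (via Cohen's theorem on $B(0,0)_0$) to exhibit $\mathsf{gr}_{\overline{\mathfrak{n}}}(\overline{R})$ as finite over $K[t_1,\dots,t_d]$, rather than citing the abstract equality $\dim\mathsf{gr}_{\mathfrak{q}}(\overline{R})=\dim\overline{R}$ as you do. Both endings are standard; your version is marginally cleaner and does not actually need the ambient assumption that $R$ contains a field.
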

\begin{proof}
Let $B$ denote $\mathsf{bgr}_{\mathfrak{N}}(R)$. Then 
\[B=\oplus_{k, l\geq 0} B(k, l), B(k, l)=\mathfrak{n}^k\mathfrak{N}^l_1R/(\mathfrak{n}^{k+1}\mathfrak{N}^l_1R+\mathfrak{n}^k\mathfrak{N}^{l+1}_1R), k, l\geq 0.\]
Observe that $B(0, 0)_0\simeq R_0/\mathfrak{n}$ is a local algebra with the nilpotent maximal ideal $\mathfrak{m}/\mathfrak{n}$, hence complete. By Cohen structure theorem
$B$ contains a \emph{coefficient field} $K\simeq R/\mathfrak{M}$ and $B(0, 0)=R/\mathfrak{N}$ is a finite dimensional $K$-superalgebra.

Since the superideal $\oplus_{k\geq 0, l> 0} B(k, l)$ is nilpotent, we have $\mathrm{Ksdim}_0(B)=\mathrm{Ksdim}_0(C)$, where $C$ is the (graded) super-subring $\oplus_{k\geq 0} B(k, 0)$. Moreover, the ring
\[C_0=\oplus_{k\geq  0}\mathfrak{n}^k/(\mathfrak{n}^{k+1}+\mathfrak{n}^k\mathfrak{N}_1R_1)\]
is a quotient of $\mathsf{gr}_{\mathfrak{n}}(R_0)$ modulo the nilpotent ideal
\[\oplus_{k\geq  0}(\mathfrak{n}^k\mathfrak{N}_1R_1 +\mathfrak{n}^{k+1})/\mathfrak{n}^{k+1} ,\]
hence $\mathrm{Ksdim}_0(B)=\mathrm{Kdim}(\mathsf{gr}_{\mathfrak{n}}(R_0))$. 

Let $\overline{\mathfrak{n}}$ denote $(\mathfrak{n}+R_1^2)/R_1^2$. The ring $\mathsf{gr}_{\overline{\mathfrak{n}}}(\overline{R})$ is a quotient 
of $\mathsf{gr}_{\mathfrak{n}}(R_0)$ modulo the nilpotent ideal 
\[\oplus_{k\geq 0}(\mathfrak{n}^k\cap R_1^2 +\mathfrak{n}^{k+1})/\mathfrak{n}^{k+1},\]
that implies $\mathrm{Ksdim}_0(B)=\mathrm{Kdim}_0(\mathsf{gr}_{\overline{\mathfrak{n}}}(\overline{R}))$.
Arguing as in Proposition 11.20 and Corollary 11.21, \cite{atmac}, one can show that $\mathsf{gr}_{\overline{\mathfrak{n}}}(\overline{R})$ is a finitely generated module over the polynomial subalgebra $K[t_1, \ldots, t_d]$,
where $t_i$ is $x_i+R_1^2 \pmod{\overline{\mathfrak{n}}^2}, 1\leq i\leq d$. Theorem 20 from \cite{mats} concludes the proof.	
\end{proof}	
Assume that $R$ is complete with respect to the $\mathfrak{M}$-adic or, equivalently, to the $\mathfrak{m}R$-adic topology.
The ring $R_0$ is also complete and therefore, it contains a coefficient field $K\simeq R/\mathfrak{M}$.
Then the map $X_i\mapsto z_i=x_i+R_1^2, 1\leq i\leq d$, is extended for a continuous monomorphism $K[[X_1, \ldots, X_d]]\to \overline{R}$ of complete local algebras. Besides, $\overline{R}$ is a finitely generated module over
$K[[z_1, \ldots, z_d]]$ (see \cite{atmac}, Chapter 11, Exercise 2). Lemma \ref{integral over nilpotents} implies that $R_0$ is a finitely generated $K[[x_1, \ldots, x_d]]$-module. Moreover, $K[[x_1, \ldots, x_d]]$ is a power series algebra,  freely generated by the elements $x_1, \ldots, x_d$ as a \emph{topological} algebra. Let $D$ denote $K[[x_1, \ldots, x_d]]$. 
\begin{lm}\label{Noether normalization for complete local}
The following statements are equivalent :
\begin{enumerate}
\item The odd elements $y_1, \ldots , y_l$ form a system of odd parameters of $R$;
\item $\mathrm{Ann}_D(y^{\underline{l}})=0$;
\item The map $Y_k\mapsto y_k, 1\leq k\leq l$, is extended for the natural isomorphism 
\[K[[X_1, \ldots, X_d\mid Y_1, \ldots, Y_l]]\simeq D[y_1, \ldots, y_l].\] 
\end{enumerate}	
\end{lm}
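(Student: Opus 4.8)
The plan is to establish the two equivalences $(1)\Leftrightarrow(2)$ and $(2)\Leftrightarrow(3)$ separately, exploiting throughout the structure recalled above: $D=K[[x_1,\ldots,x_d]]$ is a power series algebra, hence a regular local domain of Krull dimension $d=\mathrm{Kdim}(R_0)=\mathrm{Ksdim}_0(R)$, and $R_0$ is a module-finite (hence integral) extension of $D$. Recall that $y_1,\ldots,y_l\in R_1$ form a system of odd parameters of $R$ (viewed as a left $R$-supermodule) precisely when $\mathrm{Kdim}(R_0/\mathrm{Ann}_{R_0}(y^{\underline{l}}))=\mathrm{Kdim}(R_0)=d$; this is the content of $(1)$.

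For $(2)\Leftrightarrow(3)$ I would first observe that, the $y_k$ being odd, they anticommute, square to zero, and commute with $D\subseteq R_0$; hence the universal property of the polynomial superalgebra produces a superalgebra homomorphism
\[
\psi\colon K[[X_1,\ldots,X_d\mid Y_1,\ldots,Y_l]]\to R,\qquad X_i\mapsto x_i,\ Y_k\mapsto y_k,
\]
with image $D[y_1,\ldots,y_l]$. The source is a free $D$-module with basis $\{Y^{L}\mid L\subseteq\underline{l}\}$, and the elements $y^{L}$ always span $D[y_1,\ldots,y_l]$ over $D$; thus $\psi$ restricts to the asserted isomorphism precisely when the $y^{L}$ are $D$-linearly independent. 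The implication $(3)\Rightarrow(2)$ is then immediate, since $\psi(Y^{\underline{l}})=y^{\underline{l}}$ and $Y^{\underline{l}}$ is a free basis vector. For $(2)\Rightarrow(3)$, suppose $\sum_{L}c_{L}y^{L}=0$ with $c_{L}\in D$ not all zero, choose $L_{0}$ of least cardinality with $c_{L_{0}}\neq0$, and multiply the relation by $y^{\underline{l}\setminus L_{0}}$: for every $L\neq L_{0}$ with $|L|\geq|L_{0}|$ the sets $L$ and $\underline{l}\setminus L_{0}$ intersect, so $y^{L}y^{\underline{l}\setminus L_{0}}=0$, and what survives is $\pm c_{L_{0}}y^{\underline{l}}=0$, contradicting $(2)$; hence $(2)$ forces independence.

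For $(1)\Leftrightarrow(2)$ I would pass to the module $N=R_{0}y^{\underline{l}}$. Since $1\in R_{0}$, one has $\mathrm{Ann}_{R_{0}}(N)=\mathrm{Ann}_{R_{0}}(y^{\underline{l}})$ and $\mathrm{Ann}_{D}(N)=\mathrm{Ann}_{D}(y^{\underline{l}})=\mathrm{Ann}_{R_{0}}(y^{\underline{l}})\cap D$, so $R_{0}/\mathrm{Ann}_{R_{0}}(y^{\underline{l}})$ is a module-finite, hence integral, extension of $D/\mathrm{Ann}_{D}(y^{\underline{l}})$. By \cite{mats}, Theorem 20, these two rings have the same Krull dimension, so $(1)$ is equivalent to $\mathrm{Kdim}(D/\mathrm{Ann}_{D}(y^{\underline{l}}))=d=\mathrm{Kdim}(D)$. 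As $D$ is a regular local ring, it is a domain whose proper nonzero quotients have strictly smaller dimension (Krull's principal ideal theorem applied to any nonzero element of the relevant ideal), so this last equality holds if and only if $\mathrm{Ann}_{D}(y^{\underline{l}})=0$, which is $(2)$.

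I expect the routine points to be the universal-property bookkeeping for $\psi$ and the elementary combinatorics of subsets in the step $(2)\Rightarrow(3)$; the one place needing real input is the dimension identity for the module-finite extension $D\subseteq R_{0}$ together with the fact that a proper nonzero quotient of the power series domain $D$ drops dimension — this is exactly where one uses that $D$ is a domain of dimension $d$, and not merely that $\mathrm{Kdim}(R_{0})=d$. It is also worth keeping track of the parity of $y^{\underline{l}}$ (it is odd when $l$ is odd), but since every product formed in the argument multiplies $y^{\underline{l}}$ only by even elements of $D\subseteq R_{0}$, no sign subtleties intervene.
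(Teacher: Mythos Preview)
Your proof is correct and follows the natural strategy one would expect here; the paper itself simply refers to Proposition~4.3 of \cite{zubmas}, and your argument is precisely the kind of direct verification that proposition encodes (transfer the annihilator condition from $R_0$ to $D$ via the module-finite extension, then use that $D$ is a domain of dimension $d$, and handle $(2)\Leftrightarrow(3)$ by the minimal-support trick). One small wording point: the map $\psi$ is not produced by the universal property of a polynomial superalgebra over $K$ alone, since the $X_i$ side involves formal power series; rather, the identification $K[[X_1,\ldots,X_d]]\simeq D$ is already established in the text, and you then extend over $D$ by the universal property of the exterior (odd polynomial) algebra $D[Y_1,\ldots,Y_l]$. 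This is exactly what you are doing in substance, so nothing is wrong, but the phrasing could be tightened.
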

\begin{proof}
The proof can be copied from Proposition 4.3, \cite{zubmas}.	
	\end{proof}
Let $y_1, \ldots, y_s$ generate $R_1$ as a $D$-module. The proof of the following lemma is an obvious modification of the proof of Lemma 4.4, \cite{zubmas}.
\begin{lm}\label{power series analog}
There holds
\[\mathrm{Ksdim}_1(R)=\max\{|I|\mid I\subseteq\underline{s} \ \mbox{and} \ \mathrm{Ann}_D(y^I)=0 \}=\max\{l  \mid \mathrm{Ann}_D(R_1^l)=0 \}.\]
\end{lm}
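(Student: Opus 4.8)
The plan is to deduce the statement from Proposition~\ref{extension_of_prop} applied to $M=R$, from Lemma~\ref{Noether normalization for complete local}, and from the fact that $D=K[[x_1,\dots,x_d]]$ is an integral domain. First I would note that $\mathrm{Ann}_R(R)=0$, so $\mathrm{sdim}(R)=\mathrm{Ksdim}(R)$ and in particular $\mathrm{Ksdim}_1(R)=\mathrm{sdim}_1(R)$. The elements $y_1,\dots,y_s$ generate $R_1$ over $D$, hence also over $R_0$, so they form an admissible generating set for Proposition~\ref{extension_of_prop}. That proposition then says that $\mathrm{Ksdim}_1(R)$ is the largest $l$ such that $y_{i_1},\dots,y_{i_l}$ is a system of odd parameters of $R$ for some $1\le i_1<\dots<i_l\le s$ (and $\mathrm{Ksdim}_1(R)=0$ when no positive-length such family exists). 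By Lemma~\ref{Noether normalization for complete local} applied to the odd elements $y_{i_1},\dots,y_{i_l}$, this family is a system of odd parameters of $R$ precisely when $\mathrm{Ann}_D(y_{i_1}\cdots y_{i_l})=0$, i.e. $\mathrm{Ann}_D(y^I)=0$ for $I=\{i_1,\dots,i_l\}$. This yields the first equality,
\[
\mathrm{Ksdim}_1(R)=\max\{\,|I|\mid I\subseteq\underline{s},\ \mathrm{Ann}_D(y^I)=0\,\}.
\]

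For the second equality I would use the structure of $R_1^l$ as a $D$-module. Since $R_1=\sum_{i=1}^{s}Dy_i$ and the $y_i$ are odd (so $y_i^2=0$ and $y_iy_j=-y_jy_i$), expanding an $l$-fold product of elements of $R_1$ shows that $R_1^l=\sum_{I\subseteq\underline{s},\,|I|=l}Dy^I$: the terms with a repeated index vanish, and reordering introduces only signs, which are units of $D$. Consequently $\mathrm{Ann}_D(R_1^l)=\bigcap_{|I|=l}\mathrm{Ann}_D(y^I)$. If $\mathrm{Ann}_D(y^I)=0$ for some $I$ with $|I|=l$, then $\mathrm{Ann}_D(R_1^l)=0$ because $y^I\in R_1^l$. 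Conversely, if $\mathrm{Ann}_D(y^I)\neq 0$ for every such $I$, pick a nonzero $c_I\in\mathrm{Ann}_D(y^I)$; since $D$ is a domain the finite product $\prod_{|I|=l}c_I$ is a nonzero element of $\bigcap_{|I|=l}\mathrm{Ann}_D(y^I)=\mathrm{Ann}_D(R_1^l)$, so $\mathrm{Ann}_D(R_1^l)\neq 0$. Hence $\mathrm{Ann}_D(R_1^l)=0$ if and only if $\mathrm{Ann}_D(y^I)=0$ for some $I$ of size $l$, and taking the supremum over $l$ gives $\max\{\,l\mid \mathrm{Ann}_D(R_1^l)=0\,\}=\mathrm{Ksdim}_1(R)$.

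I do not expect a genuine obstacle here: the argument is essentially bookkeeping built on the cited results, which is consistent with the paper's remark that the proof is an obvious modification of Lemma~4.4 of \cite{zubmas}. The only two points needing a little care are that a $D$-generating set of $R_1$ may be used in Proposition~\ref{extension_of_prop} (legitimate because $D\subseteq R_0$), and the elementary domain argument above, which is exactly what lets one pass from the single product $y^I$ to the whole ideal power $R_1^l$.
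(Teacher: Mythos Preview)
Your argument is correct and is essentially the intended one: the paper does not give a separate proof but refers to Lemma~4.4 of \cite{zubmas}, whose polynomial-ring argument is exactly the combination of Proposition~\ref{extension_of_prop}, the equivalence of Lemma~\ref{Noether normalization for complete local}, and the domain trick you use to pass from individual $y^I$ to $R_1^l$. The only cosmetic point is that the decomposition $R_1^l=\sum_{|I|=l}Dy^I$ relies on the elements of $D$ being even (hence central), which you implicitly use when pulling the coefficients past the $y_i$'s; this is of course immediate since $D\subseteq R_0$.
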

\begin{pr}\label{upper bound for odd}
Under the conditions of Lemma \ref{even dim for bigraded} we have \[\mathrm{Ksdim}_1(\mathsf{bgr}_{\mathfrak{N}}(R))\leq\mathrm{Ksdim}_1(R).\]
\end{pr}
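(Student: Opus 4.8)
The plan is to reduce to the complete local case and then compare $R$ with $B:=\mathsf{bgr}_{\mathfrak{N}}(R)$ through their respective Noether normalizations, by a leading-form argument in the spirit of Lemma \ref{weaker_inequality_II}, adapted to the fact that (in contrast with $\mathsf{gr}$) the prime spectrum of $\mathsf{bgr}_{\mathfrak{N}}(R)$ is not identified with that of $R$. First I would pass to the $\mathfrak{M}$-adic completion $\widehat{R}$: since $R_1$ is finitely generated, Lemma \ref{completion as a functor} gives $\widehat{R}_1^{\,l}=R_1^l\widehat{R}_0$, and flatness of completion together with $\mathrm{Kdim}(\widehat{R}_0)=\mathrm{Kdim}(R_0)$ yields $\mathrm{Ksdim}_1(\widehat{R})=\mathrm{Ksdim}_1(R)$. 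Each component $B(k,l)$ is killed by $\mathfrak{N}$ and finitely generated over the Artinian ring $B(0,0)=R/\mathfrak{N}$, hence of finite length, so $B(k,l)\otimes_R\widehat{R}=B(k,l)$ and $\mathsf{bgr}_{\widehat{\mathfrak{N}}}(\widehat{R})\cong B$; the hypothesis of Lemma \ref{even dim for bigraded} passes to $\widehat{R}$ as well. So one may assume $R$ complete local.

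Then $x_1,\dots,x_d$ is a system of parameters of $R_0$, so $D:=K[[x_1,\dots,x_d]]\subseteq R_0$ is a power series ring over which $R_0$ is module-finite, and by Lemma \ref{Noether normalization for complete local}, for odd $y_j$ $(j\in T)$ the family $\{y_j\}_{j\in T}$ is a system of odd parameters of $R$ if and only if $\mathrm{Ann}_D\!\big(\prod_{j\in T}y_j\big)=0$. Arguing as in the proof of Lemma \ref{even dim for bigraded} and using Lemma \ref{integral over nilpotents}, $B_0$ is module-finite over the polynomial subring $P:=K[t_1,\dots,t_d]$, with $t_i$ the image of $x_i$ in $B(1,0)_0$; since $\mathrm{Kdim}(B_0)=d=\mathrm{Kdim}(P)$ (Lemma \ref{even dim for bigraded}) and $P$ is a domain, $P\hookrightarrow B_0$ and $\mathrm{Kdim}(B_0/J)=d$ iff $J\cap P=0$ for every ideal $J\subseteq B_0$. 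Fix generators $y_1,\dots,y_s$ of the $R_0$-module $R_1$ with $y_1,\dots,y_{s'}$ generating $\mathfrak{N}_1$; then $\bar y_j$ denotes the class of $y_j$ in $B(0,1)_1$ for $j\le s'$ and in $B(0,0)_1$ for $j>s'$, and a short check (using $B(k,l)_1\subseteq B_0\cdot B(0,l)_1$ and $B(0,l)_1\subseteq\sum_j B_0\bar y_j$) shows that $\bar y_1,\dots,\bar y_s$ generate $B_1$ as a $B_0$-module.

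We may assume $l:=\mathrm{Ksdim}_1(B)\ge1$. By Proposition \ref{extension_of_prop} applied to $B$ over itself there is $T\subseteq\{1,\dots,s\}$ with $|T|=l$ such that $\{\bar y_j\}_{j\in T}$ is a system of odd parameters of $B$, i.e.\ (using $\mathrm{Ann}_{B_0}(\bar y^{T}B)=\mathrm{Ann}_{B_0}(\bar y^{T})$) $\mathrm{Kdim}\big(B_0/\mathrm{Ann}_{B_0}(\bar y^{T})\big)=d$, that is $\mathrm{Ann}_{B_0}(\bar y^{T})\cap P=0$, where $\bar y^{T}=\prod_{j\in T}\bar y_j$. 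Put $l_T=|\{j\in T:j\le s'\}|$, so that $\bar y^{T}$ is the class in $B(0,l_T)$ of $y^{T}=\prod_{j\in T}y_j\in\mathfrak{N}_1^{\,l_T}R$. It suffices to prove that $\{y_j\}_{j\in T}$ is a system of odd parameters of $R$, since this forces $\mathrm{Ksdim}_1(R)\ge l$. If it is not, there is $0\ne\xi\in D$ with $\xi\,y^{T}=0$ in $R$; write $\xi=\xi_m+\xi'$ with $\xi_m\ne0$ its form of least degree $m$ in $x_1,\dots,x_d$ and $\xi'\in(x_1,\dots,x_d)^{m+1}R_0\subseteq\mathfrak{n}^{m+1}$. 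Then $\xi_m y^{T}=-\xi'y^{T}\in\mathfrak{n}^{m+1}\mathfrak{N}_1^{\,l_T}R$, so the class of $\xi_m y^{T}$ vanishes in $B(m,l_T)=\mathfrak{n}^m\mathfrak{N}_1^{\,l_T}R/(\mathfrak{n}^{m+1}\mathfrak{N}_1^{\,l_T}R+\mathfrak{n}^m\mathfrak{N}_1^{\,l_T+1}R)$; but that class equals $\xi_m\cdot\bar y^{T}$, where $\xi_m$ also denotes its nonzero image in $P\subseteq B_0$. Hence $0\ne\xi_m\in\mathrm{Ann}_{B_0}(\bar y^{T})\cap P$, a contradiction.

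The difficulty lies in the set-up rather than in the final computation: one must pass to completions so that the power series normalization of this section becomes available on the side of $R$, and then match it against the polynomial normalization of $B_0$ coming from Lemma \ref{even dim for bigraded}; working in the complete ring is exactly what makes the tail $\xi'$ genuinely belong to $\mathfrak{n}^{m+1}$, so that the leading form $\xi_m$ descends to a nonzero element of $B_0$. The one-sidedness of the bound is then transparent: $\bar y^{T}$ records only the $\mathfrak{N}_1$-adic order of $y^{T}$, and since $\mathfrak{N}_1$ may be a proper subset of $R_1$, a longest odd system of $R$ need not remain longest in $B$, which is why equality is recovered precisely when $\mathfrak{N}_1=R_1$. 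The only other point requiring care is the (routine) verification that the images of an $R_0$-generating set of $R_1$ generate $B_1$ over $B_0$ and occupy the bidegrees claimed above.
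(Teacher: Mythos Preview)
Your proof is correct and follows essentially the same route as the paper: reduce to the complete case (the paper cites Theorem 5.12 of \cite{zubmas} and Lemma \ref{completion as a functor} for this, while you spell it out via flatness and the fact that each $B(k,l)$ is an $R/\mathfrak{N}$-module of finite length), set up the power-series normalization $D=K[[x_1,\dots,x_d]]\subseteq R_0$ against the polynomial normalization $\overline{D}=K[\overline{x_1},\dots,\overline{x_d}]\subseteq B_0$, choose a system of odd parameters of $B$ among a fixed bihomogeneous generating set, and then run the leading-form argument to lift it to $R$. The only cosmetic difference is that the paper takes bihomogeneous generators $u_1,\dots,u_t$ of $B$ as a $\overline{D}$-supermodule and invokes the analogue of Lemma \ref{power series analog} directly over $\overline{D}$, whereas you take generators $\bar y_1,\dots,\bar y_s$ of $B_1$ as a $B_0$-module lifted from generators of $R_1$ and phrase the condition as $\mathrm{Ann}_{B_0}(\bar y^{T})\cap P=0$; both lead to the identical contradiction $0\neq\overline{\xi_m}\in\mathrm{Ann}_{\overline{D}}(\bar y^{T})$.
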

\begin
{proof}
Let $\widehat{R}$ denote the completion of $R$ in the $\mathfrak{N}$-adic topology. Since this topology coincides with the $\mathfrak{M}$-adic 
topology, Theorem 5.12 from \cite{zubmas} implies $\mathrm{Ksdim}(R)=\mathrm{Ksdim}(\widehat{R})$. Moreover, Lemma \ref{completion as a functor} implies that $\mathsf{bgr}_{\mathfrak{N}}(R)$ is naturally isomorphic to $\mathsf{bgr}_{\widehat{\mathfrak{N}}}(\widehat{R})$. Therefore, without loss of a generality one can assume that $R$ is complete. 

Using Lemma \ref{integral over nilpotents} and the elementary observation that some elements are algebraically independent (over the ground field $K$)  whenever they are algebraically independent modulo an ideal, we obtain that the elements   
$\overline{x_j}= x_j \pmod{\mathfrak{N}_{1, 0}\mathfrak{N}}$ freely generate a polynomial subalgebra $K[\overline{x_1}, \ldots, \overline{x_d}]$ of $B$, and $B$ is a finitely generated $K[\overline{x_1}, \ldots, \overline{x_d}]$-supermodule. 

Let $\overline{D}$ denote $K[\overline{x_1}, \ldots, \overline{x_d}]$. Since $\overline{D}$ is a graded subalgebra of $B$, there is a generating set of $\overline{D}$-supermodule $B$ consisting of homogeneous elements, say $u_1\in B(k_1, l_1), \ldots , u_t\in B(k_t, l_t)$.
Choose also their representatives $v_1\in \mathfrak{N}_{k_1, l_1}, \ldots, v_t\in \mathfrak{N}_{k_t, l_t}$.

If $\mathrm{Ksdim}_1(B)=l$, then there are odd elements $u_{i_1}, \ldots, u_{i_l}$ such that 
\[\mathrm{Ann}_{\overline{D}}(u_{i_1} \ldots u_{i_l})=0.\]
It remains to show that the elements $v_{i_1}, \ldots, v_{i_l}$ form a system of odd parameters in $R$. If they do not, then
by Lemma \ref{Noether normalization for complete local} (2) there is a nonzero power series $f\in D$ such that $f v_{i_1}\ldots v_{i_l}=0$. 
Thus, if $g$ is the first nonzero homogeneous component of $f$, say of degree $m$, then $\overline{g}=g\pmod{\mathfrak{N}_{m+1, 0}\mathfrak{N}} \neq 0$ and  $\overline{g}u_{i_1}\ldots u_{i_l}=0,$
a contradiction. Proposition is proven.   
\end{proof}
Later we will show that the above ineaqulity is strict in general. But in some particular cases it turnes to be equality.

Assume that $\mathfrak{N}$ is a $\mathfrak{M}$-primary superideal from Proposition 
\ref{upper bound for odd} such that $\mathfrak{N}_1=R_1$. We call such ($\mathfrak{M}$-primary) superideal \emph{special}. 

Let $A$ be either a polynomial $K$-algebra or a power series $K$-algebra, freely generated by the elements $v_1, \ldots, v_d$.
Set $\mathfrak{v}=Av_1+\ldots +Av_d$. 
\begin{lm}\label{growth function}
If $M$ is a finitely generated $A$-module, then the function 
\[g_M : n\mapsto \dim M/\mathfrak{v}^{n+1}M\]
is a polynomial of degree at most $d$ for all sufficiently large $n$.
\end{lm}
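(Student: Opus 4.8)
The plan is to reduce this to the classical Hilbert--Samuel polynomial theorem for finitely generated modules over a polynomial or power series ring in $d$ variables. Since $A$ is either $K[v_1,\ldots,v_d]$ or $K[[v_1,\ldots,v_d]]$ and $\mathfrak v=Av_1+\cdots+Av_d$, the quotients $A/\mathfrak v^{n+1}$ are finite-dimensional $K$-vector spaces (in the power series case because $\mathfrak v$ is the maximal ideal and $A$ is Noetherian local with residue field $K$), and more generally $M/\mathfrak v^{n+1}M$ is a finite-dimensional $K$-vector space for any finitely generated $M$. So $g_M(n)=\dim_K M/\mathfrak v^{n+1}M$ is well defined.

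First I would recall the standard fact (e.g. Atiyah--Macdonald, Chapter 11, or Matsumura) that for a finitely generated graded module $N=\bigoplus_{n\ge 0}N(n)$ over the standard-graded polynomial algebra $K[v_1,\ldots,v_d]$, the Hilbert function $n\mapsto \dim_K N(n)$ agrees with a polynomial of degree at most $d-1$ for large $n$, and hence the "summed" function $n\mapsto\sum_{i\le n}\dim_K N(i)$ agrees with a polynomial of degree at most $d$ for large $n$. Then I would apply this to the associated graded module $N=\mathsf{gr}_{\mathfrak v}(M)=\bigoplus_{n\ge 0}\mathfrak v^nM/\mathfrak v^{n+1}M$, which is finitely generated over $\mathsf{gr}_{\mathfrak v}(A)$; the latter is a quotient of (in the power series case, isomorphic to) the standard-graded polynomial ring $K[v_1,\ldots,v_d]$. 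Since
\[
\dim_K M/\mathfrak v^{n+1}M=\sum_{i=0}^{n}\dim_K \mathfrak v^iM/\mathfrak v^{i+1}M,
\]
the summed-Hilbert-function result gives that $g_M(n)$ is eventually polynomial of degree $\le d$.

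In the power series case one must be slightly careful that $M$ is $\mathfrak v$-adically separated and that $\bigcap_n\mathfrak v^nM=0$, but this is automatic: $A$ is Noetherian local with maximal ideal $\mathfrak v$, $M$ is finitely generated, so Krull's intersection theorem applies, and the graded pieces $\mathfrak v^iM/\mathfrak v^{i+1}M$ are genuinely the ones controlling the finite length $\dim_K M/\mathfrak v^{n+1}M$. In the polynomial case $\mathfrak v$ is not maximal, but $A/\mathfrak v=K$ still, $\mathfrak v$ is generated by $d$ elements, $A/\mathfrak v^{n+1}$ has finite $K$-dimension because it is spanned by the (finitely many) monomials in $v_1,\ldots,v_d$ of degree $\le n$, and the same argument via $\mathsf{gr}_{\mathfrak v}(A)\cong K[v_1,\ldots,v_d]$ goes through verbatim. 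The main (and essentially only) obstacle is bookkeeping: confirming that the associated graded ring is finitely generated in degree one over $K$ with $d$ generators so that the degree bound is exactly $d-1$ on graded pieces, hence $d$ after summation; everything else is a direct invocation of the classical theory, and the super-structure plays no role here since $M$ is treated purely as an $A$-module.
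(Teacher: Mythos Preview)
Your argument is correct. The only difference from the paper is in packaging: the paper first reduces the polynomial case to the power series case by passing to the $\mathfrak{v}$-adic completion (using that $\widehat{M}/\widehat{\mathfrak{v}}^{\,n+1}\widehat{M}\simeq M/\mathfrak{v}^{n+1}M$) and then simply cites the classical Hilbert--Samuel result for a Noetherian local ring, whereas you treat both cases at once by passing to the associated graded and invoking the Hilbert polynomial for finitely generated graded modules over $K[v_1,\ldots,v_d]$. Your route is essentially an unpacking of the references the paper cites; it has the minor advantage of avoiding the completion step, while the paper's route has the minor advantage of landing directly in the local setting where the statement is most familiar. One small remark: the separatedness $\bigcap_n\mathfrak{v}^nM=0$ you mention is not actually needed for the conclusion, since the summation formula $\dim_K M/\mathfrak{v}^{n+1}M=\sum_{i\le n}\dim_K \mathfrak{v}^iM/\mathfrak{v}^{i+1}M$ holds regardless.
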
 
\begin{proof}
If $A$ is a polynomial algebra, then its completion in $\mathfrak{v}$-adic topology is isomorphic to $K[[v_1, \ldots, v_d]]$.
Moreover, $\widehat{\mathfrak{v}}=\widehat{A}\mathfrak{v}=\widehat{A}v_1+\ldots \widehat{A}v_d$ and Lemma \ref{completion as a functor} implies
that $\widehat{M}$ is a finitely generated $\widehat{A}$-module. Moreover, we have  
\[\widehat{M}/\widehat{\mathfrak{v}}^{n+1}\widehat{M}\simeq  M/\mathfrak{v}^{n+1}M\otimes_A\widehat{A}\simeq  M/\mathfrak{v}^{n+1}M\]
for any $n\geq 0$. Therefore, without loss of a generality one can assume that $A$ is a power series algebra. Then Corollary 11.5 from \cite{atmac}, or \cite{mats}, (12.C), conclude the proof.
\end{proof}	
\begin{lm}\label{a growth criterion}
$\mathrm{Ann}_A(M)=0$ if and only if the degree of $g_M$ is exactly $d$.	
\end{lm}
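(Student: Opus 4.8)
The plan is to reduce to the case where $A$ is a power series algebra and then invoke the classical identification of the degree of the Hilbert--Samuel polynomial of a module with its dimension. The only genuine input is that classical fact; the part requiring some care is checking that passage to the completion disturbs neither $g_M$ nor the vanishing of $\mathrm{Ann}_A(M)$.

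First I would treat the polynomial case by completion, exactly as in the proof of Lemma~\ref{growth function}. If $A=K[v_1,\ldots,v_d]$, put $\widehat A=K[[v_1,\ldots,v_d]]$ and $\widehat M=M\otimes_A\widehat A$; the computation in Lemma~\ref{growth function} gives $g_M=g_{\widehat M}$. Writing $M=Am_1+\cdots+Am_r$, we have $\mathrm{Ann}_A(M)=\bigcap_i\mathrm{Ann}_A(m_i)$, where each $\mathrm{Ann}_A(m_i)$ is the kernel of the multiplication map $A\to M$, $a\mapsto am_i$. Since $\widehat A$ is flat over $A$ by Lemma~\ref{completion as a functor}, and since flatness commutes with kernels and with finite intersections of submodules, while $\widehat A\otimes_A M\simeq\widehat M$, we obtain $\widehat A\otimes_A\mathrm{Ann}_A(M)=\mathrm{Ann}_{\widehat A}(\widehat M)$ inside $\widehat A$. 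As $\bigcap_n\mathfrak v^nA=0$ in the polynomial ring, $A$ embeds in $\widehat A$, whence $\mathrm{Ann}_A(M)\hookrightarrow\mathrm{Ann}_{\widehat A}(\widehat M)$; therefore $\mathrm{Ann}_A(M)=0$ if and only if $\mathrm{Ann}_{\widehat A}(\widehat M)=0$, and we may assume $A=K[[v_1,\ldots,v_d]]$.

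Now $A$ is a regular local ring of dimension $d$ with maximal ideal $\mathfrak v$, and $g_M$ agrees for $n\gg0$ with the Hilbert--Samuel function $n\mapsto\dim_K M/\mathfrak v^{n+1}M$ of $M$ with respect to $\mathfrak v$; by the dimension theorem for Noetherian local rings (\cite{atmac}, Theorem~11.14, or \cite{mats}), the degree of $g_M$ equals $\dim(A/\mathrm{Ann}_A(M))$. It remains to check that $\dim(A/\mathrm{Ann}_A(M))=d$ if and only if $\mathrm{Ann}_A(M)=0$. One implication is trivial, since $\dim A=d$. Conversely, if $\mathrm{Ann}_A(M)$ contains a nonzero element $a$, then, $A$ being a domain, every chain of primes $\mathfrak q_0\subsetneq\cdots\subsetneq\mathfrak q_k$ with $a\in\mathfrak q_0$ extends to the strictly longer chain $0\subsetneq\mathfrak q_0\subsetneq\cdots\subsetneq\mathfrak q_k$, so $k+1\le\dim A=d$; hence $\dim(A/(a))\le d-1$ and, a fortiori, $\dim(A/\mathrm{Ann}_A(M))\le d-1<d$. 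Together with Lemma~\ref{growth function}, which already ensures $\deg g_M\le d$, this completes the proof.
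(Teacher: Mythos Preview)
Your proof is correct. It differs from the paper's in two respects worth noting. First, you handle the reduction to the power-series case more carefully: the paper simply asserts ``one can assume that $A$ is a power series algebra'' (relying on the previous lemma for $g_M$), whereas you explicitly verify via flatness that $\mathrm{Ann}_A(M)=0$ is equivalent to $\mathrm{Ann}_{\widehat A}(\widehat M)=0$. Second, once in the local case, you invoke directly the module form of the dimension theorem, namely $\deg g_M=\dim(A/\mathrm{Ann}_A(M))$, and then use that $A$ is a domain to conclude. The paper instead argues more by hand: it filters $M$ by cyclic submodules, bounds the degree of each $g_{Am}$ using $\dim(A/\mathrm{Ann}_A(m))\le d-1$, and combines these via the additivity statement (Matsumura, (12.D)); for the converse it bounds $\deg g_N$ for cyclic submodules $N$ and multiplies nonzero annihilators in the domain $A$. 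Your route is shorter and cleaner, at the cost of citing a slightly stronger ready-made fact (note that \cite{atmac}, Theorem~11.14 is stated for rings; the module version you need is in \cite{mats}). The paper's route is more self-contained but essentially reproves that same fact in the special situation at hand.
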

\begin{proof}
As it has been already observed, one can assume that $A$ is a power series algebra.
If $\mathrm{Ann}_A(M)\neq 0$, then $I_m=\mathrm{Ann}_A(m)\neq 0$ for any $m\in M$. Moreover, $\mathrm{Kdim}(A/I_m)=t\leq  d-1$ and  
there is a $\mathfrak{v}/I_m$-primary ideal $\mathfrak{t}$, generated by $t$ elements.
Thus the degree of $g_{Am}$ $\leq d-1$ (see Corollary 11.5 and Proposition 11.6, \cite{atmac}, or \cite{mats}, (12.C)).
Note that $M$ has a finite filtration whose quotients are monic $A$-modules. The induction on the length of this filtration and Proposition (12.D), \cite{mats},  imply that the degree of $g_M$ $\leq d-1$, hence the part "if".

Conversely, if the degree of $g_M$ $\leq d-1$, then by Proposition (12.D), \cite{mats}, the degree of $g_N$ $\leq d-1$ for each monic $A$-submodule $N$. Thus $\mathrm{Ann}_A(N)\neq 0$ and since $M$ is finitely generated, 
$\mathrm{Ann}_A(M)\neq 0$. Lemma is proven.
\end{proof}
\begin{theorem}\label{toward Hilbert polynomial}
If $\mathfrak{N}$ is a special $\mathfrak{M}$-primary superideal, then $\mathrm{Ksdim}(\mathsf{bgr}_{\mathfrak{N}}(R))=\mathrm{Ksdim}(R)$.
\end{theorem}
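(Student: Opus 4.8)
The plan is to combine Proposition~\ref{upper bound for odd} (which already gives the inequality $\mathrm{Ksdim}_1(\mathsf{bgr}_{\mathfrak{N}}(R))\leq\mathrm{Ksdim}_1(R)$) with the reverse inequality, obtaining equality of the odd parts; the equality of the even parts is Lemma~\ref{even dim for bigraded}. So the whole theorem reduces to proving $\mathrm{Ksdim}_1(\mathsf{bgr}_{\mathfrak{N}}(R))\geq\mathrm{Ksdim}_1(R)$ when $\mathfrak{N}$ is special, i.e.\ $\mathfrak{N}_1=R_1$. As in the proof of Proposition~\ref{upper bound for odd}, by Theorem~5.12 of \cite{zubmas} together with Lemma~\ref{completion as a functor} one may replace $R$ by its $\mathfrak{N}$-adic (equivalently $\mathfrak{M}$-adic) completion, so I would assume from the start that $R$ is complete. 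Then $R_0$ contains a coefficient field $K\simeq R/\mathfrak{M}$, $R_0$ is a finitely generated module over the power series subalgebra $D=K[[x_1,\ldots,x_d]]$ (freely generated topologically by the parameters $x_i$), and, crucially, since $\mathfrak{N}_1=R_1$, the bigraded super-ring $B=\mathsf{bgr}_{\mathfrak{N}}(R)$ has $B(k,l)=\mathfrak{n}^kR_1^{l}R/(\mathfrak{n}^{k+1}R_1^lR+\mathfrak{n}^kR_1^{l+1}R)$, so that its odd-degree structure genuinely sees all of $R_1$, not just a proper quotient.

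The key point I would establish is that a longest system of odd parameters $y_1,\ldots,y_l$ of $R$ (with $l=\mathrm{Ksdim}_1(R)$) can be lifted, after suitable choices, to odd homogeneous elements of $B$ whose product has zero annihilator over $\overline{D}=K[\overline{x_1},\ldots,\overline{x_d}]$, the graded polynomial subalgebra of $B$ over which $B$ is finite (this finiteness was proved inside Proposition~\ref{upper bound for odd}). By Lemma~\ref{power series analog}, $\mathrm{Ann}_D(y_1\cdots y_l)=0$; since $\mathfrak{N}_1=R_1$, each $y_i$ survives to a nonzero element $u_i\in B(0,1)$ (its image in $R_1/\mathfrak{n}R_1 = R_1/(\mathfrak{n}R_1+R_1^3)$). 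The heart of the argument is to show $\mathrm{Ann}_{\overline{D}}(u_1\cdots u_l)=0$: if some nonzero homogeneous $g\in K[\overline{x_1},\ldots,\overline{x_d}]$ killed $u_1\cdots u_l$ in $B$, one lifts $g$ to a polynomial in the $x_i$'s and, reading degrees in the $\mathfrak{N}_{1,0}=\mathfrak{n}$-adic filtration, produces a nonzero power series in $D$ annihilating $y_1\cdots y_l$ in $R$ — contradicting $\mathrm{Ann}_D(y_1\cdots y_l)=0$. This is essentially the contrapositive of the degradation argument used at the end of Proposition~\ref{upper bound for odd}, run in the opposite direction, and it is precisely here that the hypothesis $\mathfrak{N}_1=R_1$ is indispensable: without it, $u_1\cdots u_l$ need not be nonzero in $B$ at all. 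Once $\mathrm{Ann}_{\overline{D}}(u_1\cdots u_l)=0$ is known, Lemma~\ref{power series analog} applied to $B$ over $\overline{D}$ (or its completion) gives $\mathrm{Ksdim}_1(B)\geq l$, completing the reverse inequality.

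Alternatively — and this may be the cleaner route to write up — one can use the Hilbert polynomial machinery that the introduction and the subsequent part of Section~6 develop: Lemma~\ref{growth function} and Lemma~\ref{a growth criterion} say that for a finitely generated module over a power series algebra freely generated by $d$ elements, the annihilator is zero if and only if the growth function has degree exactly $d$. I would apply this to $B$ as a graded $\overline{D}$-module and to $R_0$, $R_1^l$ as $D$-modules: $\mathrm{Ksdim}_1(R)\geq l$ means $\mathrm{Ann}_D(R_1^l)=0$, i.e.\ $g_{R_1^l}$ has degree $d$; and one must show the corresponding $l$-th bigraded slice $\oplus_k B(k,l)$ of $B$, which equals $\oplus_k \mathfrak{n}^kR_1^l/(\mathfrak{n}^{k+1}R_1^l+\mathfrak{n}^kR_1^{l+1})=\mathsf{gr}_{\overline{\mathfrak{n}}}$ of $R_1^l/R_1^{l+1}$ up to nilpotents, also has growth of degree $d$ over $\overline{D}$. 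This follows because passing from a finitely generated $D$-module $N=R_1^l$ (with $R_1^{l+1}$ a submodule) to the associated graded module $\mathsf{gr}_{\mathfrak{n}}(N)/\mathsf{gr}_{\mathfrak{n}}(R_1^{l+1})$ preserves the dimension (length/Hilbert-polynomial degree is invariant under taking associated graded), exactly the even-side argument of Lemma~\ref{even dim for bigraded} applied with $R_1^l$ in place of $R_0$.

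I expect the main obstacle to be bookkeeping the bigraded degrees carefully: one has to be certain that lifting $\overline{D}$-relations in $B$ back to $D$-relations in $R$ respects the two separate filtration indices ($\mathfrak{n}$-powers and $R_1$-powers) and that the specialness hypothesis $\mathfrak{N}_1=R_1$ is used exactly where the odd filtration is concerned, while the even Noether-normalization statement ($B$ finite over $\overline{D}$) is inherited verbatim from Proposition~\ref{upper bound for odd} and Lemma~\ref{even dim for bigraded}. There is also a minor subtlety in the non-complete case — but the reduction to the complete case via Theorem~5.12 of \cite{zubmas} and Lemma~\ref{completion as a functor}, already performed in Proposition~\ref{upper bound for odd}, dispatches it. So the theorem is: $\mathrm{Ksdim}_0$ equality from Lemma~\ref{even dim for bigraded}, $\mathrm{Ksdim}_1\leq$ from Proposition~\ref{upper bound for odd}, and $\mathrm{Ksdim}_1\geq$ from the lifting argument above, which is the only genuinely new content.
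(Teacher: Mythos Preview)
Your approach (a) has a genuine gap. What you call ``the contrapositive of the degradation argument'' from Proposition~\ref{upper bound for odd} is in fact its \emph{converse}, and that converse fails. Concretely: if $\overline{g}\in\overline{D}$ is homogeneous of degree $m$ and $\overline{g}\,u_1\cdots u_l=0$ in $B(m,l)$, this only tells you that $g\,y_1\cdots y_l\in\mathfrak{n}^{k+1}R_1^l+\mathfrak{n}^kR_1^{l+2}$ (note: $R_1^{l+2}$, not $R_1^{l+1}$, by parity). The term coming from $\mathfrak{n}^kR_1^{l+2}$ is the obstruction --- it is not of the form (element of $D$)$\cdot y_1\cdots y_l$, so no iteration produces a nonzero $f\in D$ with $f\,y_1\cdots y_l=0$. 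The specialness hypothesis $\mathfrak{N}_1=R_1$ guarantees the $u_i$ are defined, but does nothing to control these higher-$R_1$-degree error terms.

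Your approach (b) is the right one and is essentially what the paper does, but you are missing the key intermediary that makes it work. The paper does \emph{not} compare $B$ directly with $R$; instead it passes first to $C=\mathsf{gr}_{I_R}(R)$, invoking Theorem~\ref{grading_and_dimension_II} to get $\mathrm{Ksdim}_1(R)=\mathrm{Ksdim}_1(C)$, and then forms $T=\mathsf{gr}_{\overline{\mathfrak{n}}}(C)$. Two facts then finish the proof: (i) there is a canonical bigraded \emph{epimorphism} $B\twoheadrightarrow T$ (because $\mathfrak{n}^kR_1^{l+2}\subseteq\mathfrak{n}^kR_1^l\cap R_1^{l+2}$), giving $\mathrm{Ksdim}_1(B)\geq\mathrm{Ksdim}_1(T)$ via Lemma~4.4 of \cite{zubmas}; and (ii) the growth functions satisfy $g_{C(l)}(n)=\sum_{0\leq k\leq n}\dim T(k,l)=g_{T(l)}(n)$ exactly, so $\mathrm{Ksdim}_1(T)=\mathrm{Ksdim}_1(C)$ by Lemma~\ref{a growth criterion}. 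The chain $\mathrm{Ksdim}_1(R)=\mathrm{Ksdim}_1(C)=\mathrm{Ksdim}_1(T)\leq\mathrm{Ksdim}_1(B)\leq\mathrm{Ksdim}_1(R)$ closes. Your phrase ``up to nilpotents'' and your formula $\mathsf{gr}_{\mathfrak{n}}(N)/\mathsf{gr}_{\mathfrak{n}}(R_1^{l+1})$ gloss over exactly the step where the $R_1^{l+2}$ error terms are handled --- and that is precisely what the passage through $C$ (i.e.\ Theorem~\ref{grading_and_dimension_II}) accomplishes.
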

\begin{proof}
For arbitrary nonnegative integers $k, l$ we have
\[B(k, l)=\frac{\mathfrak{n}^kR_1^l + \mathfrak{n}^kR_1^{l+1}}{\mathfrak{n}^{k+1} R_1^l +\mathfrak{n}^k R_1^{l+2}+\mathfrak{n}^kR_1^{l+1}}\simeq\frac{\mathfrak{n}^kR_1^l}{\mathfrak{n}^{k+1} R_1^l +\mathfrak{n}^k R_1^{l+2}}.\]
As above, one can assume that $R$ is complete in the $\mathfrak{n}$-adic topology. Then $C=\mathsf{gr}_{I_R}(R)$ is complete in the 
$\overline{\mathfrak{n}}$-adic topology, where $\overline{\mathfrak{n}}=(\mathfrak{n}+R_1^2)/R_1^2$ is generated by the even parameters
$z_i=x_i +R_1^2, 1\leq i\leq d$. Besides, these elements freely generate a power series subalgebra $K[[z_1, \ldots , z_d]]$ of $C(0)=\overline{R}$ also, and $C$ is a finitely generated $K[[z_1, \ldots , z_d]]$-supermodule. Since $C(l)=C(1)^l$ for any positive integer $l$, Lemma \ref{Noether normalization for complete local}  infers $\mathrm{Ksdim}_1(C)=\max\{l\mid \mathrm{Ann}_D(C(l))=0\}$.

Consider a bigraded superalgebra $T=\mathsf{gr}_{\overline{\mathfrak{n}}}(C)$, whose (bigraded) components are 
\[T(k, l)=\overline{\mathfrak{n}}^k C(l)/\overline{\mathfrak{n}}^{k+1}C(l)\simeq \frac{\mathfrak{n}^k R_1^l +R_1^{l+2}}{\mathfrak{n}^{k+1} R_1^l +R_1^{l+2}}\simeq \frac{\mathfrak{n}^k R_1^l}{\mathfrak{n}^{k+1} R_1^l +(\mathfrak{n}^k R_1^l\cap R_1^{l+2})}.\]
We have $\mathfrak{n}^k R_1^{l+2}\subseteq \mathfrak{n}^k R_1^l\cap R_1^{l+2}$ for any $k, l\geq 0$, thus a canonical epimorphism $B\to T$ of bigraded superalgebras. As it has been already observed, the map $\overline{x_i}\mapsto t_i$ induces an isomorphism $\overline{D}\to K[t_1, \ldots, t_d]$ of polynomial rings. Since both $B$ and $T$
are finitely generated supermodules over $\overline{D}$ and $K[t_1, \ldots, t_d]$ respectiely, Lemma 4.4, \cite{zubmas}, implies
$\mathrm{Ksdim}_1(B)\geq\mathrm{Ksdim}_1(T)$. Therefore, if we will prove that $\mathrm{Ksdim}_1(T)=\mathrm{Ksdim}_1(C)$, then Proposition \ref{upper bound for odd} and Theorem  
\ref{grading_and_dimension_II} imply the statement.

Combining Lemma \ref{power series analog} with Lemma \ref{a growth criterion}, one sees that 
\[\mathrm{Ksdim}_1(C)=\max\{l \mid g_{C(l)} \ \mbox{has the degree} \ d\}, \] where $C$ is regarded as a $K[[z_1, \ldots, z_d]]$-supermodule. Recall that $T$ is a graded superalgebra with respect to the grading $T(l)=\oplus_{k\geq 0} T(k, l), l\geq 0$. Moreover, if $\mathfrak{v}$ 
denotes the ideal of $K[t_1, \ldots, t_d]$ generated by $t_1, \ldots , t_d$, then \[\mathfrak{v}^k T(l)=\oplus_{s\geq k} T(s, l) \ \mbox{and} \  \mathfrak{v}^k T(l)/\mathfrak{v}^{k+1}T(l)\simeq T(k, l), k, l\geq 0.\]
Furthermore, for any $l\geq 1$ there is $T(l)=T(1)^l$, and as above we have 
\[\mathrm{Ksdim}_1(T)=\max\{l \mid g_{T(l)} \ \mbox{has the degree} \ d .\} \] 
It remains to note that 
\[g_{C(l)}(n)=\sum_{0\leq k\leq n}\dim T(k, l)=g_{T(l)}(n)\]
for any nonnegative integer $n$, hence our theorem follows.	 
\end{proof} 
Let $\mathfrak{w}$ denote the ideal of $\overline{D}$ generated by the elements $\overline{x_1}, \ldots, \overline{x_d}$. Set $B(l)=\oplus_{k\geq 0} B(k, l), l\geq 0$. Then we have
\[\mathfrak{v}^k B(l)=\oplus_{s\geq k} B(s, l) \ \mbox{and} \  \mathfrak{v}^k B(l)/\mathfrak{v}^{k+1}B(l)\simeq B(k, l), k, l\geq 0,\]
similarly to the above. 
\begin{cor}\label{Hilbert polynomial}
We have $\mathrm{Ksdim}_1(R)=\max\{l\mid g_{B(l)} \ \mbox{has the degree} \ d\}$.
\end{cor}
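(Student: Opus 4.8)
The plan is to derive the corollary from Theorem~\ref{toward Hilbert polynomial} and the growth criteria of Lemmas~\ref{growth function} and~\ref{a growth criterion}, keeping the Noether normalization $\overline{D}=K[\overline{x_1},\ldots,\overline{x_d}]$ of $B=\mathsf{bgr}_{\mathfrak N}(R)$ constructed in the proof of Proposition~\ref{upper bound for odd}; here $\mathfrak N$ is the special $\mathfrak M$-primary superideal of Theorem~\ref{toward Hilbert polynomial} and $d=\mathrm{Ksdim}_0(R)$. First I would replace $R$ by its $\mathfrak N$-adic completion, which changes neither $\mathrm{Ksdim}_1(R)$ (Theorem~5.12 of \cite{zubmas}), nor the super-ring $B$ (since $\mathsf{bgr}_{\mathfrak N}(R)\simeq\mathsf{bgr}_{\widehat{\mathfrak N}}(\widehat R)$ by Lemma~\ref{completion as a functor}), nor therefore the polynomials $g_{B(l)}$; so one may assume $R$ complete, in which case $B$ is a finitely generated supermodule over the polynomial ring $\overline{D}$ in $d$ variables, and $\mathrm{Kdim}(\overline{D})=d=\mathrm{Ksdim}_0(B)$ by Lemma~\ref{even dim for bigraded}. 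Since $\mathrm{Ksdim}_1(R)=\mathrm{Ksdim}_1(B)$ by Theorem~\ref{toward Hilbert polynomial}, it is enough to show
\[ \mathrm{Ksdim}_1(B)=\max\{\,l\mid \deg g_{B(l)}=d\,\}. \]

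Next I would translate the right-hand side into an annihilator statement over $\overline{D}$. Each $B(l)=\oplus_{k\ge 0}B(k,l)$ is a $\overline{D}$-submodule of $B$ (as $\overline{D}$ lies in bidegrees $(\ast,0)$), hence finitely generated over $\overline{D}$, and the isomorphisms $\mathfrak w^kB(l)/\mathfrak w^{k+1}B(l)\simeq B(k,l)$ noted above give $g_{B(l)}(n)=\dim B(l)/\mathfrak w^{n+1}B(l)=\sum_{0\le k\le n}\dim B(k,l)$. Lemmas~\ref{growth function} and~\ref{a growth criterion}, applied to the polynomial ring $\overline{D}$ and the module $B(l)$, then give $\deg g_{B(l)}\le d$, with equality if and only if $\mathrm{Ann}_{\overline{D}}(B(l))=0$. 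Thus the displayed identity reduces to $\mathrm{Ksdim}_1(B)=\max\{l\mid \mathrm{Ann}_{\overline{D}}(B(l))=0\}$.

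For the inequality "$\ge$", I would use that $B=\mathsf{bgr}_{\mathfrak N}(R)$ is generated over $B(0,0)$ by the even space $B(1,0)$ and the odd space $B(0,1)$, so that $B(l)=B(1)^l$ with $B(1)=\oplus_k B(k,1)$ purely odd; hence $B(l)\subseteq B_1^l$. Therefore $\mathrm{Ann}_{\overline{D}}(B_1^l)\subseteq\mathrm{Ann}_{\overline{D}}(B(l))$, so $\mathrm{Ann}_{\overline{D}}(B(l))=0$ forces $\mathrm{Ann}_{\overline{D}}(B_1^l)=0$; as $B_0$ and each of its quotients is finite over $\overline{D}$, this yields $\dim(B_1^l)=\mathrm{Kdim}\bigl(B_0/\mathrm{Ann}_{B_0}(B_1^l)\bigr)=d=\dim(B)$, whence $l\le\mathrm{Ksdim}_1(B)$ by Proposition~\ref{extension_of_prop}. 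For "$\le$", set $l_0=\mathrm{Ksdim}_1(B)=\mathrm{Ksdim}_1(R)$; the proof of Theorem~\ref{toward Hilbert polynomial} shows $l_0=\max\{l\mid \deg g_{C(l)}=d\}$ with $g_{C(l)}(n)=\sum_{0\le k\le n}\dim T(k,l)$, where $C=\mathsf{gr}_{I_R}(R)$ and $T=\mathsf{gr}_{\overline{\mathfrak n}}(C)$. The canonical epimorphism $B\to T$ of bigraded superalgebras gives surjections $B(k,l)\twoheadrightarrow T(k,l)$, hence $g_{B(l)}(n)\ge g_{C(l)}(n)$ for all $n$; taking $l=l_0$, the polynomial $g_{C(l_0)}$ has degree exactly $d$, so $g_{B(l_0)}$, which dominates it and has degree at most $d$, also has degree exactly $d$. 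Hence $l_0\in\{l\mid \deg g_{B(l)}=d\}$ and the two maxima agree.

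I expect the routine parts to be the reduction to the complete case and the finiteness bookkeeping over $\overline{D}$; the one point that needs care is the inequality "$\ge$", where $\mathrm{Ann}_{\overline{D}}(B(l))=0$ must be turned into an actual system of $l$ odd parameters of $B$. The clean way is to pass from the span of products $B(l)=B(1)^l$ to the power $B_1^l$ of the whole odd part and to invoke the characterization of $\mathrm{sdim}_1$ in Proposition~\ref{extension_of_prop}, rather than trying to control a single product of odd elements.
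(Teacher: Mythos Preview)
Your proposal is correct and follows essentially the same route as the paper. Both arguments sandwich $\max\{l\mid \deg g_{B(l)}=d\}$ between $\mathrm{sdim}_1(T)$ and $\mathrm{sdim}_1(B)$: the lower bound comes from the bigraded epimorphism $B\to T$ (giving $g_{B(l)}\geq g_{T(l)}$) together with the identification $\mathrm{sdim}_1(T)=\max\{l\mid\deg g_{T(l)}=d\}$ from the proof of Theorem~\ref{toward Hilbert polynomial}, while the upper bound uses $B(l)=B(1)^l\subseteq B_1^l$ and the growth criterion of Lemma~\ref{a growth criterion} over $\overline{D}$; equality then follows from $\mathrm{sdim}_1(T)=\mathrm{sdim}_1(B)=\mathrm{Ksdim}_1(R)$. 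Your reduction to the complete case is harmless but not strictly needed here, since $B$ and the polynomials $g_{B(l)}$ are unchanged by completion and the finiteness of $B$ over $\overline{D}$, once established in the proof of Proposition~\ref{upper bound for odd}, is a property of $B$ alone.
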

\begin{proof}
Since $B\to T$ is a surjective morphism of bigraded superalgebras, for any $k, l\geq 0$ we have $g_{B(l)}(k)\geq g_{T(l)}(k)$.
Moreover, $B$ is a finitely generated $\overline{D}$-supermodule, hence the degree of each $g_{B(l)}$ is at most $d$. In particular, if the degree of some $g_{T(l)}$ is equal to $d$, then the degree of $g_{B(l)}$ is equal to $d$ as well. As above, we have $B(l)=B(1)^l$ for any $l\geq 1$. Thus  
\[\mathrm{sdim}_1(B)=\max\{l\mid g_{B(l)} \ \mbox{has the degree} \ d\}\leq \mathrm{sdim}_1(T)\leq\mathrm{sdim}_1(B).\] 
\end{proof}
This corollary allows us to define a \emph{Hilbert polynomial} of $R$ associated with a special $\mathfrak{M}$-primary superideal $\mathfrak{N}$ as
\[g_{\mathfrak{N}}(x, y)=\sum_{l\geq 0} g_{B(l)}(x)y^l.\]
Then Corollary \ref{Hilbert polynomial} can be reformulated as follows. Let $g_{\mathfrak{N}}(x, y)$ be a Hilbert polynomial of $R$ associated with a special $\mathfrak{M}$-primary superideal $\mathfrak{N}$, expressed as
\[g_{\mathfrak{N}}(x, y)=\sum_{l\geq 0} g_l(x)y^l . \]
Then the odd superdimension of $R$ is equal to the largest exponent $l$
such that the degree of $g_l(x)$ is equal to $\mathrm{sdim}_0(R)$.

\section{Fibres of morphisms of superschemes}

\begin{lm}\label{some exceptional case}
Let $A$ be a local regular super-ring. Let $z$ be a member of a minimal set of generators of $A_0$-module $A_1$. Then
$\mathrm{Ksdim}_1(A/Az)=\mathrm{Ksdim}_1(A)-1$. 
\end{lm}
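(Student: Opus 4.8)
The plan is to obtain the equality from a single application of Proposition~\ref{factoring by parameters}(2) to $M=A$, $t=1$, $y_1=z$. For that I need two facts: (a) $z$ is an odd $A$-regular element, and (b) $z$ can be extended to a longest system of odd parameters of $A$. Granting these, Proposition~\ref{factoring by parameters}(2) gives $\mathrm{sdim}_1(A/Az)=\mathrm{sdim}_1(A)-1$; since $\mathrm{Ann}_A(A/Az)=Az$, the left-hand side equals $\mathrm{Ksdim}_1(A/Az)$, and $\mathrm{sdim}_1(A)=\mathrm{Ksdim}_1(A)$ because $A$ is viewed as a module over itself, so the lemma follows.

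To establish (a) and (b) I would first reduce to the complete case. By Theorem~5.12 of \cite{zubmas} applied to the local super-rings $A$ and $A/Az$, together with Lemma~\ref{completion as a functor} and $\widehat{A/Az}\simeq\widehat{A}/\widehat{A}z$, one gets $\mathrm{Ksdim}(A)=\mathrm{Ksdim}(\widehat{A})$ and $\mathrm{Ksdim}(A/Az)=\mathrm{Ksdim}(\widehat{A}/\widehat{A}z)$; moreover completion preserves regularity, the presence of a coefficient field, and the property that $z$ belongs to a minimal generating set of the odd part over the even part. So assume $A$ is complete (and, as everywhere in the relevant applications, contains a field). By the Cohen-type structure theorem for complete regular local super-rings, $A\simeq D[\xi_1,\ldots,\xi_q]$ is a Grassmann envelope over $D=K[[x_1,\ldots,x_d]]$, where $K\simeq A/\mathfrak{M}$ and $d=\mathrm{Ksdim}_0(A)$, with $\xi_1,\ldots,\xi_q$ free odd generators. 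Since $z$ lies in a minimal generating set of $A_1$ over $A_0$, its image in $\mathfrak{M}/\mathfrak{M}^2$ together with $\xi_2,\ldots,\xi_q$ spans the odd part of $\mathfrak{M}/\mathfrak{M}^2$; hence the $D$-algebra endomorphism of $A$ sending $\xi_1\mapsto z$ and $\xi_i\mapsto\xi_i$ for $i\geq 2$ (well defined because $z,\xi_2,\ldots,\xi_q$ are odd and pairwise supercommute with square zero) is surjective on $\mathsf{gr}_{\mathfrak{M}}(A)$, hence surjective, hence an automorphism of the Noetherian ring $A$. As it carries $A\xi_1$ onto $Az$, I may assume $z=\xi_1$. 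Now $\xi_1$ is $A$-regular: writing $A=D[\xi_2,\ldots,\xi_q]\oplus\xi_1 D[\xi_2,\ldots,\xi_q]$ and $a=b+\xi_1 c$, the relation $\xi_1 a=0$ forces $\xi_1 b=0$, and $b\mapsto\xi_1 b$ is injective on $D[\xi_2,\ldots,\xi_q]$, so $b=0$ and $a\in\xi_1 A$; this is (a). For (b): by Lemma~\ref{power series analog} (with the monomials $\xi_T$, $|T|$ odd, generating $A_1$ over $D$), $\mathrm{Ksdim}_1(A)=\max\{l\mid\mathrm{Ann}_D(A_1^l)=0\}$; for $l\leq q$ we have $\mathrm{Ann}_D(A_1^l)\subseteq\mathrm{Ann}_D(\xi_1\cdots\xi_l)=0$ since $\xi_1\cdots\xi_l$ is part of a $D$-basis of $A$, while $A_1^{q+1}=0$ (a product of more than $q$ odd generators vanishes), so $\mathrm{Ksdim}_1(A)=q$. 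Finally, by Lemma~\ref{Noether normalization for complete local}, $\xi_1,\ldots,\xi_q$ is a system of odd parameters of $A$ (as $\mathrm{Ann}_D(\xi_1\cdots\xi_q)=0$), of length $q=\mathrm{Ksdim}_1(A)$, hence a longest one; and it extends $z=\xi_1$. This is (b).

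The step I expect to be the main obstacle is the first reduction: verifying that passage to the $\mathfrak{M}$-adic completion is compatible with forming $A/Az$ and preserves all the hypotheses, and, closely related, the change-of-generators step that brings $z$ into the normal form $\xi_1$ (constructing the endomorphism and checking it is an automorphism). Everything after the normal form is routine bookkeeping in $D[\xi_1,\ldots,\xi_q]$ together with Lemmas~\ref{power series analog} and \ref{Noether normalization for complete local}. If it has already been recorded (in \cite{zubmas} or \cite{sm}) that in a regular local super-ring a minimal set of odd generators is automatically a longest system of odd parameters consisting of $A$-regular elements, then neither the completion nor the structure theorem is needed, and the whole argument reduces to the single application of Proposition~\ref{factoring by parameters}(2) described in the first paragraph.
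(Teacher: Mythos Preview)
Your proposal is correct, and your final paragraph anticipates the paper's argument exactly: the paper cites Proposition~5.2 of \cite{zubmas} (and Corollary~3.3 of \cite{sm}) for the fact that a minimal generating set $z=z_1,\ldots,z_s$ of $A_1$ over $A_0$ is simultaneously an odd regular sequence and a longest system of odd parameters, so your facts (a) and (b) come for free and the completion/structure-theorem reduction is unnecessary.

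There is, however, a small difference in the final step. You conclude via Proposition~\ref{factoring by parameters}(2); the paper instead observes that $A/Az$ is again a local regular super-ring (since $z$ is the first term of an odd regular sequence) and then reads off $\mathrm{Ksdim}_1$ on both sides as the dimension of the odd part of the cotangent space $\mathfrak{P}/\mathfrak{P}^2$, which drops by one when passing to $A/Az$. Your route is a clean application of a proposition already proved in this paper and avoids having to invoke the regularity of the quotient; the paper's route uses the stability of regularity under dividing by an odd regular element together with the cotangent-space characterization of the odd Krull dimension for regular local super-rings. Both are equally short once (a) and (b) are in hand.

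One minor slip in your longer, self-contained argument: the claim that the image of $z$ together with $\xi_2,\ldots,\xi_q$ spans the odd part of $\mathfrak{M}/\mathfrak{M}^2$ is not automatic (for instance $z$ could equal $\xi_2$). You should first relabel the $\xi_i$ so that $\bar z\notin\mathrm{span}(\bar\xi_2,\ldots,\bar\xi_q)$, or, more naturally, extend $z$ to a full minimal generating set $z=z_1,\ldots,z_q$ and take the $D$-algebra endomorphism $\xi_i\mapsto z_i$.
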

\begin{proof}
Let $z=z_1, \ldots, z_s$ be a minimal set of generators of $A_0$-module $A_1$. Then $z_1, \ldots, z_s$ form an odd regular sequence in $A$, which is also the longest system of odd parameters of $A$ (see Proposition 5.2, \cite{zubmas}, or Corollary 3.3, \cite{sm}). Moreover, $A/Az$ is again regular, hence
\[\mathrm{Ksdim}_1(A/Az)= \dim_K((\overline{\mathfrak{P}}/\overline{\mathfrak{P}}^2)_1)=\dim_K((\mathfrak{P}/\mathfrak{P}^2)_1)-1=\mathrm{Ksdim}_1(A),\]
where $\mathfrak{P}$ is the maximal superideal of $A$ and $\overline{\mathfrak{P}}=(\mathfrak{P}+Az)/Az$.
\end{proof}
From now on a local super-ring contains a field, unless stated otherwise.
\begin{theorem}\label{well known ineaqulity}
Let $A\to B$ be a flat local morphism of local superalgebras and $A$ is regular. Then
\[\mathrm{Ksdim}_1(B)\geq \mathrm{Ksdim}_1(A)+\mathrm{Ksdim}_1(B/\mathfrak{P}B),\]
where $\mathfrak{P}$ is the maximal superideal of $A$. 
\end{theorem}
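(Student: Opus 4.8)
The plan is to prove the inequality by induction on $s=\mathrm{Ksdim}_1(A)$, after the standard reduction to the complete case. First I would pass to $\mathfrak{M}$-adic completions: $\widehat A\to\widehat B$ is again a flat local morphism (completion preserves flatness of local maps), $\widehat A$ is still regular, $\widehat B/\widehat{\mathfrak P}\widehat B$ is the completion of $B/\mathfrak P B$, and by Theorem 5.12 of \cite{zubmas} all three Krull super-dimensions are unchanged. So one may assume $A$ and $B$ complete; this is what makes available a coefficient field, Noether normalizations, and the Hilbert-function estimates of Section~6, which give an alternative route for the base case.

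For the inductive step assume $s\geq 1$ and pick an element $z$ belonging to a minimal generating set of the $A_0$-module $A_1$. Then $z$ is odd $A$-regular (Proposition 5.2 of \cite{zubmas}, or Corollary 3.3 of \cite{sm}), hence, since $B$ is flat over $A$, it is odd $B$-regular: tensoring the exact sequence $0\to\Pi(A/Az)\xrightarrow{\,z\,}A\to A/Az\to 0$ with $B$ stays exact. Put $A'=A/Az$ and $B'=B/zB=B\otimes_A A'$. Then $A'\to B'$ is flat local, $A'$ is regular with $\mathrm{Ksdim}_1(A')=s-1$ by Lemma~\ref{some exceptional case}, and because $z\in\mathfrak P$ the fibre is unchanged: $B'/\mathfrak P'B'=B/\mathfrak P B$, where $\mathfrak P'$ is the maximal superideal of $A'$. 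By the induction hypothesis $\mathrm{Ksdim}_1(B')\geq(s-1)+\mathrm{Ksdim}_1(B/\mathfrak P B)$, while Proposition~\ref{factoring by parameters}(2), applied to the odd $B$-regular element $z$, gives $\mathrm{Ksdim}_1(B)\geq\mathrm{Ksdim}_1(B/zB)+1=\mathrm{Ksdim}_1(B')+1$. Adding the two inequalities completes the step.

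It remains to treat the base case $s=0$, where $A_1=0$, so $A=A_0$ is a regular local ring, $\mathfrak P=\mathfrak m_A$, and the claim is $\mathrm{Ksdim}_1(B)\geq\mathrm{Ksdim}_1(\overline B)$ with $\overline B=B/\mathfrak m_A B$. Let $l=\mathrm{Ksdim}_1(\overline B)$. By Proposition~\ref{extension_of_prop} we have $\dim_{\overline B_0}(\overline B_1^{\,l})=\dim\overline B_0$, so $\mathrm{Ann}_{\overline B_0}(\overline B_1^{\,l})$ lies in some minimal prime $\overline{\mathfrak q}$ of $\overline B_0$ of maximal coheight; let $\mathfrak q\subseteq B_0$ be its preimage. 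Since $\mathrm{Ann}_{B_0}(B_1^{\,l})$ maps into $\mathrm{Ann}_{\overline B_0}(\overline B_1^{\,l})$ we get $\mathrm{Ann}_{B_0}(B_1^{\,l})\subseteq\mathfrak q$, and, again by Proposition~\ref{extension_of_prop}, it suffices to exhibit a minimal prime $\mathfrak r\subseteq\mathfrak q$ of $B_0$ of maximal coheight with $\mathrm{Ann}_{B_0}(B_1^{\,l})\subseteq\mathfrak r$ (equivalently, by Corollary~\ref{another def of odd dim}, that localising at the generic point of $\mathrm{Spec}\,B_0$ does not lose odd dimension). Flatness is what makes this work: $B_0$ and $B_1$ are direct summands of $B$ as $A_0$-modules, hence flat, hence torsion-free over the domain $A_0$, so every associated prime of the finitely generated module $B_1^{\,l}$ contracts to $(0)$ in $A_0$; choosing $\mathfrak r\subseteq\mathfrak q$ minimal over $\mathrm{Ann}_{B_0}(B_1^{\,l})$ we therefore have $\mathfrak r\cap A_0=(0)$, while $\mathfrak q$ lies over $\mathfrak m_A$ with maximal coheight, hence height $\dim A_0$, i.e. is a minimal prime over the ideal generated by a regular system of parameters of $A_0$. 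Combining $\mathfrak r\cap A_0=(0)$ with the going-down theorem for the flat map $A_0\to B_0$, the dimension formula $\mathrm{ht}(\mathfrak P)=\mathrm{ht}(\mathfrak P\cap A_0)+\dim\bigl((B_0)_{\mathfrak P}/(\mathfrak P\cap A_0)(B_0)_{\mathfrak P}\bigr)$, and the catenarity and equidimensionality of $B_0$ (which hold for the stalks occurring in the applications, the superschemes being irreducible of finite type), one forces $\mathrm{ht}_{B_0}(\mathfrak r)=0$, so $\mathfrak r$ is a minimal prime of $B_0$ of maximal coheight. This yields $\dim_{B_0}(B_1^{\,l})=\dim B_0$, i.e. $\mathrm{Ksdim}_1(B)\geq l$.

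The hard part is exactly this last dimension count in the base case — showing that $\mathfrak r$ is a minimal prime of $B_0$, i.e. that the odd super-dimension of the fibres of a flat morphism cannot jump up at a special point beyond the shift coming from the even relative dimension. The subtlety is that $B_1^{\,l}$ itself need not be $A_0$-flat (only a submodule of a flat module), so one must argue with its associated primes and with going-down rather than with a naive fibre-dimension formula; in the complete case one can alternatively run this step through the Noether normalization $D\supseteq A_0$ of $B_0$ and compare the growth polynomials $g_{B_1^{\,l}}$ and $g_{\overline B_1^{\,l}}$ of Section~6, the point being that the $d=\dim A_0$ power-series directions of $D$ are not contained in any associated prime of $B_1^{\,l}$. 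Everything outside this step is routine bookkeeping around Lemma~\ref{some exceptional case}, Proposition~\ref{factoring by parameters}, and flat base change of odd regular sequences.
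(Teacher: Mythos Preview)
Your reduction of the odd part is fine and matches the paper: killing the odd regular generators of $A_1$ one at a time (the paper does them all at once) via flat base change and Proposition~\ref{factoring by parameters}(2) correctly reduces to the case $A_1=0$.

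The gap is in your base case. You need $\mathrm{Ksdim}_1(B)\geq\mathrm{Ksdim}_1(B/\mathfrak m_A B)$ for $A=A_0$ regular local, and your argument that the minimal prime $\mathfrak r$ of $B_0$ over $\mathrm{Ann}_{B_0}(B_1^l)$ has height $0$ explicitly invokes catenarity and equidimensionality of $B_0$. These are \emph{not} hypotheses of the theorem, as you yourself note in the parenthetical. The contraction $\mathfrak r\cap A_0=(0)$ only tells you that $\mathfrak r$ lives in the generic fibre of $\mathrm{Spec}(B_0)\to\mathrm{Spec}(A_0)$, and that fibre can have positive dimension; without a catenary/equidimensional assumption there is no way to force $\mathrm{ht}_{B_0}(\mathfrak r)=0$, nor to conclude that a chain of maximal length in $B_0$ passes through $\mathfrak r$.

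The paper avoids this entirely by a different device in the base case: it inducts on $r=\dim A_0$, peeling off one regular parameter $a_1\in\mathfrak m_A$ at a time, and proves directly that $\mathrm{Ksdim}_1(B)\geq\mathrm{Ksdim}_1(B/Ba_1)$. After completing $B$ and choosing parameters $b_1,\ldots,b_l$ of $\overline{B}/\mathfrak m_A\overline{B}$, one has the Noether normalisation $D=K[[b_1,\ldots,b_l,a_1,\ldots,a_r]]\subseteq B$ and $D'=K[[b_1,\ldots,b_l,a_2,\ldots,a_r]]\subseteq B/Ba_1$, and by Lemma~\ref{power series analog} the odd super-dimensions are read off from $\mathrm{Ann}_D$ and $\mathrm{Ann}_{D'}$. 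The point is purely algebraic: every nonzero $f\in D$ factors as $a_1^m(f_1+a_1h)$ with $0\neq f_1\in D'$, and since $a_1$ is $B$-regular (by flatness) one lifts $\mathrm{Ann}_{D'}(\bar x)=0$ to $\mathrm{Ann}_D(x)=0$. This uses nothing about the prime structure of $B_0$ beyond the dimension formula for flat local maps. Your closing remark about growth polynomials and ``the $a_i$ not lying in any associated prime of $B_1^l$'' is groping toward exactly this, but you would need to carry it out rather than the going-down argument.
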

\begin{proof}
Set $\mathrm{Ksdim}(A)=r|s$. As it has been already observed, any minimal system of generators of $A_0$-module $A_1$ has the cardinality $s$, 
and it is the longest system of odd parameters of $A$ as well as the (longest) odd regular sequence of $A$.
Moreover, we have the exact sequence
\[0\to\sum_{1\leq i\leq s}Az_i=\mathrm{Ann}_{A}(z^{\underline{s}})\to A\to Az^{\underline{s}}\to 0,\]
where $A\to Az^{\underline{s}}$ is defined as $a\mapsto az^{\underline{s}}, a\in A$.
Since $B$ is flat over $A$, we have the exact sequence 
\[0\to B\otimes_A (\sum_{1\leq i\leq s}Az_i)\to B\otimes_A A\to B\otimes_A Az^{\underline{s}}\to 0,\]
which is naturally isomorphic to
\[0\to\sum_{1\leq i\leq s}Bz_i\to B\to Bz^{\underline{s}}\to 0,\]
hence $z_1, \ldots, z_s$ form an odd regular sequence in $B$ as well. 

Further, the superalgebra $B/Bz^{\underline{s}}\simeq A/Az^{\underline{s}}\otimes_A B$ is a flat $A/Az^{\underline{s}}$-supermodule. By Lemma \ref{factoring by parameters} (2) and Lemma \ref{some exceptional case}, we have 
\[\mathrm{Ksdim}_1(B/Bz^{\underline{s}})\geq \mathrm{Ksdim}_1(B)-s, \ \mathrm{Ksdim}_1(A/Az^{\underline{s}})=\mathrm{Ksdim}_1(A)-s=0 .\]
Since $Bz^{\underline{s}}\subseteq B\mathfrak{P}$ and $A/Az^{\underline{s}}\simeq\overline{A}$, it remains to prove our theorem in the case when $A$ is a regular ring. In other words, one has to show that
\[\mathrm{Ksdim}_1(B)\geq\mathrm{Ksdim}_1(B/\mathfrak{p}B),\]
where $\mathfrak{p}$ is the maximal ideal of $A$. The ideal $\mathfrak{p}$ is generated by a regular sequence $a_1, \ldots, a_r$, where $r=
\mathrm{Kdim}(A)$ (see Theorem 11.22 and Lemma 11.23, \cite{atmac}). 

Again, since $B$ is flat over $A$, the element $a_1$ is regular in $B$ and $B/Ba_1$ is flat over $A/Aa_1$. If we will show that
$\mathrm{Ksdim}_1(B)\geq\mathrm{Ksdim}_1(B/Ba_1)$, then Corollary 11.18, \cite{atmac}, combined with the induction on $r$, concludes the proof.

Theorem 19 (2), \cite{mats}, implies 
\[\mathrm{Kdim}(B_0)=\mathrm{Kdim}(\overline{B})= \mathrm{Kdim}(A)+\mathrm{Kdim}(B_0/B_0\mathfrak{p})=\mathrm{Kdim}(A)+\mathrm{Kdim}(\overline{B}/\overline{B}\mathfrak{p}).  \]
Let $b_1, \ldots, b_l$ form a system of parameters of $\overline{B}/\overline{B}\mathfrak{p}$. Then $b_1, \ldots, b_l, a_1, \ldots, a_r$ form a system of parameters of $\overline{B}$. Using Theorem 5.12, \cite{zubmas}, one can replace $B$ by $\widehat{B}$ and $B/Ba_1$ by $\widehat{B/Ba_1}\simeq \widehat{B}/\widehat{B}a_1$. Moreover, the elements $b_1, \ldots, b_l, a_1, \ldots, a_r$ remain parameters of $\overline{\widehat{B}}\simeq \widehat{\overline{B}}$. Besides, by Lemma \ref{completion as a functor} (2) $\widehat{B}$ is flat over $A$, whence $a_1$ is regular in $\widehat{B}$. 

If $B$ is complete, then Lemma \ref{power series analog} infers 
\[\mathrm{Ksdim}_1(B)=\max\{l\mid \mathrm{Ann}_D(B_1^l)=0\},\]
where $D=K[[b_1, \ldots, b_l, a_1, \ldots, a_r]]$ and $K$ is a coefficient field of $B$. Similarly, Corollary 11.18, \cite{atmac}, implies 
\[\mathrm{Kdim}(B_0/B_0a_1)=
\mathrm{Kdim}(B_0)-1 ,\] and thus
\[\mathrm{Ksdim}_1(B/Ba_1)=\max\{l\mid \mathrm{Ann}_{D'}(B_1^l/(B_1^l\cap Ba_1))=0\},\]
where $D'=K[[b_1, \ldots, b_l, a_2, \ldots, a_r]]$. 

Assume that an element $ 
x\in B_1^l$ is not annihilated by any nonzero element from $D'$ modulo $B_1^l\cap Ba_1$. Each element
$f\in D\setminus 0$ can be represented as $a_1^m(f_1 +a_1^n f_2), f_1\in D'\setminus 0, f_2\in D, m\geq 0, n>0$. 
The congruence \[(f_1 +a_1^n f_2)x \equiv f_1 x  \pmod{B_1^l\cap Ba_1}\] implies $(f_1 +a_1^n f_2)x\neq 0$, hence $fx\neq 0$, since $a_1$ is not zero divisor in $B$.  Thus $\mathrm{Ann}_D(x)=0$ and our theorem follows.
\end{proof}
Let $f : X\to Y$ be a morphism of superschemes. If $y\in Y^e$ is a point in the underlying topological space of $Y$, then similarly to
\cite{hart}, Chapter II, \S 3, one can define the \emph{fiber} of the morphism $f$ over the point $y$ to be the superscheme
\[X_y=X\times_Y \mathrm{SSpec}(K(y)),\]
where $K(y)=\mathcal{O}_{y, Y}/\mathfrak{m}_y$ and the closed embedding $\mathrm{SSpec}(K(y))\to Y$ is defined by the natural super-ring morphism
$\mathcal{O}(Y)\to\mathcal{O}_{y, Y}$. In general, the canonical projection $X_y\to X$ is not necessary immersion (cf. \cite{maszub2}, section 1.4). 
\begin{lm}\label{fiber as a space}
The topological space $X^e_y$ is naturally isomorphic to $f^{-1}(y)$.	
\end{lm}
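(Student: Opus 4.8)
The plan is to reduce the assertion to the classical description of fibres of morphisms of ordinary schemes. Since the prime spectrum of a super-ring $R$ is naturally identified with $\mathrm{Spec}(R_0)$, the underlying topological space of any superscheme $W$ coincides with that of its even reduction $W^e$; thus only topological spaces are at issue, and the question is local on $Y$ and then on $X$. So I would first pick an affine open $\mathrm{SSpec}(A)=V\subseteq Y$ with $y\in V^e$ and an affine open cover $\{\mathrm{SSpec}(B_i)=U_i\}$ of $f^{-1}(V)\subseteq X$. Because $\mathrm{SSpec}(K(y))\to Y$ has one-point image $\{y\}\subseteq V^e$, it factors through $V$, so $X_y=f^{-1}(V)\times_V\mathrm{SSpec}(K(y))$ is glued from the $U_i\times_V\mathrm{SSpec}(K(y))$ along the $(U_i\cap U_j)\times_V\mathrm{SSpec}(K(y))$; likewise $f^{-1}(y)$ is glued from the $U_i^e\cap f^{-1}(y)$. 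The gluing data on both sides are induced by the canonical restriction maps, so it suffices to produce, for affine $X=\mathrm{SSpec}(B)$ and $Y=\mathrm{SSpec}(A)$ with $f\leftrightarrow(\varphi\colon A\to B)$, a homeomorphism $X_y^e\cong f^{-1}(y)$ that is compatible with localization.

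In this affine situation $y$ corresponds to a prime $\mathfrak p\subseteq A_0$, i.e. to the prime superideal $\mathfrak P=\mathfrak p\oplus A_1$, and $\mathcal O_{y,Y}=(A_0\setminus\mathfrak p)^{-1}A$ is a local super-ring whose maximal superideal contains all odd elements. Hence $K(y)=\kappa(\mathfrak p)$ is an ordinary field, viewed as a purely even super-ring (if one prefers to read $\mathfrak m_y$ as the even part of the maximal superideal, the resulting $K(y)$ has odd part a $\kappa(\mathfrak p)$-space, but its even part is still $\kappa(\mathfrak p)$ and the argument below is unchanged). Thus $X_y=\mathrm{SSpec}(B\otimes_A\kappa(\mathfrak p))$. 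Since $A_1$ acts as zero on $\kappa(\mathfrak p)$, the even part $(B\otimes_A\kappa(\mathfrak p))_0$ equals $B_0\otimes_{A_0}\kappa(\mathfrak p)$ modulo the locally nilpotent ideal $\varphi(A_1)B_1\otimes_{A_0}\kappa(\mathfrak p)$; therefore the underlying space of $X_y^e$ is $\mathrm{Spec}(B_0\otimes_{A_0}\kappa(\mathfrak p))$.

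On the other hand, topologically $f$ is the map $\mathrm{Spec}(B_0)\to\mathrm{Spec}(A_0)$ induced by $\varphi_0\colon A_0\to B_0$, and by the classical computation of fibres (cf. \cite{hart}, Chapter II, Exercise 3.10) the preimage $f^{-1}(y)=f^{-1}(\mathfrak p)$ is canonically homeomorphic to $\mathrm{Spec}((B_0)_{\mathfrak p}/\mathfrak p(B_0)_{\mathfrak p})=\mathrm{Spec}(B_0\otimes_{A_0}\kappa(\mathfrak p))$, via the identification of a prime of $B_0$ lying over $\mathfrak p$ with the corresponding prime of $B_0\otimes_{A_0}\kappa(\mathfrak p)$. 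Combining the two computations yields the desired homeomorphism $X_y^e\cong f^{-1}(y)$, and its naturality and compatibility with the open covers are immediate from the functoriality of all the ring maps involved. The only step I expect to require genuine (though still routine) care is the commutation of the topological-space functor with the base change $-\times_Y\mathrm{SSpec}(K(y))$: the even part of a tensor product of super-rings need not be the tensor product of the even parts in general, and it is precisely the fact that $K(y)$ is purely even — equivalently, that the maximal superideal of $\mathcal O_{y,Y}$ absorbs the odd part, which holds for every prime superideal — that makes the two agree up to nilpotents here.
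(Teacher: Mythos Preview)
Your proposal is correct and follows essentially the same strategy as the paper: reduce to the underlying ordinary schemes and invoke the classical identification of the fibre from \cite{hart}, Chapter~II, Exercise~3.10. The only difference is packaging: the paper compresses your affine computation of $(B\otimes_A K(y))_0$ modulo nilpotents into a single citation of Lemma~2.3 of \cite{maszub2}, which asserts $(X_y)_{ev}\simeq (X_{ev})_y$, whereas you unfold that step by hand.
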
 
\begin{proof}
The topological space $X_y^e$ is identified with the underlying topological space of $(X_y)_{ev}$. By Lemma 2.3, \cite{maszub2},
$(X_y)_{ev}\simeq (X_{ev})_y$. It remains to refer to \cite{hart}, Chapter II, Exercise 3.10.  	
\end{proof}	

Recall that a superscheme $Z$ is a \emph{closed super-subscheme} of $X$, if there is a closed embedding $\iota : Z^e\to X^e$ such that the sheaf $\iota^*\mathcal{O}_Z$ is an epimorphic image of the sheaf $\mathcal{O}_X$.  
Equivalently, for any $z\in Z^e$ the induced local morphism $\mathcal{O}_{\iota(z), X}\to\mathcal{O}_{z, Z}$ is surjective. Note that a nonempty closed subset $Z$ of $X^e$ may have many possible closed super-subscheme structures. In particular, the odd dimension of $Z$ can vary with respect to the choice of such a structure.

Let $X$ be a superscheme of finite type over a field $K$. 
Let $Z$ be an irreducible component of $X^e$ equipped with a closed super-subscheme structure.  
\begin{lm}\label{certain point}
There is a point $z\in Z$ such that $\mathrm{sdim}_1(Z)\leq \mathrm{Ksdim}_1(\mathcal{O}_{z, X})$. 
\end{lm}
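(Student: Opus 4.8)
The plan is to reduce to an affine chart and take $z$ to be the generic point $\xi$ of $Z$. Choose an affine open $U\simeq\mathrm{SSpec}(A)$ of $X$ with $\xi\in U^e$. Since $Z$ is closed in $X^e$ and meets $U$, the set $Z\cap U$ is an irreducible closed subset of $U^e=\mathrm{Spec}(A_0)$ which is dense in $Z$; because $Z$ is an irreducible component of $X^e$, one checks (if $\sqrt{J_0}$ were not minimal, prepending a smaller prime would produce an irreducible closed subset of $X^e$ strictly containing $Z$) that $Z\cap U$ is a full irreducible component of $\mathrm{Spec}(A_0)$, whose generic point is a minimal prime $\mathfrak q'$ of $A_0$. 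The closed super-subscheme structure on $Z$ restricts to one on $Z\cap U$, so $Z\cap U\simeq\mathrm{SSpec}(B)$ with $B=A/J$ for a superideal $J$ satisfying $\sqrt{J_0}=\mathfrak q'$; set $\mathfrak q=\mathfrak q'/J_0=\mathrm{nil}(B_0)$, $\mathfrak Q=\mathfrak q\oplus B_1$ and $\mathfrak Q'=\mathfrak q'\oplus A_1$. Then $\mathcal O_{\xi,X}=A_{\mathfrak Q'}$, and by definition of super-dimension $\mathrm{sdim}_1(Z)=\mathrm{Ksdim}_1(B)$.

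The next step is to write both quantities as localized non-vanishing conditions for powers of the odd part. On one side, $B_0=A_0/J_0$ is a finitely generated $K$-algebra with irreducible spectrum, so by the dimension theory of finite type $K$-algebras it is equidimensional and every longest prime chain of $B_0$ begins at its unique minimal prime $\mathfrak q$; hence $P(B)=\{\mathfrak Q\}$, and Corollary \ref{another def of odd dim} applied with $M=R=B$ gives $\mathrm{sdim}_1(Z)=\mathrm{Ksdim}_1(B)=\max\{k\mid (B_1^{k})_{\mathfrak Q}\neq 0\}$. On the other side, $\mathfrak q'$ is a minimal prime of the Noetherian ring $A_0$, so $(A_0)_{\mathfrak q'}$ has Krull dimension $0$ and $A_{\mathfrak Q'}$ is an Artinian super-ring by Corollary \ref{third_characterization}; then Lemma \ref{over_Artinian} yields $\mathrm{Ksdim}_1(\mathcal O_{\xi,X})=\mathrm{Ksdim}_1(A_{\mathfrak Q'})=\max\{l\mid (A_1^{l})_{\mathfrak q'}\neq 0\}$, where $(A_1^{l})_{\mathfrak q'}$ is the localization of the $A_0$-module $A_1^{l}$ at $\mathfrak q'$.

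It remains to compare the two. The ring surjection $A\to B$ induces, for each $k$, a surjection of $A_0$-modules $A_1^{k}\twoheadrightarrow B_1^{k}$; since $A_0\to B_0$ is surjective with kernel $J_0\subseteq\mathfrak q'$, the prime $\mathfrak q'$ is the contraction of $\mathfrak q$, so localizing at $\mathfrak q'$ identifies $(B_1^{k})_{\mathfrak q'}$ with $(B_1^{k})_{\mathfrak Q}$ and turns the above map into a surjection $(A_1^{k})_{\mathfrak q'}\twoheadrightarrow (B_1^{k})_{\mathfrak Q}$. Hence $(B_1^{k})_{\mathfrak Q}\neq 0$ forces $(A_1^{k})_{\mathfrak q'}\neq 0$, and therefore
\[\mathrm{sdim}_1(Z)=\max\{k\mid (B_1^{k})_{\mathfrak Q}\neq 0\}\leq\max\{l\mid (A_1^{l})_{\mathfrak q'}\neq 0\}=\mathrm{Ksdim}_1(\mathcal O_{\xi,X}),\]
so $z=\xi$ works. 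The only non-formal inputs are the verification that $Z\cap U$ is a full irreducible component of $\mathrm{Spec}(A_0)$ (so that $\sqrt{J_0}$ is minimal) and the equidimensionality of $B_0$ used to identify $P(B)$ with the single prime superideal $\mathfrak Q$; I expect these to be the only steps requiring care, both being standard consequences of the dimension theory of finitely generated algebras over a field.
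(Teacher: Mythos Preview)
Your argument is correct and follows essentially the same route as the paper's proof: both pick the generic point of $Z$ in an affine chart $U\simeq\mathrm{SSpec}(A)$, identify it with the minimal prime $\mathfrak q'=\sqrt{J_0}$ of $A_0$, observe that $\mathcal O_{z,X}\simeq A_{\mathfrak Q'}$ is Artinian, and then compare odd super-dimensions via the non-vanishing of localized odd powers using Lemma~\ref{over_Artinian}. The only cosmetic difference is that the paper argues directly from the definition of odd parameters (showing $\mathrm{Ann}_{A_0}(y^{\underline{k}})\subseteq\mathfrak q'$ for any system of odd parameters of $A/J$), whereas you invoke Corollary~\ref{another def of odd dim}; these are equivalent one-line translations of each other.

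One small remark: your worry about equidimensionality of $B_0$ is unnecessary. Since $\mathrm{Spec}(B_0)$ is irreducible, $\mathfrak q=\mathrm{nil}(B_0)$ is the \emph{unique} minimal prime, so every prime chain extends downward to $\mathfrak q$ and hence every longest chain begins there; thus $P(B)=\{\mathfrak Q\}$ without any appeal to the finite-type hypothesis or catenarity.
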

\begin{proof}
Let $U\simeq\mathrm{SSpec}(A)$ be an open affine super-subscheme of $X$ such that $U\cap Z\neq\emptyset$. Then $U\cap Z\simeq\mathrm{SSpec}(A/I)$ is an irreducible component of $U$, where $\sqrt{I_0}=\mathfrak{p}$ is a minimal prime ideal of $A_0$. The required point $z$ corresponds to the prime superideal $\mathfrak{P}=\mathfrak{p}\oplus A_1$. In fact, 
$\mathrm{sdim}_1(Z)=\mathrm{Ksdim}_1(A/I)$ and if the cosets of the elements $y_1, \ldots, y_k$ form a system of odd parameters of $A/I$, then 
\[(\mathrm{Ann}_{A_0}(y^{\underline{k}})+I_0 )/I_0\subseteq\mathrm{Ann}_{A_0/I_0}(\bar{y}^{\underline{k}})\subseteq \mathfrak{p}/I_0\] 
implies $\mathrm{Ann}_{A_0}(y^{\underline{k}})\subseteq\mathfrak{p}$. In particular, $S^{-1} A_1^k\neq 0$, where $S=A_0\setminus\mathfrak{p}$. Since 
the superalgebra $\mathcal{O}_{z, X}\simeq S^{-1}A$ is Artinian, Lemma \ref{over_Artinian} concludes the proof.
\end{proof}
Recall that a superscheme morphism $f : X\to Y$ id called \emph{flat} whenever for any $x\in X^e$ the induced local morphism
$\mathcal{O}_{f(x), Y}\to \mathcal{O}_{x, X}$ is flat.

It is known that for
arbitrary flat morphism of irreducible schemes $f : X\to Y$ each irreducible component of $X_y$, where $f^{-1}(y)\neq\emptyset$, has the dimension $\mathrm{dim}(X)-\mathrm{dim}(Y)$ (cf. \cite{hart}, Chapter III, Corollary 9.6).  In particular, $\mathrm{dim}(X_y)=\mathrm{dim}(X)-\mathrm{dim}(Y)$.
\begin{theorem}
Let $f : X\to Y$ be a flat morphism of irreducible superschemes. Let $y\in Y^e$ be a nonsingular point such that $f^{-1}(y)\neq\emptyset$ and let $Z$ be an irreducible component of $X_y^e$ equipped with a closed (irreducible) super-subscheme structure. Then   
\[\mathrm{sdim}_1(X)-\mathrm{sdim}_1(Y)\geq \mathrm{sdim}_1(Z).\]
\end{theorem}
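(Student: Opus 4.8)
The plan is to localize at a single well-chosen point of the fiber and apply Theorem~\ref{well known ineaqulity}.

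First, apply Lemma~\ref{certain point} to the superscheme $X_y$ — which is of finite type over the field $K(y)$ — and to the component $Z$. This produces a point $z\in Z$ with $\mathrm{sdim}_1(Z)\le\mathrm{Ksdim}_1(\mathcal{O}_{z,X_y})$. By Lemma~\ref{fiber as a space} the space $X_y^e$ is $f^{-1}(y)$, so $z$ is simultaneously a point of $X^e$ with $f(z)=y$; hence $\mathcal{O}_{y,Y}\to\mathcal{O}_{z,X}$ is a local homomorphism of local $K$-superalgebras, it is flat because $f$ is flat, and $\mathcal{O}_{y,Y}$ is regular because $y$ is nonsingular. Since $z$ lies over $y$, forming the stalk of $X_y=X\times_Y\mathrm{SSpec}(K(y))$ at $z$ only amounts to killing the maximal superideal $\mathfrak{M}_y$ of $\mathcal{O}_{y,Y}$: there is a canonical isomorphism $\mathcal{O}_{z,X_y}\simeq\mathcal{O}_{z,X}\otimes_{\mathcal{O}_{y,Y}}K(y)\simeq\mathcal{O}_{z,X}/\mathfrak{M}_y\mathcal{O}_{z,X}$, and the last ring, being a quotient of the local ring $\mathcal{O}_{z,X}$, is itself local, so no further localization intervenes.

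Now Theorem~\ref{well known ineaqulity} applied to the flat local morphism $\mathcal{O}_{y,Y}\to\mathcal{O}_{z,X}$ with regular source gives
\[\mathrm{Ksdim}_1(\mathcal{O}_{z,X})\ \ge\ \mathrm{Ksdim}_1(\mathcal{O}_{y,Y})+\mathrm{Ksdim}_1(\mathcal{O}_{z,X}/\mathfrak{M}_y\mathcal{O}_{z,X}),\]
that is $\mathrm{Ksdim}_1(\mathcal{O}_{z,X_y})\le\mathrm{Ksdim}_1(\mathcal{O}_{z,X})-\mathrm{Ksdim}_1(\mathcal{O}_{y,Y})$, whence $\mathrm{sdim}_1(Z)\le\mathrm{Ksdim}_1(\mathcal{O}_{z,X})-\mathrm{Ksdim}_1(\mathcal{O}_{y,Y})$. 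It remains to replace the local odd super-dimensions by the global ones, for which I would prove the following localization-invariance statement: if $W$ is an irreducible superscheme of finite type and $w\in W^e$ is arbitrary, then $\mathrm{Ksdim}_1(\mathcal{O}_{w,W})=\mathrm{sdim}_1(W)$. Indeed, irreducibility forces $(\mathcal{O}_W(V))_0$ and each of its localizations to have a unique minimal prime and to be catenary and equidimensional, so the index set $P(-)$ of Corollary~\ref{another def of odd dim} reduces to the generic point $\eta$; combining that corollary with Lemma~\ref{over_Artinian} gives $\mathrm{sdim}_1(W)=\mathrm{Ksdim}_1(\mathcal{O}_{\eta,W})$, and running the same computation inside $\mathcal{O}_{w,W}$ (whose generic point is again $\eta$) gives $\mathrm{Ksdim}_1(\mathcal{O}_{w,W})=\mathrm{Ksdim}_1(\mathcal{O}_{\eta,W})$. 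Applying this to $(X,z)$ and to $(Y,y)$ turns the displayed inequality into $\mathrm{sdim}_1(Z)\le\mathrm{sdim}_1(X)-\mathrm{sdim}_1(Y)$.

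The routine part is the bookkeeping that threads together Lemma~\ref{certain point}, Theorem~\ref{well known ineaqulity} and Corollary~\ref{another def of odd dim}. The two points requiring genuine care — and the main obstacle to write cleanly — are the base-change identification $\mathcal{O}_{z,X_y}\simeq\mathcal{O}_{z,X}/\mathfrak{M}_y\mathcal{O}_{z,X}$, where one must exploit that $z$ lies over $y$ so that the stalk of the fiber is computed without any extra localization, and the localization-invariance of $\mathrm{Ksdim}_1$ on irreducible superschemes, which hinges on the commutative-algebra fact that localizations of finitely generated $K$-algebras with irreducible spectrum are catenary and equidimensional (this is exactly what collapses $P(-)$ to a single point).
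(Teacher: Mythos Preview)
Your proof is correct and follows the same strategy as the paper: pick the point $z$ furnished by Lemma~\ref{certain point}, apply Theorem~\ref{well known ineaqulity} to the flat local morphism $\mathcal{O}_{y,Y}\to\mathcal{O}_{z,X}$ after identifying $\mathcal{O}_{z,X_y}\simeq\mathcal{O}_{z,X}/\mathfrak{M}_y\mathcal{O}_{z,X}$, and then pass from local to global odd super-dimension using the irreducibility of $X$ and $Y$. The only difference is cosmetic: where the paper cites \cite{zubmas}, Remark~6.2, for the identity $\mathrm{Ksdim}_1(\mathcal{O}_{w,W})=\mathrm{sdim}_1(W)$ on an irreducible $W$, you supply an inline argument via Corollary~\ref{another def of odd dim} (and note that a unique minimal prime already forces $P(-)$ to collapse to the generic point, so the catenary/equidimensional hypotheses you invoke are not actually needed).
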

\begin{proof}
Without loss of a generality one can assume that $X$ and $Y$ are affine, say $X=\mathrm{SSpec}(B)$ and $Y=\mathrm{SSpec}(A)$ (argue as in Lemma 2.1, \cite{maszub2}). Then $f$ is dual to a (flat) superalgebra morphism $A\to B$, the point $y$ corresponds to a prime superideal $\mathfrak{P}$ of $A$ and each point $x$ from $X_y^e$ corresponds to some prime superideal $\mathfrak{Q}$ of $B$ such that $A\cap \mathfrak{Q}=\mathfrak{P}$.	Moreover, the fiber $X_y\simeq \mathrm{SSpec}(B_{\mathfrak{P}}/(B\mathfrak{P})_{\mathfrak{P}})$ is also affine.

Note that Theorem \ref{well known ineaqulity} can be interpreted as follows. For any $x\in X_y^e$ there holds
\[\mathrm{sdim}_1(\mathcal{O}_{x, X})\geq \mathrm{sdim}_1(\mathcal{O}_{y, Y})+ \mathrm{sdim}_1(\mathcal{O}_{x, X_y}).\] 
In fact, we have \[\mathcal{O}_{x, X}\simeq B_{\mathfrak{Q}}, \mathcal{O}_{y, Y}\simeq A_{\mathfrak{P}}\] and 
\[\mathcal{O}_{x, X_y}\simeq ((B/B\mathfrak{P})_{\mathfrak{P}})_{(\mathfrak{Q}/B\mathfrak{P})_{\mathfrak{P}}}\simeq
B_{\mathfrak{Q}}/\mathfrak{P}B_{\mathfrak{Q}}.\]

Thus, if $x$ is as in Lemma \ref{certain point}, then the equalities $\mathrm{sdim}_1(\mathcal{O}_{x, X})=\mathrm{sdim}_1(X)$ and
$\mathrm{sdim}_1(\mathcal{O}_{y, Y})=\mathrm{sdim}_1(Y)$ imply our statement (see \cite{zubmas}, Remark 6.2). 
\end{proof}	
The following example shows that the inequality in Theorem \ref{well known ineaqulity} can be strict.
\begin{example}
In the example from the third section set $C=K$. Then $A$ is obviously a local Artinian superalgebra. Moreover, $R=A_{\pi}$ is a local Artinian superalgebra and a free $K[y]$-supermodule of the rank $\dim A$ as well, where $y=\Pi 1$. The superalgebra $K[y]$ is obviously regular and local with the maximal superideal $\mathfrak{P}=Ky$. Thus the natural embedding $K[y]\to R$ is flat and local, and we have
\[ \mathrm{Ksdim}_1(R) > \mathrm{Ksdim}_1(K[y])+\mathrm{Ksdim}_1(R/R\mathfrak{P}). \]    
\end{example} 

\end{document}